\newcommand{\T}{\mathbb{T}}
\newcommand{\D}{\mathbb{D}}
\newcommand{\C}{\mathbb{C}}
\newcommand{\R}{\mathbb{R}}
\newcommand{\N}{\mathbb{N}}
\newcommand{\B}{\mathcal{B}}
\newcommand{\A}{A}
\newcommand{\Cont}{\mathcal{C}}
\renewcommand{\Re}{\operatorname{Re}}
\newcommand{\id}{\operatorname{id}}
\newcommand{\cb}{\text{cb}}
\newcommand{\Rat}{\operatorname{Rat}}
\theoremstyle{plain}
\newtheorem{theorem}{Theorem}
\newtheorem{lemma}[theorem]{Lemma}
\newtheorem{corollary}[theorem]{Corollary}
\newtheorem{theoremcite}{Theorem}
\newtheorem{theoremarticle}{Theorem}
\newtheorem{lemmaarticle}[theoremarticle]{Lemma}
\theoremstyle{remark}
\newtheorem*{remark}{Remark}
\theoremstyle{definition}
\newtheorem*{definition}{Definition}
\newtheorem*{definitions}{Definitions}
\newtheorem{condition}{Condition}
\newtheoremstyle{named}{}{}{\itshape}{}{\bfseries}{.}{.5em}{\thmnote{#3 }#1}
\theoremstyle{named}
\newtheorem*{problem}{Problem}
\newtheorem*{question}{Question}
\newcommand{\al}{\alpha}
\newcommand{\de}{\delta}
\newcommand{\si}{\sigma}
\newcommand{\la}{\lambda}
\newcommand{\Om}{\Omega}
\newcommand{\tht}{\theta}
\newcommand{\wt}{\widetilde}
\renewcommand{\phi}{\varphi}
\newcommand{\prt}{\partial}
\newcommand{\ovl}{\overline}
\newcommand{\sm}{\setminus}
\renewcommand{\epsilon}{\varepsilon}
\newcommand{\eps}{\varepsilon}
\begin{document}

\title{Tests for complete $K$-spectral sets}

\author{Michael A. Dritschel}
\address{
  School of Mathematics and Statistics\\
  Herschel Building\\
  University of Newcastle\\
  Newcastle upon Tyne\\
  NE1 7RU\\
  UK}
\email{michael.dritschel@newcastle.ac.uk}
\author{Daniel Est\'evez}
\address{Departamento de Matem\'{a}ticas\\
  Universidad Aut\'onoma de Madrid\\ Cantoblanco 28049 (Madrid)\\
  Spain}
\email{daniel.estevez@uam.es}
\author{Dmitry Yakubovich}
\address{Departamento de Matem\'{a}ticas\\
  Universidad Aut\'onoma de Madrid\\ Cantoblanco 28049 (Madrid)\\ Spain\\
  and Instituto de Ciencias Matem\'{a}ticas (CSIC - UAM - UC3M - UCM)}
\email{dmitry.yakubovich@uam.es}

\thanks{The second author has been supported by a grant of the
  Mathematics Department of the Universidad Aut\'onoma de Madrid and
  the Project MTM2015-66157-C2-1-P of the Ministry of Economy and
  Competitiveness of Spain. The third author has been supported by the
  Project MTM2015-66157-C2-1-P and by the ICMAT Severo Ochoa project
  SEV-2015-0554 of the Ministry of Economy and Competitiveness of
  Spain and the European Regional Development Fund (FEDER).}

\subjclass[2010]{Primary 47A25; Secondary 30H50, 46J15, 47A12, 47A20}
\keywords{spectral sets, test functions; separation of singularities;
  algebra generation; extension; Schur-Agler algebra}


\begin{abstract}
  Let $\Phi$ be a family of functions analytic in some neighborhood of
  a complex domain $\Omega$, and let $T$ be a Hilbert space operator
  whose spectrum is contained in $\overline\Om$.  Our typical result
  shows that under some extra conditions, if the closed unit disc is
  complete $K'$-spectral for $\phi(T)$ for every $\phi\in \Phi$, then
  $\overline\Om$ is complete $K$-spectral for $T$ for some constant
  $K$.  In particular, we prove that under a geometric transversality
  condition, the intersection of finitely many $K'$-spectral sets for
  $T$ is again $K$-spectral for some $K\ge K'$.  These theorems
  generalize and complement results by Mascioni, Stessin, Stampfli,
  Badea-Beckerman-Crouzeix and others.  We also extend to non-convex
  domains a result by Putinar and Sandberg on the existence of a skew
  dilation of $T$ to a normal operator with spectrum in
  $\partial\Omega$.  As a key tool, we use the results from our
  previous paper~\cite{article1} on traces of analytic uniform
  algebras.
\end{abstract}

\maketitle

\section{Introduction}

Let $T$ be an operator on a Hilbert space $H$ and $\Om$ a bounded
subset of $\C$ containing the spectrum $\si(T)$.  We recall that,
given a constant $K\ge 1$, the closure $\ovl\Om$ of $\Om$ is said to
be \textit{a complete $K$-spectral set for $T$} if the matrix von
Neumann inequality
\begin{equation}
  \label{eq:complete-k-spectral}
  \|p(T)\|_{\B(H\otimes \C^s)}
  \le
  K\, \max_{z\in \ovl\Om}
  \|p(z)\|_{\B(\C^s)}
\end{equation}
holds for any square $s\times s$ rational matrix function $p(z)$
of any size $s$ and with poles off of $\overline{\Om}$; here
$\B(H)$ denotes the space of linear operators on $H$.
The set
$\overline{\Omega}$ is called a \emph{$K$-spectral set for $T$} if
(\ref{eq:complete-k-spectral}) holds for $s = 1$.
By a well-known theorem of Arveson~\cite{Arveson}, $\ovl\Om$ is a
complete $K$-spectral set for $T$ for some $K\ge 1$ if and only if
$T$ is similar to an operator, which has a normal dilation $N$
with $\si(N)\subset \partial \Om$; the importance of complete
$K$-spectral sets is due to this result.

We denote by $\widehat{\C}$ the Riemann sphere $\widehat{\C} = \C \cup
\{\infty\}$.  By a \emph{Jordan domain} in $\widehat{\C}$ we mean an
open domain $\Omega\subset\widehat{\C}$ whose boundary is a Jordan
curve.  A Jordan domain (or Jordan domain in $\C$) is just a bounded
Jordan domain in $\widehat{\C}$.  A curve $\Gamma \subset \C$ is
called \emph{Ahlfors regular} if $|B(z,\varepsilon) \cap \Gamma| \leq
C\varepsilon$, for every $\varepsilon > 0$ and every $z \in \Gamma$,
where $C$ is a constant independent of $\varepsilon$ and $z$.  Here
$|\cdot|$ denotes the arc-length measure and $B(z,\varepsilon)$ is the
open disk of radius $\varepsilon$ and center $z$.

By a \emph{circular sector with vertex} $z_0$ we mean a set in $\C$ of
the form
\begin{equation*}
  \{z\in\C : 0<|z-z_0|<r,\ \alpha < \arg z < \beta \},
\end{equation*}
where $r > 0$ and $\alpha,\beta\in\R$, $0 < \beta - \alpha < 2\pi$.
The aperture of such a circular sector is the number $\beta - \alpha$.

If $\Omega_1,\Omega_2 \subset \widehat{\C}$ are two open sets, and
$\infty \neq z_0 \in \partial\Omega_1\cap \partial\Omega_2$ is a point
in the intersection of their boundaries, we say that the boundaries of
$\Omega_1$ and $\Omega_2$ \textit{intersect transversally at} $z_0$ if
one can find five pairwise disjoint circular sectors
$S_0,S_1^l,S_1^r,S_2^l,S_2^r$ with vertex $z_0$, having the same
aperture, and such that the following conditions are satisfied:
\begin{itemize}
\item $S_0$ does not intersect $\overline{\Omega}_1 \cup
  \overline{\Omega}_2$.
\item $B(z_0,\varepsilon) \cap \partial\Omega_j \subset S_j^l\cup
  S_j^r\cup\{z_0\}$ for $j=1,2$ and some $\varepsilon > 0$.
\item For every $\delta > 0$, $B(z_0,\delta)\cap\Omega_1\cap\Omega_2$
  is not empty.
\end{itemize}
In the case when $\infty \in \partial\Omega_1\cap\partial\Omega_2$, we
say that the boundaries of $\Omega_1$ and $\Omega_2$ intersect
transversally at $\infty$ if the boundaries of $\psi(\Omega_1)$ and
$\psi(\Omega_2)$ intersect transversally at $0$, where $\psi(z) =
1/z$.  We say that the boundaries of $\Omega_1$ and $\Omega_2$
intersect transversally if they intersect transversally at every point
of $\partial\Omega_1\cap\partial\Omega_2$.  Note that the third
condition in the definition of a transversal intersection implies that
$\overline{\Omega_1 \cap \Omega_2} = \overline{\Omega}_1 \cap
\overline{\Omega}_2$.

By an analytic arc in $\C$ we mean an image of the interval $[0,1]$
under a function, analytic in its neighborhood. A piecewise analytic
curve will mean a curve which can be subdivided into finitely many
analytic arcs.

We can now state some of the main results of the paper.

\begin{theorem}
  \label{Hav-Nersess-opers}
  Let $\Omega_1, \dots, \Omega_n$ be open sets in $\widehat{\C}$ such
  that the boundary of each set $\Omega_k$, $k = 1,\ldots,n$, is a
  finite disjoint union of Jordan curves.  We also assume that the
  boundaries of the sets $\Omega_k$, $k=1,\ldots,n$, are Ahlfors
  regular and rectifiable, and intersect transversally.  Put $\Omega =
  \Omega_1 \cap\dots \cap \Omega_n$.  Suppose that $T \in \B(H)$, and
  $\sigma(T) \subset \overline{\Omega}$.  There is a constant $K'$
  such that
  \begin{enumerate}
  \item[$(i)$] if each of the sets $\overline{\Omega}_j$ , $j =
    1,\ldots, n$, is $K$-spectral for $T$, then $\overline{\Omega}$ is
    also $K'$-spectral set for $T$; and
  \item[$(ii)$] if each of the sets $\overline{\Omega}_j$ , $j =
    1,\ldots,n$, is complete $K$-spectral for $T$, then
    $\overline{\Omega}$ is a complete $K'$-spectral set for $T$.
  \end{enumerate}
  In both cases, $K'$ depends only on the sets $\Omega_1, \dots,
  \Omega_n$ and the constant $K$, but not on the operator $T$.
\end{theorem}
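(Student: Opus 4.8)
The plan is to deduce both parts from a \emph{quantitative separation of singularities} for the algebras of functions holomorphic near the sets $\ovl\Om_j$, and then to combine it with the von Neumann inequalities supplied by the hypotheses via the Riesz--Dunford calculus. As noted after the definition of transversal intersection, $\ovl\Om=\ovl\Om_1\cap\dots\cap\ovl\Om_n$, so $\si(T)\subset\ovl\Om\subset\ovl\Om_j$ for every $j$; this is what lets us use the spectrality of each $\ovl\Om_j$. The separation statement I would establish is: there is a constant $C=C(\Om_1,\dots,\Om_n)$, independent of $T$, and linear maps $f\mapsto f_j$ ($j=1,\dots,n$) sending a function $f$ holomorphic on a neighbourhood of $\ovl\Om$ to a function $f_j$ holomorphic on a neighbourhood of $\ovl\Om_j$, with $\sum_{j=1}^n f_j=f$ near $\ovl\Om$ and $\|f_j\|_{\ovl\Om_j}\le C\|f\|_{\ovl\Om}$; moreover these maps are completely bounded, with cb-norm $\le C$, when applied entrywise to matrix-valued $f$.

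Granting this, let $p$ be an $s\times s$ rational matrix function with poles off $\ovl\Om$, hence holomorphic near $\ovl\Om\supset\si(T)$; decompose $p=p_1+\dots+p_n$ as above, each $p_j$ holomorphic near $\ovl\Om_j\supset\si(T)$. By Runge's theorem (using that $\widehat\C\sm\ovl\Om_j$ has finitely many components, since $\prt\Om_j$ is a finite union of Jordan curves) one approximates $p_j$ uniformly on a neighbourhood of $\ovl\Om_j$ by rational matrix functions with poles off $\ovl\Om_j$; combined with the $K$-spectrality (resp.\ complete $K$-spectrality) of $\ovl\Om_j$ and the Riesz--Dunford calculus, this gives $\|p_j(T)\|\le K\|p_j\|_{\ovl\Om_j}$ and, since $\sum_j p_j=p$ near $\si(T)$, $p(T)=\sum_j p_j(T)$. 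Hence $\|p(T)\|\le\sum_{j=1}^n\|p_j(T)\|\le\sum_{j=1}^n K\|p_j\|_{\ovl\Om_j}\le nKC\max_{z\in\ovl\Om}\|p(z)\|$, so $\ovl\Om$ is (complete) $K'$-spectral for $T$ with $K'=nKC$, depending only on $\Om_1,\dots,\Om_n$ and $K$. The scalar case $s=1$ gives $(i)$ and the matrix case gives $(ii)$.

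It remains to construct the decomposition. Starting from the Cauchy formula $f(z)=\frac1{2\pi i}\int_{\prt\Om}\frac{f(\zeta)}{\zeta-z}\,d\zeta$ for $z\in\Om$ together with the inclusion $\prt\Om\subset\bigcup_j\prt\Om_j$, one partitions $\prt\Om$ — up to the finitely many transversal crossing points $\prt\Om_i\cap\prt\Om_j$, which have zero arclength — into rectifiable arcs $F_j\subset\prt\Om_j$, pushes each $F_j$ a small fixed distance off $\ovl\Om_j$ while keeping it inside the neighbourhood of $\ovl\Om$ on which $f$ is defined (possible because $F_j\subset\prt\Om\subset\ovl\Om$ lies well inside that neighbourhood), and near each crossing point $z_0$ closes up the displaced contour through the sector $S_0$ of the transversality definition, which meets neither $\ovl\Om_i$ nor $\ovl\Om_j$. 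Taking $f_j$ to be the Cauchy integral of $f$ over the displaced arc $\tilde F_j$, plus the correction integrals over the short joining arcs (each absorbed into one of the two relevant $f_j$), one obtains functions holomorphic on a fixed neighbourhood of $\ovl\Om_j$ that sum to $f$ on $\Om$, hence near $\ovl\Om$.

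The crux — and the step I expect to be the main obstacle — is the uniform bound $\|f_j\|_{\ovl\Om_j}\le C\|f\|_{\ovl\Om}$. Away from the crossing points each $\tilde F_j$ stays a fixed distance from $\ovl\Om_j$, so rectifiability makes the estimate elementary; but near a crossing point $z_0$ a point $z\in\ovl\Om_j$ can be arbitrarily close to the endpoint of $\tilde F_j$, and a crude estimate of the Cauchy integral only yields a logarithmically divergent (or $BMO$-type) bound. Controlling this is exactly the role of the geometric hypotheses: Ahlfors regularity and rectifiability give the $L^2$-boundedness of the Cauchy singular integral and the finiteness of all relevant lengths, while the transversal-crossing condition — five pairwise disjoint circular sectors of equal aperture — is tuned precisely so that the local model at $z_0$ admits a genuinely uniform estimate. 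This is the content of the results on traces of analytic uniform algebras in \cite{article1}, where the statement takes the form that $A(\ovl\Om)$ equals, with equivalence of norms and completely boundedly, the sum of the trace algebras $A(\ovl\Om_j)\big|_{\ovl\Om}$. Complete boundedness for matrix-valued $f$ then comes for free: each map $f\mapsto f_j$ is realized by Cauchy-type integral operators $g\mapsto\int g(\zeta)\,k(\cdot,\zeta)\,d\mu(\zeta)$ with scalar kernel, and such operators act entrywise on matrices with cb-norm equal to their norm.
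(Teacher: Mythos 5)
Your overall strategy coincides with the paper's: a bounded linear separation of singularities $f\mapsto f_j$ with $\sum_j f_j=f$, summation of the functional calculi over the $\ovl\Om_j$, and complete boundedness for free because each $f\mapsto f_j$ has commutative range (the paper's Lemma~\ref{lemma-cb}). But your concrete construction of the decomposition has a genuine gap, precisely at the step you flag as the crux, and the fix you propose is not available. If you push the arc $F_j\subset\prt\Om_j\cap\prt\Om$ outward off $\ovl\Om_j$, the displaced contour $\tilde F_j$ lies in $\widehat\C\sm\ovl\Om_j\subset\widehat\C\sm\ovl\Om$, and there $\|f\|_{\ovl\Om}$ gives no control on $|f|$: for a rational $f$ with a pole at distance $\eps$ from $\ovl\Om$ you are forced to take the displacement smaller than $\eps$, and $f=\bigl(\eps/(z-w)\bigr)^m$ shows the sup of $f$ on $\tilde F_j$ can exceed $\|f\|_{\ovl\Om}$ by an arbitrarily large factor; so no constant $C$ independent of $f$ exists. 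Worse, the target you set yourself is impossible in principle: if every $f$ admitted $f=\sum_j f_j$ with each $f_j$ holomorphic on a fixed neighborhood of $\ovl\Om_j$ and $\|f_j\|\le C\|f\|_{\ovl\Om}$, then a normal-families argument would force every $f\in A(\ovl\Om)$ to continue analytically across each non-crossing point of $\prt\Om$, which is false. The correct decomposition (Havin--Nersessian, \cite{HavinNersessian}*{Example 4.1}, which is what the paper invokes --- not \cite{article1}, whose results enter only in Theorems~\ref{main} and~\ref{main2}) takes $f_j$ to be the \emph{undisplaced} Cauchy integral of $f|\prt\Om$ over $F_j$, corrected by explicit logarithmic terms at the crossing points; transversality and Ahlfors regularity make these corrections uniformly bounded, but the resulting $G_j(f)$ lies only in $A(\ovl\Om_j)$ and does not extend across $\prt\Om_j$.

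This forces two further changes to your second step. Since $G_j(f)$ is not holomorphic on a neighborhood of $\ovl\Om_j$, Runge plus the Riesz--Dunford calculus is unavailable; instead one uses the continuous $A(\ovl\Om_j)$-calculus, which exists exactly because $\ovl\Om_j$ is a complete $K$-spectral set for $T$ and $\Rat(\ovl\Om_j)$ is dense in $A(\ovl\Om_j)$ (finite connectivity). And the identity $f(T)=\sum_j (G_jf)(T)$ is then no longer automatic, because the $G_jf$ are not rational: the paper proves it (for $n=2$) by also splitting $f=f_1+f_2$ into rational pieces via partial fractions, observing that $f_1-G_1f=G_2f-f_2$ defines an element of $A(\ovl\Om_1\cup\ovl\Om_2)\otimes M_s$, and approximating it uniformly by rational functions to transfer the identity to the operators. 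With these repairs (and induction on $n$, noting that $\Om_1\cap\Om_2,\Om_3,\dots,\Om_n$ again satisfy the hypotheses), your argument becomes the paper's proof.
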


As will be seen from the proof, the Ahlfors regularity condition can
be weakened, by requiring that it hold only in some neighborhoods of
the intersection points of the boundary curves $\partial\Omega_j$.

The results of Theorem~\ref{Hav-Nersess-opers} can be viewed as a
generalization of the so called surgery of $K$-spectral sets.  The
articles \cites{Lewis, Stampfli, Stampfli2} are devoted to this topic.
In the case when the sets that one is dealing with are Jordan domains
and their boundaries intersect transversally, the results of these
articles can be obtained as a particular case of
Theorem~\ref{Hav-Nersess-opers}.

In \cite{BadeaBeckermannCrouzeix}, Badea, Beckermann and Crouzeix
prove that the intersection of complete spectral sets which are disks
on the Riemann sphere is a complete $K'$-spectral set (see
Theorem~\ref{badea} below for the precise statement).
Theorem~\ref{Hav-Nersess-opers} is a generalization of this result in
two ways.  Firstly, it allows for the sets $\overline{\Omega}_j$ to be
complete $K$-spectral sets instead of complete spectral sets.
Secondly, it allows for the sets ${\Omega}_j$ to be open sets with
some conditions on the boundary, rather than just that they be disks.
The points of \cite{BadeaBeckermannCrouzeix} that are not covered by
Theorem~\ref{Hav-Nersess-opers} is that there they do not need
transversality and obtain a value of $K$ which is an explicit
universal constant depending only on the number of disks.
Unfortunately, in our results we do not have an explicit control on
the constant $K$.

The particular case $K=1$ is important. Let us say that a domain
$\Omega$ has the rational dilation property if whenever
$\overline{\Omega}$ is a $1$-spectral set for $T$, then
$\overline{\Omega}$ is also a complete $1$-spectral set for $T$.  It
follows from the fact that every contraction has a unitary dilation
that $\D$ has the rational dilation property.  In \cite{Agler85},
Agler proved that every annulus has the rational dilation property.
In general, a domain $\Omega$ with two or more holes does not have the
rational dilation property.  Dritschel and McCullough in
\cite{DritschelMcCullough05} and Agler, Harland and Raphael in
\cite{AHR} found independently examples of domains with two holes
which do not have the rational dilation property.  See also the
article \cite{Pickering} by Pickering, where he shows that no
symmetric domain with two or more holes has the rational dilation
property.

In the next theorem, we deal with open sets satisfying a certain
regularity condition.  If $\Omega \subset \widehat{\C}$ is an open set
with $\infty\notin\partial\Omega$ and $R>0$, we say that $\Omega$
satisfies the \emph{exterior disk condition with radius $R$} if for
every $\lambda \in \partial \Omega$ there is $\mu \in \C$ such that
the open disk $B(\mu,R)$ \emph{touches $\Omega$ at $\lambda$}; that is
$|\lambda-\mu|=R$ and $B(\mu,R) \cap \Omega = \emptyset$.

In order to simplify the geometrical arguments, we will also assume
that $\Omega$ satisfies the following technical condition.

\begin{condition}
  \label{condA}
  There exists a finite collection of closed arcs
  $\{\gamma_k\}_{k=1}^N \subset \partial\Omega$ which cover $\partial
  \Omega$ and intersect at most in their endpoints,
  radii $R_k$, $k=1,\ldots,N$,
  and maps $\mu_k:
  \gamma_k \to \C$, such that for every $\lambda \in \gamma_k$,
  the disk
  $B(\mu_k(\lambda),R_k)$ touches $\Omega$ at $\lambda$ and
  $\bigcap_{\lambda\in \gamma_k} B(\mu_k(\lambda), R_k) \neq \emptyset$.
  We also assume that if $\gamma_k$ and $\gamma_l$ intersect at their
  common endpoint $z_0$, then they do so transversally: that is, there
  are disjoint circular sectors $S_k$ and $S_l$ with vertex $z_0$ such
  that $\gamma_k \subset S_k \cup \{z_0\}$ and $\gamma_l \subset S_l
  \cup \{z_0\}$.
\end{condition}

If $\partial\Omega$ is piecewise $C^2$ and the exterior angles at its
corners are nonzero, then $\Omega$ clearly satisfies
Condition~\ref{condA}.  Moreover, it is possible to prove that if
$\partial\Omega$ is a finite disjoint union of Jordan curves and
$\Omega$ satisfies the exterior disk condition and an interior cone
condition, then $\Omega$ also satisfies Condition~\ref{condA}.  In
particular, the exterior disk condition is formally weaker than
Condition~\ref{condA}.

\begin{theorem}
  \label{Putinar-Sandb-non-conv}
  Let $T$ be a bounded linear operator and $\Omega \subset
  \widehat{\C}$ an open set whose boundary is a finite disjoint union
  of Jordan curves.  Assume that $\infty\notin\partial\Omega$, that
  $\Omega$ satisfies Condition~\ref{condA} and that $\sigma(T)\subset
  \overline\Omega$.  Furthermore, assume that for every $k = 1,\ldots,
  N$ and every $\lambda\in \gamma_k$ we have $\|(T-\mu_k(\lambda)
  I)^{-1}\|\le R_k^{-1}$.  Then $\overline\Omega$ is a complete
  $K$-spectral set for some $K>0$.
\end{theorem}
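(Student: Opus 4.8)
The plan is to reduce Theorem~\ref{Putinar-Sandb-non-conv} to Theorem~\ref{Hav-Nersess-opers}, using the exterior disk condition together with Condition~\ref{condA} to exhibit $\ovl\Om$ as a transversal intersection of finitely many disks (or complements of disks) each of which is a complete $1$-spectral set for $T$. Concretely: fix $k\in\{1,\dots,N\}$. By Condition~\ref{condA} the set $D_k:=\bigcap_{\la\in\gamma_k}B(\mu_k(\la),R_k)$ is a nonempty intersection of disks of a fixed radius $R_k$; I would first check that $D_k$ is itself an open, convex set whose boundary is rectifiable and Ahlfors regular (it is an intersection of round disks, so its boundary consists of circular arcs). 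Writing $\Om_k:=\C\sm\ovl{B(\mu_k(\la),R_k)}$ — more precisely the exterior of the appropriate disk — we have from the touching condition that $\prt\Om_k$ passes through $\la$ and that, locally near $\gamma_k$, $\Om$ lies on the $\la$-containing side. The key geometric claim is that the finitely many sets thus produced (one convex piece per arc $\gamma_k$) have boundaries that intersect transversally, which follows from the transversality clause built into Condition~\ref{condA} at the common endpoints $z_0$ of neighbouring arcs $\gamma_k,\gamma_l$, and that their intersection is exactly $\ovl\Om$.

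The analytic input is the estimate $\|(T-\mu_k(\la)I)^{-1}\|\le R_k^{-1}$ for all $\la\in\gamma_k$. I would translate this into the statement that each disk $B(\mu_k(\la),R_k)$ is a $1$-spectral, hence (by von Neumann's inequality applied after an affine change of variable) a complete $1$-spectral set for $T$: indeed $\|(T-\mu_k(\la)I)^{-1}\|\le R_k^{-1}$ says precisely that $\|(R_k^{-1}(T-\mu_k(\la)I))^{-1}\|\le 1$, i.e.\ the operator $\bigl(\tfrac{T-\mu_k(\la)I}{R_k}\bigr)^{-1}$ is a contraction; combined with $\si(T)\subset\ovl\Om\subset \ovl{B(\mu_k(\la),R_k)}$ this is enough. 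Then the \emph{common} exterior region $\Om_k':=\bigcap_{\la\in\gamma_k}\bigl(\C\sm\ovl{B(\mu_k(\la),R_k)}\bigr)$ is complete $1$-spectral for $T$, because it is an intersection over $\la\in\gamma_k$ of complete $1$-spectral sets and a decreasing limit/intersection of complete $K$-spectral sets with uniform constant is again complete $K$-spectral (here the constant is $1$, and one can reduce to a finite subfamily by compactness of $\gamma_k$, or argue directly that the matricial von Neumann inequality over the intersection is the supremum of those over the pieces). Applying Theorem~\ref{Hav-Nersess-opers}$(ii)$ to the family $\{\Om_k'\}_{k=1}^N$, whose intersection is $\ovl\Om$ and whose boundaries are rectifiable, Ahlfors regular and transversal, yields a constant $K'$ depending only on the geometry with $\ovl\Om$ complete $K'$-spectral for $T$.

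The main obstacle I anticipate is purely geometric bookkeeping: verifying that the sets $\Om_k'$ (exteriors of families of disks touching along $\gamma_k$) genuinely satisfy the hypotheses of Theorem~\ref{Hav-Nersess-opers} — in particular that each $\prt\Om_k'$ is a \emph{finite} disjoint union of Jordan curves, that it is Ahlfors regular near the transversal intersection points (it suffices to check this locally, as remarked after Theorem~\ref{Hav-Nersess-opers}), and above all that $\Om_1'\cap\dots\cap\Om_N'=\Om$ with transversal boundary intersections. The transversality at the endpoints $z_0$ of the arcs $\gamma_k$ is supplied by Condition~\ref{condA}; the remaining possible boundary intersections of $\Om_k'$ and $\Om_l'$ are along the interiors of $\gamma_k,\gamma_l$, where the two exterior-disk boundaries are tangent to $\prt\Om$ from outside and one must check that they still meet $\prt\Om$ (and each other) transversally — this is where one uses that the touching disks have a fixed radius $R_k$ on each arc and that the arcs are genuinely curved, so the requisite disjoint sectors can be found. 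Once this geometric reduction is in place, the operator-theoretic part is immediate from Theorem~\ref{Hav-Nersess-opers}.
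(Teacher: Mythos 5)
There is a genuine gap at the heart of your argument: the claim that the infinite intersection $\Om_k'=\bigcap_{\la\in\gamma_k}\bigl(\C\sm\ovl{B(\mu_k(\la),R_k)}\bigr)$ is complete $1$-spectral for $T$ ``because it is an intersection of complete $1$-spectral sets.'' An intersection of complete $1$-spectral sets is not, in general, complete $1$-spectral --- this is exactly the nontrivial content of Theorem~\ref{badea} (Badea--Beckermann--Crouzeix), where even for \emph{finitely many} disks the resulting constant grows with the number of disks, and of Theorem~\ref{Hav-Nersess-opers} itself. Your fallback, that ``the matricial von Neumann inequality over the intersection is the supremum of those over the pieces,'' fails for the elementary reason that $\max_{\cap_j\Om_j}\|p\|$ is \emph{smaller} than each $\max_{\Om_j}\|p\|$, so the inequalities over the pieces give no bound in terms of the norm over the intersection; nor can you reduce to a finite subfamily by compactness, since the infinite intersection is strictly smaller than every finite sub-intersection. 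So the complete $K$-spectrality of each $\Om_k'$ (or of any comparable set bordered along $\gamma_k$) is precisely the hard step, and your proposal does not supply it. The paper's solution is to fix $z_k\in\bigcap_{\la\in\gamma_k}B(\mu_k(\la),R_k)$ (nonempty by Condition~\ref{condA}), apply the inversion $\phi_k(z)=(z-z_k)^{-1}$ so that each disk exterior becomes a genuine closed disk $D_\la^k$ which is ``good'' for $\phi_k(T)$, enlarge each $D_\la^k$ to its tangent half-plane $\Pi_\la^k$ at $\phi_k(\la)$ (enlargement preserves goodness, Lemma~\ref{lem-disks}), and observe that goodness for all these half-planes means exactly $W(\phi_k(T))\subset G_k=\bigcap_\la\Pi_\la^k$. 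The Delyon--Delyon numerical range theorem then handles the infinite intersection of half-planes, giving that the convex set $G_k$ is complete $K$-spectral for $\phi_k(T)$, hence $\ovl{\Om}_k=\phi_k^{-1}(G_k)$ is complete $K$-spectral for $T$. This use of the numerical range is the missing analytic ingredient; without it (or some substitute) your reduction to Theorem~\ref{Hav-Nersess-opers} does not get off the ground.

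Two smaller points. First, your justification of the $1$-spectrality of the individual pieces has the containment backwards: $B(\mu_k(\la),R_k)\cap\Om=\emptyset$, so $\si(T)$ lies in the \emph{exterior} disk $\{|z-\mu_k(\la)|\ge R_k\}$, and it is that exterior disk (not $\ovl{B(\mu_k(\la),R_k)}$) which is complete $1$-spectral; the conclusion you want is correct but the stated inclusion $\ovl\Om\subset\ovl{B(\mu_k(\la),R_k)}$ is false. Second, the sets produced by this construction need not intersect transversally and their intersection need not equal $\Om$; the paper must enlarge them to domains $\wt\Om_k\supset\Om_k$ that do intersect transversally (using the sector clause of Condition~\ref{condA} at the common endpoints of the arcs), apply Theorem~\ref{Hav-Nersess-opers} to $\wt\Om=\bigcap_k\wt\Om_k$, and only then pass from $\ovl{\wt\Om}$ to $\ovl\Om$ using that $\wt\Om\sm\Om$ is at positive distance from $\Om\supset$ the relevant part of $\si(T)$. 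You flag the transversality issue as bookkeeping, but the final step of descending from the (strictly larger) intersection back to $\ovl\Om$ also needs an argument.
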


It is easy to see that the hypotheses are satisfied (for any $R_k>0$)
if $\Omega$ is a convex Jordan domain and the numerical range of $T$
is contained in $\overline\Omega$.  This case was first proved by
\phantomsection Delyon and Delyon\label{DelyonDelyon} in
\cite{DelyonDelyon}.  Theorem~\ref{Putinar-Sandb-non-conv} will be
deduced from this and from Theorem~\ref{Hav-Nersess-opers}.  Putinar
and Sandberg gave a different proof of the Delyon-Delyon result in
\cite{PutinarSandb} by constructing a so called normal skew-dilation,
and relate the constant in this result with C.~Neumann's
``configuration constant'' of a convex domain $\Omega$, see
\cite{PutinarSandb}, Proposition~1.  These articles consider only
$K$-spectral sets instead of complete $K$-spectral sets. However, the
arguments used both in \cite{DelyonDelyon} and \cite{PutinarSandb}
imply the existence of a normal operator $N$ on a larger Hilbert space
$K \supset H$ and having $\sigma(N) \subset
\partial\Omega$, and a bounded linear map $\Xi: C(\partial\Omega)\to
C(\partial\Omega)$ such that
\begin{equation*}
  \label{eq:skew}
  f(T) = P_H (\Xi(f))(N)|H,\qquad f \in \Rat(\overline{\Omega}).
\end{equation*}

It follows from Lemma~\ref{lemma-cb} below that the map $\Xi$ is
completely bounded (see also Crouzeix~\cite{Crouzeix}).  Therefore,
\eqref{eq:skew} implies that $\overline{\Omega}$ is a complete
$K$-spectral set for $T$, and so that under the assumptions of the
Delyon-Delyon theorem, $T$ is similar to an operator having a normal
dilation to $\partial\Omega$.

It is also known that Theorem~\ref{Putinar-Sandb-non-conv} is valid if
$\Omega$ is the unit disk.  In fact, by results of Sz.-Nagy and Foias,
if the hypotheses hold in this case, then $T$ is a $\rho$-contraction
for some $\rho<\infty$ and hence is similar to a contraction.
Therefore Theorem~\ref{Putinar-Sandb-non-conv} can be considered as a
generalization of both of the above mentioned results.  We refer to
Section~\ref{proofs12} for a further discussion and some consequences
of this result.

We will deduce the first part of Theorem~\ref{Hav-Nersess-opers} from
results of Havin, Nersessian and Cerd\`a, where they give various
geometric conditions on domains $\Omega_1, \dots, \Omega_s$ in $\C$,
guaranteeing that every function $f$ in $H^\infty(\cap_j \Omega_j)$
admits a representation
\begin{equation*}
  f=f_1+f_2+\dots+f_s, \quad f_j\in H^\infty(\Omega_j),\ j=1,2, \dots,
  s.
\end{equation*}
In other words, the domains $\Omega_1, \dots, \Omega_s$ admit a
\emph{separation of singularities}.

The proof of the second part of Theorem~\ref{Hav-Nersess-opers} will
also use Lemma~\ref{lemma-cb}.  This lemma says, basically, that if
the range of a bounded linear map is commutative, then the map is
automatically completely bounded.  The particular maps we will be
considering have commutative ranges, so this lemma will be important
in our proofs.  The arguments by Havin, Nersessian and Cerd\`a will
also be key here, because they allow us to deal with commutative
algebras of functions (to which Lemma~\ref{lemma-cb} can be applied)
instead of noncommutative algebras of operators.

Suppose now that $\Phi$ is a collection of functions mapping into
$\mathbb D$, such that each of them is analytic on (its own)
neighborhood of $\Omega$.  The rest of the article is devoted to
finding sufficient conditions for complete $K$-spectrality of the form
\begin{equation}
  \label{eq:tests-str}
  \begin{aligned}
    &\exists K': \forall \phi\in \Phi \qquad \overline{\D} \; \text{is a
      complete $K'$-spectral set for } \phi(T) \\
    & \implies \; \exists K: \quad \text{$\ovl\Om$ is a complete
      $K$-spectral set for $T$.}
  \end{aligned}
\end{equation}
Here $\D$ stands for the open unit disk.
Notice that for any $\phi\in
\Phi$, $\phi(T)$ is defined by the Cauchy-Riesz functional calculus.
Our conditions concern the set $\Omega$ and the family $\Phi$, but we
do not impose extra conditions on $T$.

In particular, a special case of \eqref{eq:tests-str} is that
\begin{equation}
  \label{eq:tests}
  \forall \phi\in \Phi \;\|\phi(T)\|\le 1 \; \implies \; \exists
  K: \text{$\ovl\Om$ is a complete $K$-spectral set for $T$.}
\end{equation}
In this case, $\Phi$ will be called a test collection (a more precise
definition of this notion will be given in the next section).  As we
will show, many known sufficient conditions for complete
$K$-spectrality are easily formulated in the form \eqref{eq:tests} or
in the form \eqref{eq:tests-str} for specific test collections.
Indeed, Theorems~\ref{Hav-Nersess-opers}
and~\ref{Putinar-Sandb-non-conv} can also be given this form if one
uses appropriate Riemann mappings for the test functions (see
Section~\ref{sec:known-criteria} below).

As we will show, implication \eqref{eq:tests-str} holds when we can
solve the following:

\begin{problem}[Algebra Generation]
  \label{prob:alg-gen-prob}
  Suppose $\Phi$ is a finite family of functions in $A(\ovl{\Om})$,
  the algebra of functions in $C(\ovl{\Om})$ that are analytic on the
  interior of $\ovl{\Om}$.  Find geometric conditions guaranteeing
  that $\Phi$ generates $A(\ovl{\Om})$ as an algebra.
\end{problem}

A solution to this was given in our previous article \cite{article1}
(which also was inspired by the techniques of Havin, Nersessian and
Cerd\`a~\cites{HavinNersessian,HavinNersCerda}).  In fact, we more
generally prove that sometimes it is sufficient to show that the
closed subalgebra of $A(\overline{\Omega})$ generated by $\Phi$ is of
finite codimension.

The article is organized in the following manner.  In
Section~\ref{main-results}, we introduce admissible test functions and
we state the main results of this article concerning test collections.
In Section~\ref{sec:known-criteria}, we interpret known criteria of
complete $K$-spectrality in terms of test collections.
Section~\ref{proofs12} contains the proofs of
Theorems~\ref{Hav-Nersess-opers} and \ref{Putinar-Sandb-non-conv},
which were stated in the Introduction.  We also formulate a question,
related with Theorem~\ref{Hav-Nersess-opers}.  In
Section~\ref{sec:article1}, we will list the results of
\cite{article1} that we will need to prove our theorems.
Section~\ref{lemmas} is devoted to the the auxiliary lemmas that are
needed to prove the main results.  Section~\ref{proofs-main} contains
the proofs of the main theorems.  Finally, in
Section~\ref{weakly-admissible}, we treat weakly admissible test
functions, a larger class of test functions for which we can also
prove some results (see, in particular, Theorem~\ref{weak-main}).

\section{Test collections}
\label{main-results}

\subsection{Preliminaries}

We denote by $M_s$ the $C^*$-algebra of complex $s \times s$ matrices.
If $S$ is a (not necessarily closed) linear subspace of a
$C^*$-algebra $\mathcal A$, we denote by $S \otimes M_s$ the tensor
product equipped with the norm inherited from $\mathcal A \otimes
M_s$, which has a unique $C^*$ norm.  One can view $S \otimes M_s$ as
the space of $s \times s$ matrices with entries in $S$.  The simplest
way to norm this is to represent $\mathcal A$ faithfully as a subspace
of $\B(H)$ and then to take the natural norm of $s \times s$ operator
matrices.  If $B$ is another $C^*$-algebra and $\varphi : S \to B$ is
a linear map, we can form the map $\varphi \otimes \id_s : S \otimes
M_s \to B \otimes M_s$, which amounts to applying $\varphi$ entrywise
to $s\times s$ matrices over $S$.  The completely bounded norm of
$\varphi$ is then defined as
\begin{equation*}
  \|\varphi\|_\cb = \sup_{s\geq 1} \|\varphi \otimes \id_s\|.
\end{equation*}

If a compact set $X\subset \mathbb C$ is a complete $K$-spectral set
for a bounded linear operator $T$ and $\Rat(X)$, the algebra of
rational functions with poles off of $X$, is dense in $A(X)$, then the
functional calculus for $T$ extends continuously to $f \in A(X)$, and
we say that such a $T$ admits a continuous $A(X)$-calculus.  Note that
there are various sorts of geometric conditions on $X$ guaranteeing
that $\Rat(X)$ is dense in $A(X)$ (see, for instance,
\cite{Conway}*{Chapter V, Theorem 19.2} for one such).  In particular,
it suffices for $X$ to be finitely connected (see
\cite{Conway}*{Chapter V, Corollary 19.3}).  In what follows, we only
consider finitely connected domains.

\subsection{Different types of test collections}

Here we give the definitions of the several kinds of test collections
used throughout the paper.  As a convenient notation, for $\lambda \in
\widehat{\C} = \mathbb C \cup \{\infty\}$, define $p_\lambda(z) =
(z-\lambda)^{-1}$ if $\lambda \neq \infty$, and $p_\infty(z) = z$.

Assume that $\Omega \subset \widehat{\C}$ is some finitely connected
set.  A \emph{pole set} for $\Omega$ is a finite set $\Lambda \subset
\widehat{\C}\setminus\overline{\Omega}$ that intersects each connected
component of $\widehat{\C}\setminus\overline{\Omega}$.  If $T \in
\B(H)$ and $\sigma(T) \subset \overline{\Omega}$, the
$\Lambda$-\emph{pole size} of $T$ is defined as
$\max_{\lambda\in\Lambda} \|p_\lambda(T)\|$.  We denote the
\emph{$\Lambda$-pole size} of $T$ by $S_\Lambda(T)$.  In the setting
of this article, $\Omega$ will be an (open or closed) $k$-connected
domain and we usually choose pole sets of minimal cardinality; that
is, having $k$ elements, one in each connected component of
$\widehat{\C}\setminus\overline{\Omega}$.

\goodbreak

\begin{definitions}
  Let $\Phi$ be a collection of functions mapping $\Omega$ into $\D$
  and analytic in neighborhoods of $\Omega$.  Fix a pole set $\Lambda$
  for $\Omega$.  We say that $\Phi$ is a
  \begin{enumerate}
  \item[$(i)$] \textit{uniform test collection over $\Omega$} if the
    implication \eqref{eq:tests} holds, where the constant $K$ depends
    only $\Omega$ and $\Phi$ (and not on $T$);
  \item[$(ii)$] \textit{quasi-uniform test collection over $\Omega$}
    if \eqref{eq:tests} holds, where $K$ depends on $\Omega$, $\Phi$
    and $S_\Lambda(T)$;
  \item[$(iii)$] \textit{non-uniform test collection over $\Omega$} if
    \eqref{eq:tests} holds, where $K$ can depend on $\Omega$, $\Phi$ and
    the operator~$T$;
  \item[$(iv)$] \textit{uniform strong test collection over $\Omega$}
    if \eqref{eq:tests-str} holds, where $K$ depends only on $\Omega$,
    $\Phi$ and $K'$ (but not on $T$);
  \item[$(v)$] \textit{quasi-uniform strong test collection over
      $\Omega$} if \eqref{eq:tests-str} holds, where $K$ depends on
    $\Omega$, $\Phi$, $K'$ and $S_\Lambda(T)$;
  \item[$(vi)$] \textit{non-uniform strong test collection over
      $\Omega$} if \eqref{eq:tests-str} holds, where $K$ depends on
    $\Omega$, $\Phi$, $K'$, and also may depend on $T$.
  \end{enumerate}
\end{definitions}

To summarize, there is the basic notion of a \emph{test collection},
which roughly means that whenever $\varphi(T)$ is a contraction for
every $\varphi$ in the collection, then $T$ has $\overline{\Omega}$ as
a $K$-spectral set.  To this, one can add the adjectives
\emph{uniform}, \emph{quasi-uniform} and \emph{non-uniform}, which
mean respectively that $K$ does not depend on $T$, that $K$ depends
only on $S_\Lambda(T)$, and that that $K$ may depend on $T$.  Finally,
the term \emph{strong} indicates that we can replace the condition
$\|\varphi(T)\| \leq 1$ by the weaker condition that $\overline{\D}$
is a complete $K'$-spectral set for $\phi(T)$ for all $\phi\in\Phi$.

An operator $R$ has $\overline{\D}$ as a complete $1$-spectral set if
and only if $R$ is a contraction.  In this case, $R$ has
$\overline{\D}$ as a complete $K'$-spectral set for all $K'>1$.
Therefore, each strong test collection is a test collection.

Also note that when $\Phi = \{\phi\}$ consists of a single element,
the \emph{strong} part comes for free, since if $\phi(T)$ has
$\overline{\D}$ as a complete $K$-spectral set for some $K$, then
there is some invertible operator $S$ such that
$S\varphi(T)S^{-1}=\varphi(STS^{-1})$ is a contraction, and so we can
reason with $STS^{-1}$ instead of $T$.

In most cases, $\Omega$ will be an open domain or the closure of an
open domain.  Given a domain $\Om$, the notions of a test collection
over $\Om$ and over $\ovl\Om$ might seem very similar, but as we will
see below, the condition that $\sigma(T) \subset \overline{\Omega}$,
as opposed to the stronger condition $\sigma(T) \subset \Omega$,
represents an additional technical challenge in some arguments.

Finally, the notion of a non-uniform test collection over an open set
$\Omega$ is trivial, since if $\sigma(T) \subset \Omega$, then
$\overline{\Omega}$ is a complete $K$-spectral set for $T$, where $K$
depends on $\Omega$ and $T$.  This was first proved for $\Omega=\D$ by
Rota~\cite{Rota}, and follows in general from the Herrero-Voiculescu
theorem (see \cite{Paulsen}*{Theorem 9.13}).

\subsection{Admissible function families}
\label{subsec:admiss-funct-famil}

Let us recall the definition of an admissible function
from~\cite{article1}.

\begin{definition}
  Let $\Omega \subset \C$ be a domain whose boundary is a disjoint
  finite union of piecewise analytic Jordan curves such that the
  interior angles of the ``corners'' of $\partial\Omega$ are in
  $(0,\pi]$.  We will say that an analytic function $\Phi =
  (\varphi_1,\ldots,\varphi_n): \overline{\Omega} \to \overline{\D}^n$
  is \emph{admissible} if $\varphi_k \in \A(\overline{\Omega})$, for
  $k = 1,\ldots,n$, and there is a collection of closed analytic arcs
  $\{J_k\}_{k=1}^n$ of $\partial\Omega$ and a constant $\alpha$,
  $0<\alpha\le 1$, such that the following conditions are satisfied:
  \begin{enumerate}[(a)]
  \item The arcs $J_k$ cover all $\partial\Omega$.
  \item $|\varphi_k| = 1$ in $J_k$, for $k = 1,\ldots,n$.
  \item For each $k = 1,\ldots,n$, there exists an open set $\Omega_k
    \supset \Omega$ such that the interior of $J_k$ relative to
    $\partial \Omega$ is contained in $\Omega_k$, $\varphi_k$ is
    defined in $\overline{\Omega}_k$, $\varphi_k \in
    A(\overline{\Omega}_k)$, and $\varphi_k'$ is of class H\"older
    $\alpha$ in $\Omega_k$, i.e.,
    \begin{equation*}
    |\varphi_k'(\zeta) - \varphi_k'(z)| \leq C |\zeta - z|^\alpha,
    \qquad \zeta,z \in \Omega_k.
    \end{equation*}
  \item If $z_0$ is an endpoint of $J_k$, then there exists an open
    sector $S_k(z_0)$ with vertex on $z_0$ and such that $S_k(z_0)
    \subset \Omega_k$ and $J_k \cap B(z_0,\varepsilon) \subset
    S_k(z_0) \cup \{z_0\}$, for some $\varepsilon > 0$.  Here,
    $B(z_0,\varepsilon)$ denotes the open disk of center $z_0$ and
    radius $\varepsilon$.  If $z_0$ is a common endpoint of both $J_k$
    and $J_l$, $k \neq l$, then we require that $(S_k(z_0) \cap
    S_l(z_0))\setminus\overline{\Omega}$ be nonempty.
  \item $|\varphi_k'| \geq C > 0$ in $J_k$, for $k = 1,\ldots,n$.
  \item For each $k = 1,\ldots,n$, $\varphi_k(\zeta) \neq
    \varphi_k(z)$ if $\zeta \in J_k$ and $z \in \overline{\Omega}$, $z
    \neq \zeta$.
  \end{enumerate}
\end{definition}

We recall from \cite{article1} that there is no loss of generality in
assuming in this definition that the arcs $J_k$ intersect only at
their endpoints.

\begin{figure}
  \begin{center}
    \includegraphics[width=10cm]{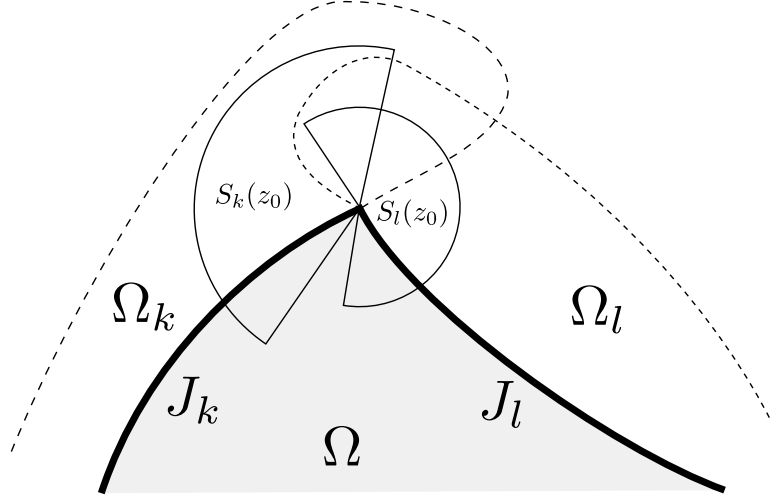}
    \caption{The geometric properties of an admissible function}
    \label{fig:0}
  \end{center}
\end{figure}

Given an admissible function $\Phi = (\varphi_1,\ldots,\varphi_n) :
\overline{\Omega}\to\overline{\D}^n$, we will denote the set of
functions $\{\varphi_1,\ldots,\varphi_n\}$ by the same letter $\Phi$.

\begin{theorem}
  \label{main}
  Assume that $\Phi : \overline{\Omega} \to \overline{\D}^n$ is
  admissible and analytic in an open neighborhood of
  $\overline{\Omega}$, where $\Omega $ is a Jordan domain.  Then
  $\Phi$ is a quasi-uniform strong test collection in
  $\overline{\Omega}$.  If, moreover, $\Phi$ is injective and $\Phi'$
  does not vanish on $\Omega$, then $\Phi$ is a uniform strong test
  collection over $\overline{\Omega}$.
\end{theorem}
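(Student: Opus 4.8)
The plan is to use the Algebra Generation solution from \cite{article1}: since $\Phi=(\varphi_1,\dots,\varphi_n)$ is admissible and analytic on a neighborhood of a Jordan domain $\overline\Omega$, the results of \cite{article1} guarantee that the closed subalgebra of $A(\overline\Omega)$ generated by $\Phi$ has finite codimension (in fact, in the injective, non-vanishing-derivative case, that it is all of $A(\overline\Omega)$). The first step is therefore to translate complete $K'$-spectrality of $\overline{\D}$ for each $\varphi_k(T)$ into a statement about a unital homomorphism. Specifically, if $\overline{\D}$ is complete $K'$-spectral for $\varphi_k(T)$ for every $k$, then each map $g\mapsto g(\varphi_k(T))$ on $\Rat(\overline{\D})$ (equivalently, on the disk algebra) is completely bounded with cb-norm $\le K'$, by definition of complete $K'$-spectrality together with density of $\Rat(\overline{\D})$ in $A(\overline{\D})$.

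Next I would build the functional calculus on the generated subalgebra. Let $\mathcal A_\Phi\subset A(\overline\Omega)$ be the closed unital subalgebra generated by $\varphi_1,\dots,\varphi_n$. Composition with $\varphi_k$ gives an embedding of the disk algebra into $\mathcal A_\Phi$, and the homomorphism $\rho\colon\mathcal A_\Phi\to\B(H)$ sending $\varphi_k\mapsto\varphi_k(T)$ is well defined (it is the restriction of the Cauchy--Riesz calculus, since each $\varphi_k$ is analytic near $\sigma(T)\subset\overline\Omega$ and the calculus is multiplicative). The content is that $\rho$ is bounded: on monomials in the $\varphi_k$ it is controlled because each $\varphi_k(T)$ is similar to a contraction after a common change of basis? — no, the similarities differ with $k$, so instead one must argue directly that $\rho$ is bounded on the generated algebra using that each generator individually has $\overline{\D}$ as a complete $K'$-spectral set. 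Here is where the results of \cite{article1} and Lemma~\ref{lemma-cb} do the work: the generated algebra is commutative, so by Lemma~\ref{lemma-cb} any bounded linear map out of it is automatically completely bounded, and the estimates from \cite{article1} (which presumably give a quantitative bound on $\|\rho\|$ on $\mathcal A_\Phi$ in terms of the cb-norms of the generator calculi, i.e.\ in terms of $K'$) furnish the boundedness. This yields a bounded unital homomorphism $\rho\colon\mathcal A_\Phi\to\B(H)$ with $\|\rho\|_{\cb}\le C(\Omega,\Phi,K')$.

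The third step is to extend $\rho$ from the finite-codimension subalgebra $\mathcal A_\Phi$ to all of $A(\overline\Omega)$. Since $\Rat(\overline\Omega)$ is dense in $A(\overline\Omega)$ for a Jordan domain, it suffices to bound $\|r(T)\|$ for $r\in\Rat(\overline\Omega)$. Pick a pole set $\Lambda$ and the pole functions $p_\lambda$; a standard argument splits a general rational function into a piece in $\mathcal A_\Phi$ plus a finite-dimensional correction built from the finitely many cosets $A(\overline\Omega)/\mathcal A_\Phi$, whose images under the calculus are controlled by $S_\Lambda(T)$. This is exactly why one gets a \emph{quasi-uniform} conclusion in general: the extension constant picks up a dependence on $S_\Lambda(T)$ through the finite-codimensional correction terms. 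When $\Phi$ is injective with non-vanishing derivative on $\Omega$, \cite{article1} gives $\mathcal A_\Phi=A(\overline\Omega)$ outright, the correction step disappears, and the bound depends only on $\Omega,\Phi,K'$ — a \emph{uniform} strong test collection. Finally, commutativity of $\mathcal A_\Phi$ (hence of its image) plus Lemma~\ref{lemma-cb} upgrades the bounded calculus on $A(\overline\Omega)$ to a completely bounded one, which is precisely the assertion that $\overline\Omega$ is complete $K$-spectral for $T$.

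The main obstacle I anticipate is the quantitative control in the second step: showing that $\|\rho\|$ on $\mathcal A_\Phi$ is bounded by a constant depending on $K'$ (and, in the non-uniform-to-quasi-uniform transition, on $\Omega$ and $\Phi$) and not on $T$. The admissibility hypotheses — the Hölder-$\alpha$ derivative, the lower bound $|\varphi_k'|\ge C$, the transversal sector conditions at common endpoints of the arcs $J_k$, and the separation condition (f) — are exactly the ingredients \cite{article1} uses to make the Havin--Nersessian--Cerdà separation-of-singularities machinery produce such a bound; marshalling those citations into a clean estimate, and carefully tracking how the pole size $S_\Lambda(T)$ enters the codimension correction, is the technical heart of the argument.
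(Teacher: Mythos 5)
There is a genuine gap, and it is precisely at the point the theorem is designed to handle: the case $\sigma(T)\cap\partial\Omega\neq\emptyset$. Your second step asserts that the homomorphism $\rho$ on $\mathcal{A}_\Phi$ "is the restriction of the Cauchy--Riesz calculus, since each $\varphi_k$ is analytic near $\sigma(T)$." That justification works for polynomials in the $\varphi_k$, but the elements you actually need to evaluate are of the form $\sum_j\prod_k f_{j,k}\circ\varphi_k$ with $f_{j,k}\in A(\overline{\D})$ only continuous up to $\T$ (these are what the separation operators $F_k$ of Theorem~\ref{article1-thm1} produce). To use the decomposition $f=\sum_k F_k(f)\circ\varphi_k+(\text{remainder})$ you must know that $f(T)=\sum_k [F_k(f)](\varphi_k(T))$, where the right-hand side is defined via the $A(\overline{\D})$-calculus of $\varphi_k(T)$. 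If $\sigma(T)$ touches $\partial\Omega$, then $\sigma(\varphi_k(T))$ may touch $\T$, and no a priori functional calculus identifies the two sides; indeed this is exactly the implication \eqref{eq:C_k0}$\Rightarrow$\eqref{eq:C_k} that Theorem~\ref{main2} must \emph{postulate} as a hypothesis, and which for $\sigma(T)\subset\Omega$ follows from Cauchy--Riesz (giving Corollary~\ref{main-corollary}). Your argument, as written, therefore proves only that $\Phi$ is a strong test collection over $\Omega$, not over $\overline{\Omega}$.

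The paper closes this gap with a limiting argument that your proposal omits entirely: it constructs a shrinking $\{\psi_\varepsilon\}$ of $\Omega$ (Lemma~\ref{lemma-shrinking}), replaces $T$ by $T_\varepsilon=\psi_\varepsilon(T)$ whose spectrum lies in $\Omega$, builds an $\varepsilon$-dependent family of admissible functions $\Phi_\varepsilon=(\eta_k^\varepsilon\circ\varphi_k\circ\psi_\varepsilon^{-1})$ for which $\varphi_k^\varepsilon(T_\varepsilon)=\eta_k^\varepsilon(\varphi_k(T))$ still has $\overline{\D}$ as a complete $K'$-spectral set (Lemma~\ref{lemma-build-shrinking}), runs the separation-of-singularities estimate uniformly in $\varepsilon$ via Lemma~\ref{article1-lemma} and a continuous Fredholm regularization (Lemma~\ref{lemma-regularization}), and finally passes to the limit $\varepsilon\to0$ on a subspace of finite codimension (Lemma~\ref{lemma-lambdas}) before invoking Lemma~\ref{lemma-finite-codim}. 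Your third step (the finite-codimensional correction controlled by $S_\Lambda(T)$, and its disappearance in the injective case via $\mathcal{A}_\Phi=A(\overline{\Omega})$ and Lemma~\ref{lemma-finite-codim-Aphi}) is in the right spirit, but without the shrinking-and-limit mechanism the core identity underlying your step two is unjustified for operators whose spectrum reaches the boundary.
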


This means that if $T \in \B(H)$ satisfies $\sigma(T) \subset
\overline{\Omega}$, $S_\Lambda(T))$ is an arbitrary fixed pole set for
$\Omega$, and $\varphi_k(T)$ have $\overline{\D}$ as a complete
$K'$-spectral set for $k = 1,\ldots,n$, then $T$ has
$\overline{\Omega}$ as a complete $K$-spectral set, with $K =
K(\Omega, \Phi, K', S_\Lambda(T))$.  If $\Phi$ is injective and
$\Phi'$ does not vanish on $\Omega$, then one can even choose $K$
independently of $T$.

\begin{theorem}
  \label{main2}
  Let $\Phi:\overline{\Omega}\to\overline{\D}^n$ be admissible and
  $\Lambda$ an arbitrary fixed pole set for $\Omega$.  Given $T \in
  \B(H)$, assume that there are operators $C_1,\ldots,C_n \in \B(H)$
  such that $\overline{\D}$ is a complete $K'$-spectral set for every
  $C_k$, $k = 1,\ldots,n$.  Furthermore, assume that whenever $f \in
  \Rat(\overline{\Omega})$ can be written as
  \begin{equation}
    \label{eq:C_k0}
    f(z) = \sum_{k=1}^n f_k(\varphi_k(z)), \qquad f_k \in
    A(\overline{\D}),
  \end{equation}
  we have
  \begin{equation}
    \label{eq:C_k}
    f(T) = \sum_{k=1}^n f_k(C_k).
  \end{equation}
  Then $\overline{\Omega}$ is a complete $K$-spectral set for $T$ with
  $K$ depending only on $\Omega$, $\Phi$ and $S_\Lambda(T)$.  If
  moreover, $\Phi$ is injective and $\Phi'$ does not vanish on
  $\Omega$, then one can choose $K$ independently of $T$.
\end{theorem}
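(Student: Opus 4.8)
The plan is to pass from the operator‑valued problem to a scalar (commutative) one, exactly as advertised in the Introduction, and then invoke the algebra‑generation result of \cite{article1} together with Lemma~\ref{lemma-cb}. First I would set up the relevant algebras: let $B \subset A(\overline{\D})$ denote whatever subalgebra the separation‑of‑singularities machinery of \cite{article1} produces, and let $\mathcal{E} \subset A(\overline{\Omega})$ be the (closed) subalgebra of functions $f \in \Rat(\overline{\Omega})$ (or its closure in $A(\overline{\Omega})$) that admit a representation of the form \eqref{eq:C_k0}. The admissibility hypothesis on $\Phi$, via the results to be quoted in Section~\ref{sec:article1}, guarantees that $\mathcal{E}$ has finite codimension in $A(\overline{\Omega})$ — in the injective, nonvanishing case it is all of $A(\overline{\Omega})$ — and moreover that every $f \in \mathcal{E}$ admits such a decomposition with quantitative control: $\|f_k\|_{A(\overline{\D})} \le C(\Omega,\Phi,S_\Lambda(T))\,\|f\|_{A(\overline{\Omega})}$. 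This quantitative separation of singularities is the technical heart imported from \cite{article1}, and I would isolate it as the one black box the argument rests on.

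Next I would define the candidate functional calculus. For $f \in \mathcal{E}$ with a chosen decomposition $f = \sum_k f_k \circ \varphi_k$, the hypothesis \eqref{eq:C_k} forces $f(T) = \sum_k f_k(C_k)$, so the map $u \colon f \mapsto \sum_k f_k(C_k)$ is well defined on $\mathcal{E}$ (its value does not depend on the decomposition precisely because \eqref{eq:C_k} holds for \emph{every} admissible decomposition, including differences of two of them applied to the zero function). Since $\overline{\D}$ is complete $K'$-spectral for each $C_k$, we get $\|f_k(C_k)\|_{\mathcal{B}(H \otimes \C^s)} \le K' \|f_k\|_{A(\overline{\D})\otimes M_s}$ at the matrix level, and summing over the $n$ terms together with the quantitative bound on the $f_k$ yields
\begin{equation*}
  \|u(f)\|_{\mathcal{B}(H)} \le n K' C(\Omega,\Phi,S_\Lambda(T))\, \|f\|_{A(\overline{\Omega})}, \qquad f \in \mathcal{E}.
\end{equation*}
Thus $u$ is a bounded unital homomorphism from $\mathcal{E}$ into $\mathcal{B}(H)$, agreeing with the rational functional calculus of $T$ on $\Rat(\overline{\Omega}) \cap \mathcal{E}$. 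Because $\mathcal{E}$ has finite codimension in $A(\overline{\Omega})$ and contains enough rational functions (in particular $p_\lambda$ for $\lambda$ in the pole set, after adjusting by constants), one extends $u$ to a bounded homomorphism on all of $A(\overline{\Omega})$; the finite‑codimensional extension step is where the pole set $\Lambda$ and hence the dependence on $S_\Lambda(T)$ re‑enters, and it is the step I expect to require the most care, since one must check the extension still agrees with $f \mapsto f(T)$ on all of $\Rat(\overline{\Omega})$ and not merely on the subalgebra. This is where I would lean on the corresponding extension lemma from Section~\ref{sec:article1} / Section~\ref{lemmas}.

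Finally, the range of $u$ is commutative: it is contained in the (closure of the) algebra generated by the mutually commuting operators $\{f_k(C_k)\}$ — here one needs that the $C_k$ commute, or at least that the relevant functional calculi commute, which should follow from \eqref{eq:C_k} applied to products, or be part of the hypotheses I would make explicit. Granting commutativity of the range, Lemma~\ref{lemma-cb} upgrades the bounded homomorphism $u \colon A(\overline{\Omega}) \to \mathcal{B}(H)$ to a \emph{completely} bounded one, with $\|u\|_{\cb}$ controlled by $\|u\|$ and hence by $\Omega, \Phi, K', S_\Lambda(T)$. Since $u$ is the rational functional calculus of $T$ on $\Rat(\overline{\Omega})$, complete boundedness of $u$ is exactly the statement that $\overline{\Omega}$ is a complete $K$-spectral set for $T$ with $K = \|u\|_{\cb}$. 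In the case where $\Phi$ is injective and $\Phi'$ is nonvanishing on $\Omega$, the separation of singularities from \cite{article1} holds on all of $A(\overline{\Omega})$ with constants independent of the pole set, so the whole chain of estimates is $T$‑independent and $K = K(\Omega,\Phi,K')$.
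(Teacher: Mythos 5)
Your overall architecture matches the paper's (the proof is Lemma~\ref{lemma-0}: Theorem~\ref{article1-thm1} gives $L(f)=\sum F_k(f)\circ\varphi_k$ with $I-L$ compact, a right regularization $LR=I+P$ produces the quantitative decomposition on the finite-codimension subspace $\ker P$, the hypothesis \eqref{eq:C_k} converts this into $f(T)=\sum_k[(F_k\otimes\id_s)(Rf)](C_k)$, and Lemma~\ref{lemma-finite-codim} supplies the extension off the finite-codimension subspace, which is exactly where $S_\Lambda(T)$ enters, as you predict). But your endgame for complete boundedness has a genuine gap. You prove only a scalar bound for $u$ and then try to upgrade it by applying Lemma~\ref{lemma-cb} to $u\colon A(\overline{\Omega})\to\B(H)$ on the grounds that its range is commutative. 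Lemma~\ref{lemma-cb} (Smith's lemma) requires the range to lie in a commutative $C^*$-algebra; a commuting family of non-normal operators on $H$ does not generate one, so commutativity of $\{f_k(C_k)\}$ --- even if you had it --- would not suffice. And you do not have it: the hypothesis \eqref{eq:C_k0}$\Rightarrow$\eqref{eq:C_k} constrains only \emph{sums} $\sum f_k\circ\varphi_k$, never products, so ``applying \eqref{eq:C_k} to products'' yields nothing, and nothing in the hypotheses forces the $C_k$ to commute. The correct route --- which your parenthetical ``at the matrix level'' almost reaches before you abandon it --- is the paper's: apply Lemma~\ref{lemma-cb} to the function-valued maps $F_k$ and $R$ (and later $G$ and the functionals $\alpha_k$), whose ranges genuinely sit in the commutative algebras $A(\overline{\D})$ and $A(\overline{\Omega})$, so that $\|(F_k\otimes\id_s)(g)\|\le\|F_k\|\,\|g\|$ for all $s$, and then invoke the assumed \emph{complete} $K'$-spectrality of $\overline{\D}$ for $C_k$ to evaluate the matrix-valued functions $(F_k\otimes\id_s)(g)$ at $C_k$. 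This gives the inequality at every matrix level directly, with no need to upgrade an operator-valued map at the end.

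Two smaller points. The set of functions of the form $\sum f_k\circ\varphi_k$ is a linear subspace, not an algebra, so your $\mathcal{E}$ is not a subalgebra and $u$ is not a homomorphism (nor does it need to be). Relatedly, in the injective, nonvanishing case what \cite{article1} provides (Theorem~\ref{article1-thm2}) is that the generated \emph{algebra} $\mathcal{A}_\Phi$ --- sums of products $\prod_l f_{j,l}\circ\varphi_l$ --- equals $A(\overline{\Omega})$, not that every function is a single sum $\sum f_k\circ\varphi_k$; handling the products is the content of Lemma~\ref{lemma-finite-codim-Aphi} and uses that $C_k=\varphi_k(T)$ once the $A(\overline{\Omega})$-calculus is known to exist. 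Finally, the decomposition constant coming from \cite{article1} is purely function-theoretic and does not depend on $S_\Lambda(T)$; as you correctly say later, that dependence enters only through the finite-dimensional complement spanned by rational functions with poles in $\Lambda$.
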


\emph{A posteriori}, since $\overline{\Omega}$ is a complete
$K$-spectral for $T$, the operators $\varphi_k(T)$ are defined by the
$A(\overline{\Omega})$ calculus for $T$.  The hypotheses of the
theorem imply that $C_k = \varphi_k(T)$, so the operators $C_k$ are
uniquely defined.  However, \emph{a priori}, the operators
$\varphi_k(T)$ are not defined by any reasonable functional calculus,
so the theorem cannot be stated in terms of these operators.

If $\sigma(T) \subset \Omega$, then it is an easy consequence of the
Cauchy-Riesz functional calculus that $C_k = \varphi_k(T)$ satisfy the
hypotheses of this theorem.  Therefore, this proves the following
corollary.

\begin{corollary}
  \label{main-corollary}
  If $\Phi:\overline{\Omega}\to\overline{\D}^n$ is admissible, then
  $\Phi$ is a quasi-uniform strong test-collection over $\Omega$.  If
  moreover $\Phi$ is injective and $\Phi'$ does not vanish on
  $\Omega$, then $\Phi$ is a uniform strong test collection over
  $\Omega$.
\end{corollary}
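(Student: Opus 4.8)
The plan is to obtain this as an immediate consequence of Theorem~\ref{main2}, by feeding into it the operators $C_k := \varphi_k(T)$ defined through the Cauchy--Riesz holomorphic functional calculus. This is legitimate precisely because the corollary concerns a test collection over the \emph{open} set $\Omega$, so that we only ever consider $T \in \B(H)$ with $\sigma(T) \subset \Omega$, and each $\varphi_k$, being in $A(\overline{\Omega})$, is in particular analytic on $\Omega \supset \sigma(T)$. So fix such a $T$ and assume that $\overline{\D}$ is a complete $K'$-spectral set for each $\varphi_k(T)$; this is exactly the first hypothesis of Theorem~\ref{main2} with $C_k = \varphi_k(T)$. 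It then remains to check the compatibility hypothesis of that theorem: if $f \in \Rat(\overline{\Omega})$ admits a representation $f(z) = \sum_{k=1}^n f_k(\varphi_k(z))$ with $f_k \in A(\overline{\D})$, then $f(T) = \sum_{k=1}^n f_k(\varphi_k(T))$.

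The one point that needs a small argument is that each $\varphi_k$ maps $\Omega$ into the \emph{open} disk: since $\varphi_k \in A(\overline{\Omega})$ with $|\varphi_k| \le 1$ and $\varphi_k$ non-constant (condition (e) of admissibility forces $|\varphi_k'| \ge C > 0$ on the arc $J_k$), the maximum modulus principle on the domain $\Omega$ gives $|\varphi_k| < 1$ throughout $\Omega$. Hence $\varphi_k(\sigma(T))$ is a compact subset of $\D$, and $f_k \in A(\overline{\D})$ is analytic on an open neighborhood of it. By the spectral mapping theorem $\sigma(\varphi_k(T)) = \varphi_k(\sigma(T))$ and the composition rule for the holomorphic functional calculus, $f_k \circ \varphi_k$ is analytic near $\sigma(T)$ and $(f_k \circ \varphi_k)(T) = f_k(\varphi_k(T))$. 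Finally, $f$ and $\sum_k f_k \circ \varphi_k$ are analytic functions on the connected open set $\Omega$ that agree there, hence agree on a neighborhood of $\sigma(T)$; by linearity of the Cauchy--Riesz calculus, $f(T) = \sum_{k=1}^n (f_k \circ \varphi_k)(T) = \sum_{k=1}^n f_k(\varphi_k(T))$, which is precisely \eqref{eq:C_k} with $C_k = \varphi_k(T)$.

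Theorem~\ref{main2} then gives that $\overline{\Omega}$ is a complete $K$-spectral set for $T$ with $K$ depending only on $\Omega$, $\Phi$, $K'$ and $S_\Lambda(T)$; that is, implication \eqref{eq:tests-str} holds for every $T$ with $\sigma(T) \subset \Omega$ and with the constant controlled as required, so $\Phi$ is a quasi-uniform strong test collection over $\Omega$. If moreover $\Phi$ is injective and $\Phi'$ does not vanish on $\Omega$, the last sentence of Theorem~\ref{main2} lets us take $K$ independent of $T$, yielding the uniform statement. I do not anticipate any real obstacle here: all the substance is already contained in Theorem~\ref{main2}, and what remains --- the inclusion $\varphi_k(\Omega) \subset \D$ and the consistency of composition with the holomorphic functional calculus --- is routine.
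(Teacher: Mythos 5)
Your proposal is correct and follows exactly the paper's route: the paper derives the corollary from Theorem~\ref{main2} by observing that when $\sigma(T)\subset\Omega$ the operators $C_k=\varphi_k(T)$, defined via the Cauchy--Riesz calculus, satisfy the hypothesis that \eqref{eq:C_k0} implies \eqref{eq:C_k}. The only difference is that you spell out the routine verifications (that $\varphi_k(\Omega)\subset\D$ by the maximum modulus principle, spectral mapping, and the composition rule) which the paper dismisses as ``an easy consequence of the Cauchy-Riesz functional calculus.''
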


\begin{remark}
  The main differences between Theorems~\ref{main} and \ref{main2} is
  that Theorem~\ref{main} assumes that $\Omega$ is simply connected
  and Theorem~\ref{main2} does not.  On the other hand,
  Theorem~\ref{main2} requires the existence of some operators $C_k$
  which behave in an informal sense like $\varphi_k(T)$ (the formal
  condition is that \eqref{eq:C_k0} implies \eqref{eq:C_k}).  As it
  will be clear from the proofs of these theorems, the case when
  $\sigma(T) \subset \Omega$ is easy to handle, while the case when
  $\sigma(T)$ contains part of the boundary of $\Omega$ presents some
  technical difficulties.  Theorems~\ref{main} and \ref{main2}
  represent two different ways of sorting out these difficulties.  In
  Theorem~\ref{main}, we will use the existence of a certain family
  $\{\psi_\eps\}_{0 \leq \eps \le \eps_0}$ of univalent functions on
  $\Om$ to pass from the operator $T$ to operators $\psi_\eps(T)$,
  whose spectra are contained over $\Omega$.  In Theorem~\ref{main2}
  we postulate some kind of functional calculus for $T$.  Ultimately,
  it would be desirable to extend Theorem~\ref{main} to multiply
  connected domains.
\end{remark}

We remark that it follows from the proofs of our theorems that similar
results hold if one replaces complete $K$-spectral sets by (not
necessarily complete) $K$-spectral sets.  For instance, in
Theorem~\ref{main2}, if $C_k$ have $\overline{\D}$ as a $K'$-spectral
set, then $\overline{\Omega}$ is $K$-spectral for $T$.

Theorems~\ref{Hav-Nersess-opers} and \ref{Putinar-Sandb-non-conv},
which were stated in the Introduction, can be reformulated in terms of
test collections.  Theorem~\ref{Hav-Nersess-opers} shows that if
$\varphi_k:\Omega_k \to \D$ are Riemann conformal maps, then
$\{\varphi_1,\ldots,\varphi_s\}$ is a uniform strong test collection
for $\overline{\Omega}$.  In Theorem~\ref{Putinar-Sandb-non-conv}, we
can put $\varphi_{k,\lambda}(z) = R(z-\mu_k(\lambda))^{-1}$.  Then
$\{\varphi_{k,\lambda} : k = 1,\ldots,N,\ \lambda\in\gamma_k\}$ is a
uniform test collection over $\overline{\Omega}$.

In Theorem~\ref{main}, it is easy to see that when $\Phi$ is not
injective or $\Phi'$ has zeros, then $\Phi$ can only be a non-uniform
strong test collection in $\overline{\Omega}$ (i.e., one cannot remove
the adjective ``non-uniform'').  For instance, if $\Phi(z_1) =
\Phi(z_2)$ for distinct points $z_1,z_2 \in \Omega$, then we can take
an operator $T$ acting on $\C^2$ and having $z_1$ and $z_2$ as
eigenvalues, with associated eigenvectors $v_1$ and $v_2$.  For every
$k$, we have $\varphi_k(T) = \varphi_k(z_1)I$, which is a contraction.
If the angle between $v_1$ and $v_2$ is very small, then $\|T\|$ will
be very large, so there is no constant $K$ independent of $T$ such
that $\overline{\Omega}$ is $K$-spectral for $T$.

Similarly, if $\Phi'(z_0) = 0$ for some $z_0 \in \Omega$, we can take
an operator $T$ such that $T \neq z_0I$ and $(T - z_0I)^2 = 0$.  For
every $n \geq 1$, we put $T_n = n(T - z_0I) + z_0I$.  Then it is easy
to check that for every $n$ and every $k$, we have
$\varphi_k(T_n)=\varphi_k(z_0)I$, which is a contraction.  However,
$\|T_n\| \to \infty$ as $n \to \infty$.  This implies that there is no
constant $K$ independent of $n$ such that $\overline{\Omega}$ is
$K$-spectral for $T_n$, for every $n$.

To illustrate the phenomenon described in the last paragraph,
construct a domain $\Omega$ and an admissible function
$\Phi:\overline{\Omega}\to \overline{\D}^3$ such that $\Phi'$ vanishes
at some point $z_0 \in \Omega$.  Choose a small $\varepsilon > 0$ and
put $z_1 = 0$, $z_2 = \varepsilon$, $z_3 = \varepsilon/2 +
i\sqrt{3}\varepsilon/2$, so that $z_1,z_2,z_3$ are the vertices of a
equilateral triangle of side length $\varepsilon$.  Let $z_0$ be the
center of this triangle.

Let $D_j$ be the disk of radius $1$ and center $z_j$.  We put $\Omega
= D_1 \cap D_2 \cap D_3$.  We can divide the boundary of $\Omega$ in
three arcs $J_k$ by putting $J_k = (\partial \Omega)\cap(\partial
D_k)$, for $k = 1,2,3$.  Since $\varepsilon$ is small, it is easy to
see that the length of each arc $J_k$ is close to $2\pi/3$.

Let $\varphi_1(z) = (z-z_0)^2/(1-\overline{z_0}z)^2$.  Then
$\varphi_1$ maps $D_1$ onto $\D$, and it maps $J_1$ bijectively onto
some arc of $\T$.  For $k = 2,3$, let $\eta_k$ be the
orientation-preserving rigid motion taking $z_k$ to $z_1$ and $J_k$ to
$J_1$ (so that it maps $\overline{D}_k$ onto $\overline{D}_1$).  Note
that $\eta_k(z_0) = z_0$.  We define $\varphi_k = \varphi_1 \circ
\eta_k$, for $k = 2,3$.  We see that $\varphi_k'(z_0) = 0$ for $k =
1,2,3$.  It is easy to check that $\Phi =
(\varphi_1,\varphi_2,\varphi_3)$ is admissible in $\overline{\Omega}$,
because, for every $k$, $\varphi_k$ is analytic on a neighborhood of
$\overline{\Omega}$ and takes $J_k$ bijectively onto some arc of $\T$.

It is worthwhile mentioning that the condition in the definition of an
admissible family of functions requiring the interior angle of a
corner of the domain $\Omega$ to be in $(0,\pi]$ can be relaxed in the
results stated above if one instead requires that the corner is not in
the spectrum of the operator $T$ under consideration.  This is seen by
altering $\Omega$, removing the intersection with a small enough disk
about the corner.  The complement of the disk will be a complete
spectral set for $T$ and the new corners created will satisfy the
condition that the interior angles are in $(0,\pi]$.  Since the disk
can be made arbitrarily small, it essentially has no effect on the
statements given above, other than that there is now a dependence on
the choice of $T$ through this additional requirement on the spectrum.

Part of the inspiration for our definition of test collections comes
from \cite{DritschelMcCullough}.  There, such a notion is defined
abstractly as a (possibly infinite) collection of complex valued
functions on a set with the property that at any given point in the
set, the supremum over the test functions evaluated at the point is
strictly less than $1$ and functions separate the points of the set.
In such cases as when the set $X$ is contained in $\mathbb C^n$, the
boundary of $X$ corresponds to points where some test function is
equal to $1$.  A test collection in this context is used to define the
dual notion of admissible kernels, and from these a normed function
algebra is constructed, with the functions in the test collection in
the unit ball of the algebra.  The realization theorem then states
that unital representations of the algebra which send the functions in
the test collection to strict contractions are (completely)
contractive.  In the case that the set where we define the test
collection is a bounded set $\Omega \subset \mathbb C$, this is
reminiscent of the test collection being a uniform test collection.
In the general setting of \cite{DritschelMcCullough}, the algebra
obtained may not be equal to $\A(\overline{\Omega})$, which is the
issue being addressed in this paper.

\section{Some examples of test collections from the literature}
\label{sec:known-criteria}

Here we interpret the known criteria for being a complete $K$-spectral
set in terms of our notion of a test collection and its variants.  For
a good recent review of different aspects of $K$-spectral sets and
complete $K$-spectral sets, the reader is referred
to~\cite{BadeaBech}.

\subsection{Intersection of disks}

A set $D\subset \C$ will be called a \emph{closed disk} in the Riemann
sphere $\widehat{\C}$ if it has of one of the following three forms:
\begin{equation*}
  \{z \in \widehat{\C} : |z - a| \leq r\},\quad
  \{z \in \widehat{\C} : |z - a| \geq r\},\quad \{z \in \widehat{\C} :
  \Re \alpha(z-a) \geq 0\},
\end{equation*}
i.e., it is either the interior of a disk in $\C$, the exterior of a
disk, or a half-plane.

\begin{theoremcite}[Badea, Beckermann, Crouzeix
  \cite{BadeaBeckermannCrouzeix}]
  \label{badea}
  Let $\{D_k\}_{k=1}^n$ be closed disks in $\widehat{\C}$ and
  $\{\varphi_k\}_{k=1}^n$ be fractional linear transformations taking
  $D_k$ onto $\overline{\D}$.  Then $\{\varphi_k\}_{k=1}^n$ is a
  uniform test collection for $\bigcap_{k=1}^n D_k$.
\end{theoremcite}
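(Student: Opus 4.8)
The plan is to reduce the statement directly to Theorem~\ref{badea} by translating the hypothesis $\|\varphi_k(T)\|\le 1$ into the assertion that each $D_k$ is a \emph{complete} spectral set for $T$, and then to observe that the constant produced by the Badea--Beckermann--Crouzeix theorem is universal, hence depends only on the data $\Omega=\bigcap_{k=1}^n D_k$ and $\Phi=\{\varphi_1,\dots,\varphi_n\}$.

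First I would set up the functional-calculus dictionary. Fix $k$. Since $D_k$ is a closed disk on the Riemann sphere and $\varphi_k$ is a linear fractional transformation carrying $D_k$ bijectively onto $\overline{\D}$, the pole of $\varphi_k$ lies in $\widehat{\C}\setminus D_k$; thus $\varphi_k$ is analytic on an open set containing $D_k\supset\sigma(T)$, and $\varphi_k(T)$ is well defined by the Cauchy--Riesz calculus. The substitution $g\mapsto g\circ\varphi_k$ is a bijection of $\Rat(\overline{\D})$ onto $\Rat(D_k)$ and of $A(\overline{\D})$ onto $A(D_k)$; being induced by a linear fractional change of variable, it is isometric for the supremum norm at every matrix level, and it intertwines the two functional calculi: $(g\circ\varphi_k)(T)=g(\varphi_k(T))$. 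Now $\|\varphi_k(T)\|\le 1$ says that $\varphi_k(T)$ is a contraction, so by the Sz.-Nagy dilation theorem $\overline{\D}$ is a complete $1$-spectral set for $\varphi_k(T)$; transporting this along the isometry above shows that $D_k$ is a complete spectral set for $T$.

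Next I would simply invoke Theorem~\ref{badea}: since each of $D_1,\dots,D_n$ is a complete spectral set for $T$ and $\sigma(T)\subset\bigcap_{k=1}^n D_k$, the set $\bigcap_{k=1}^n D_k$ is a complete $K'$-spectral set for $T$, with $K'$ the explicit universal constant of that theorem, depending only on $n$. Since $\bigcap_k D_k$ is already closed, this is precisely the conclusion of \eqref{eq:tests} with $\Omega=\bigcap_k D_k$, and because $K'$ depends only on $n$, hence only on $\Phi$ and $\Omega$, the family $\{\varphi_k\}_{k=1}^n$ is a \emph{uniform} test collection for $\bigcap_{k=1}^n D_k$.

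I do not expect a serious obstacle here: the statement is essentially the Badea--Beckermann--Crouzeix result reformulated in the language of test collections, and routing it instead through Theorem~\ref{Hav-Nersess-opers} would be both unnecessary and weaker, since that theorem requires transversality of the boundaries and yields no explicit constant. The only point requiring a little care is the passage between ``$\varphi_k(T)$ is a contraction'' and ``$D_k$ is a complete spectral set for $T$'', that is, checking that a linear fractional change of variable acts completely isometrically on the relevant matrix-valued rational function algebras and commutes with the functional calculus, with due attention to the point at infinity when $D_k$ is a half-plane or the exterior of a disk.
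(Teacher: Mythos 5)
Your proposal is correct and takes essentially the same route as the paper: Theorem~\ref{badea} appears there only as a \emph{cited} result, with no proof beyond the observation that it is the main theorem of \cite{BadeaBeckermannCrouzeix} (intersections of complete spectral Riemann-sphere disks are complete $K'$-spectral with a universal constant depending only on $n$) rewritten in test-collection language, which is exactly your reduction. The one point to fix is that your appeal to ``Theorem~\ref{badea}'' must be read as an appeal to that original external result rather than to the statement being proved (otherwise the argument is circular); your M\"obius/von Neumann dictionary between $\|\varphi_k(T)\|\le 1$ and ``$D_k$ is a complete spectral set for $T$'' is the same translation the paper records in Lemma~\ref{lem-disks}.
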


\subsection{Nice $n$-holed domains}

We say that an open bounded set $\Omega \subset \widehat{\C}$ is an
$n$-\emph{holed domain} if its boundary $\partial \Omega$ consists of
$n+1$ disjoint Jordan curves.  Given an $n$-holed domain $\Omega$, we
will denote by $\{U_k\}_{k=0}^n$ the connected components of
$\widehat{\C}\setminus \overline{\Omega}$, with $U_0$ the unbounded
component.  Let $X_k = \widehat{\C} \setminus U_k$.

\begin{theoremcite}[Douglas, Paulsen \cite{DouglasPaulsen}]
  \label{douglaspaulsen}
  Let $\Omega$ be an $n$-holed domain, and define $\{X_k\}_{k=0}^n$ as
  above.  Assume that each $X_k$ has an analytic boundary, so that there
  exist analytic homeomorphisms $\varphi_k : X_k \to \overline{\D}$,
  for $k = 0,\ldots,n$.  Then $\{\varphi_k\}_{k=0}^n$ is a uniform
  strong test collection in $\overline{\Omega}$.
\end{theoremcite}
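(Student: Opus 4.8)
The plan is to reduce Theorem~\ref{douglaspaulsen} to an application of Theorem~\ref{main2}, verifying that the ``nice $n$-holed'' geometry furnishes an admissible function $\Phi = (\varphi_0,\ldots,\varphi_n)$ on $\overline{\Omega}$ and that the ``$1$-spectral $\Rightarrow$ functional calculus'' hypothesis~\eqref{eq:C_k0}$\Rightarrow$\eqref{eq:C_k} can be supplied from the hypothesis that each $\overline{\D}$ is a complete $K'$-spectral set for the relevant operator. First I would check admissibility. Write $J_k = \partial X_k = \partial U_k \subset \partial\Omega$, so the arcs $J_k$ are precisely the boundary components of $\Omega$, and they cover $\partial\Omega$ and are pairwise disjoint (so conditions (a) and the corner conditions (d) are vacuous here). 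Since $X_k$ has analytic boundary, $\varphi_k : X_k \to \overline{\D}$ is an analytic homeomorphism, hence extends analytically across $\partial X_k$ to a neighborhood $\Omega_k \supset \overline{\Omega}$, giving a bona fide analytic map on a neighborhood with $|\varphi_k| = 1$ exactly on $J_k$ (condition (b)), $\varphi_k'$ real-analytic and non-vanishing on $J_k$ (conditions (c) and (e), with any $\alpha$, say $\alpha = 1$, on a slightly shrunk $\Omega_k$), and injectivity of $\varphi_k$ on $X_k \supset \overline{\Omega}$ giving condition (f). So $\Phi$ is admissible. Note $\Phi$ need not be injective on $\overline{\Omega}$ and $\Phi'$ may vanish in $\Omega$, which is why the conclusion is only a \emph{strong} test collection, not necessarily uniform in the sense of independence of $S_\Lambda(T)$ — but the $X_k$ each contain exactly one hole $U_k$, so the functions $\varphi_k$ do control the pole sizes; indeed, a pole set $\Lambda$ can be chosen with one point $\lambda_k \in U_k$, and $p_{\lambda_k}$ is a rational function of $\varphi_k$ alone (a Möbius image), so $S_\Lambda(T)$ is itself bounded in terms of $K'$ once $\overline{\D}$ is complete $K'$-spectral for $\varphi_k(T)$. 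This upgrades ``quasi-uniform'' to ``uniform'' in the statement.

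Next I would set up the operators $C_k$ needed for Theorem~\ref{main2}. Since $\overline{\D}$ is a complete $K'$-spectral set for $\varphi_k(T)$ — here $\varphi_k(T)$ makes sense via the Cauchy--Riesz functional calculus because $\varphi_k$ is analytic on a neighborhood of $\sigma(T) \subset \overline{\Omega} \subset \Omega_k$ — we simply put $C_k = \varphi_k(T)$. The remaining task is to verify the implication \eqref{eq:C_k0}$\Rightarrow$\eqref{eq:C_k}: if $f \in \Rat(\overline{\Omega})$ has a representation $f(z) = \sum_k f_k(\varphi_k(z))$ with $f_k \in A(\overline{\D})$, then $f(T) = \sum_k f_k(\varphi_k(T))$. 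Here I would use that $\Rat(\overline{\Omega})$ is dense in $A(\overline{\Omega})$ (finite connectivity), approximate each $f_k$ by rational functions with poles off $\overline{\D}$, and observe that for rational $f_k$ the identity $f_k(\varphi_k(T)) = (f_k\circ\varphi_k)(T)$ is just the composition rule for the Cauchy--Riesz calculus (valid since $f_k\circ\varphi_k$ is analytic near $\sigma(T)$ and $\varphi_k(\sigma(T)) \subset$ a compact subset where $f_k$ is analytic). Then both sides of \eqref{eq:C_k0} are continuous in the appropriate norms — the left in the sup norm over $\overline{\Omega}$ via the (a priori unknown, but soon-to-be-established) calculus, and here is the subtlety: a priori we do not know $f \mapsto f(T)$ is bounded on $A(\overline{\Omega})$, which is exactly what we are trying to prove. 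So one must argue at the level of the \emph{summands}: each term $f_k \circ \varphi_k$ lies in the closed subalgebra generated by $\varphi_k$, on which the map $g(\varphi_k) \mapsto g(\varphi_k(T))$ is continuous because $\overline{\D}$ is complete $K'$-spectral for $\varphi_k(T)$; thus $\sum_k f_k(C_k)$ depends continuously on the data $(f_0,\ldots,f_n)$ and agrees with $f(T)$ on the dense set of rational data, hence always. This is exactly the hypothesis Theorem~\ref{main2} asks for.

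With admissibility and the functional-calculus hypothesis in hand, Theorem~\ref{main2} delivers that $\overline{\Omega}$ is a complete $K$-spectral set for $T$ with $K = K(\Omega, \Phi, S_\Lambda(T))$, and since — as noted above — $S_\Lambda(T)$ is itself controlled by $K'$ and $\Omega$ through the Möbius relation between $p_{\lambda_k}$ and $\varphi_k$, we get $K = K(\Omega, K')$ only, i.e.\ $\{\varphi_k\}_{k=0}^n$ is a \emph{uniform} strong test collection over $\overline{\Omega}$, which is the assertion of Theorem~\ref{douglaspaulsen}.

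The main obstacle I anticipate is precisely the circularity just flagged: checking \eqref{eq:C_k0}$\Rightarrow$\eqref{eq:C_k} \emph{without} presupposing the $A(\overline{\Omega})$-calculus we are trying to construct. The clean way around it is to never phrase continuity on $A(\overline{\Omega})$, but only on the single-generator subalgebras $A(\overline{\D}) \circ \varphi_k \subset A(\overline{\Omega})$, where complete $K'$-spectrality of $\varphi_k(T)$ gives an honest bounded (indeed completely bounded, by Lemma~\ref{lemma-cb}) calculus, and then to pass to the limit termwise in the finite sum $\sum_k$. A secondary technical point is confirming that every boundary component $J_k$ really does get thickened to an analytic map on a full neighborhood $\Omega_k \supset \overline{\Omega}$: this follows from the Schwarz-reflection/analytic-continuation argument for conformal maps across analytic boundary arcs applied to $\varphi_k$, and one should note $\Omega_k$ may be taken to engulf all of $\overline{\Omega}$ because $X_k \supset \overline{\Omega}$ already. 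Everything else — the corner conditions, the Hölder condition on $\varphi_k'$, the separation property (f) — is automatic from analyticity and injectivity of $\varphi_k$ on $X_k$.
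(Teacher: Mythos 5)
First, note that the paper does not prove Theorem~\ref{douglaspaulsen}; it is quoted from Douglas--Paulsen, whose original argument runs through the hypo-Dirichlet algebra decomposition of $A(\overline{\Omega})$ rather than through Theorem~\ref{main2}. So your route is genuinely different, and its skeleton (check that $\Phi=(\varphi_0,\ldots,\varphi_n)$ is admissible, set $C_k=\varphi_k(T)$, invoke Theorem~\ref{main2}) is sound. The admissibility check is essentially correct. But one of your side remarks is wrong in a way that matters: each $\varphi_k$ is an analytic homeomorphism of $X_k\supset\overline{\Omega}$ onto $\overline{\D}$, so $\Phi$ \emph{is} injective on $\overline{\Omega}$ and $\Phi'$ \emph{does not} vanish on $\Omega\subset\operatorname{int}X_k$. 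Hence the last sentence of Theorem~\ref{main2} already gives $K$ independent of $T$, and your detour through bounding $S_\Lambda(T)$ by $K'$ is unnecessary (though, as it happens, also repairable, since $p_{\lambda_k}\circ\varphi_k^{-1}$ extends analytically across $\T$).

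The genuine gap is in your verification of the implication \eqref{eq:C_k0}$\Rightarrow$\eqref{eq:C_k}, precisely in the case $\sigma(T)\cap\partial\Omega\neq\emptyset$ --- which is the only case that makes this a statement about $\overline{\Omega}$ rather than $\Omega$. Your density argument replaces the data $(f_0,\ldots,f_n)$ by rational data $(f_0^{(m)},\ldots,f_n^{(m)})$; for these, $\sum_k f_k^{(m)}(C_k)=f^{(m)}(T)$ with $f^{(m)}=\sum_k f_k^{(m)}\circ\varphi_k$, and the left side does converge termwise to $\sum_k f_k(C_k)$ by $K'$-spectrality. But the right side is \emph{not} a termwise object: you need $f^{(m)}(T)\to f(T)$, and all you control is $f^{(m)}\to f$ uniformly on $\overline{\Omega}$. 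The Cauchy--Riesz calculus is continuous under uniform convergence on a fixed \emph{neighborhood} of $\sigma(T)$; when $\sigma(T)$ touches $\partial\Omega$ the contour cannot stay inside $\overline{\Omega}$, the resolvent blows up near the boundary, and no bound on $\sup_{\overline{\Omega}}|f^{(m)}-f|$ saves you. Restricting continuity to the single-generator subalgebras, as you suggest, controls only the wrong side of the identity. The gap can be closed, but by rigidity rather than density: write $f=\sum_k g_k$ with $g_k$ rational with poles in $U_k$ only, set $h_k=g_k-f_k\circ\varphi_k\in A(X_k)$, and glue $h_k$ on $X_k$ with $-\sum_{j\neq k}h_j$ on $\bigcap_{j\neq k}X_j$ (these two closed sets cover $\widehat{\C}$ and overlap exactly in $\overline{\Omega}$, where the two definitions agree) to get a function analytic on all of $\widehat{\C}$, hence constant. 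Thus $f_k=(g_k-c_k)\circ\varphi_k^{-1}$ is automatically analytic on a neighborhood of $\overline{\D}$, every $f_k(C_k)$ is an honest Cauchy--Riesz object, and \eqref{eq:C_k} follows from the composition rule with no limiting process. With that step supplied, the reduction to Theorem~\ref{main2} goes through and yields the uniform strong conclusion.
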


This theorem can also be found in Paulsen's
book~\cite{Paulsen}*{Chapter 11}.

\subsection{Convex domains and the numerical range}

For $T \in \B(H)$, the \emph{numerical range} is defined as the set
\begin{equation*}
  W(T) = \{\langle Tx, x \rangle : \|x\| = 1\}.
\end{equation*}
It is well-know that this set is convex, and so its closure can be
written as the intersection of a (generally infinite) collection of
closed half planes $\{H_\alpha\}$.  Let $\varphi_\alpha$ be a linear
fractional transformation taking $H_\alpha$ onto $\overline{\D}$.  It
is easy to check that $W(T) \subset H_\alpha$ if and only if
$\|\varphi_\alpha(T)\| \leq 1$.  As we have already commented, it
follows from the arguments in \cite{DelyonDelyon} and
\cite{PutinarSandb} that every compact convex set containing $W(T)$ is
a complete $K$-spectral set for $T$.  This result can be rewritten in
terms of test collections as follows.

\begin{theoremcite}
  \label{numericalrange}
  Let $\Omega$ be a convex domain in $\C$ and let $\{H_\alpha\}$ be a
  collection of closed half-planes such that $\overline{\Omega} =
  \bigcap H_\alpha$.  Let $\varphi_\alpha$ be a fractional linear
  transform taking $H_\alpha$ onto $\overline{\D}$.  Then
  $\{\varphi_\alpha\}$ is a uniform test collection in
  $\overline{\Omega}$.
\end{theoremcite}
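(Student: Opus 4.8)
The plan is to reduce Theorem~\ref{numericalrange} to the Delyon--Delyon theorem as recalled in the discussion around Theorem~\ref{Putinar-Sandb-non-conv}: the entire content of the statement is to translate the hypothesis $\|\varphi_\alpha(T)\|\le 1$ into a containment for the numerical range. First I would record the elementary equivalence asserted just before the statement: for a closed half-plane $H\subset\C$ and a fractional linear map $\varphi$ carrying $H$ onto $\overline{\D}$, one has $\|\varphi(T)\|\le 1$ if and only if $W(T)\subset H$. This is just the bounded form of the Cayley transform: conjugating by an affine bijection one may assume $H=\{z:\Re z\le 0\}$ and $\varphi(z)=(z+1)(z-1)^{-1}$, and then $\|\varphi(T)\|\le 1$ is equivalent to $\Re\langle Tx,x\rangle\le 0$ for all $x$, i.e.\ to $W(T)\subset H$. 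I would also note in passing that each $\varphi_\alpha$ is analytic on a neighborhood of $H_\alpha$ (its pole lies outside $H_\alpha$) and that $\Omega\subset\inter H_\alpha$ because $\Omega$ is open; hence $\varphi_\alpha$ maps $\Omega$ into $\D$ and $\varphi_\alpha(T)$ is well defined by the Cauchy--Riesz calculus whenever $\sigma(T)\subset\overline{\Omega}$.

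Now suppose $\|\varphi_\alpha(T)\|\le 1$ for every $\alpha$. By the equivalence above, $W(T)\subset H_\alpha$ for each $\alpha$, hence $\overline{W(T)}\subset\bigcap_\alpha H_\alpha=\overline{\Omega}$. Since $\Omega$ is a bounded convex domain, $\overline{\Omega}$ is a compact convex set containing $W(T)$, so the Delyon--Delyon theorem applies to it: there is a normal operator $N$ on a larger space with $\sigma(N)\subset\partial\Omega$ and a bounded linear map $\Xi:C(\partial\Omega)\to C(\partial\Omega)$ with $f(T)=P_H(\Xi f)(N)|H$ for $f\in\Rat(\overline{\Omega})$; by Lemma~\ref{lemma-cb} the map $\Xi$ is automatically completely bounded, so $\overline{\Omega}$ is a complete $K$-spectral set for $T$. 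This is the desired conclusion, with $\Phi=\{\varphi_\alpha\}$.

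The only point needing care --- and the step I would flag, though it is minor --- is the uniformity of $K$ in $T$. One must run the Delyon--Delyon estimate on the \emph{fixed} compact convex set $\overline{\Omega}$, not on $\overline{W(T)}$, since the latter moves with $T$; the constant obtained this way (related to C.~Neumann's configuration constant of $\overline{\Omega}$, or bounded by a universal constant) then depends only on $\overline{\Omega}$, hence only on $\Omega$. Enlarging from $\overline{W(T)}$ to $\overline{\Omega}$ costs nothing, since if a compact set $X_1\subset X_2$ is complete $K$-spectral for $T$ then so is $X_2$ (a rational function with poles off $X_2$ has poles off $X_1$, and $\max_{X_1}|p|\le\max_{X_2}|p|$). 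This establishes that $\{\varphi_\alpha\}$ is a uniform test collection over $\overline{\Omega}$.
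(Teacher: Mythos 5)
Your proposal is correct and follows essentially the same route as the paper: the paper's justification (in the paragraph preceding the statement, together with the discussion of \cite{DelyonDelyon} and Lemma~\ref{lemma-cb} in the Introduction) is precisely the equivalence $\|\varphi_\alpha(T)\|\le 1 \iff W(T)\subset H_\alpha$, followed by $W(T)\subset\bigcap_\alpha H_\alpha=\overline{\Omega}$ and an appeal to the Delyon--Delyon theorem, upgraded to complete $K$-spectrality via the automatic complete boundedness of the skew-dilation map $\Xi$. Your extra remarks on running the estimate on the fixed set $\overline{\Omega}$ and on monotonicity of complete $K$-spectral sets under inclusion are sound and consistent with the paper's claim that $K$ depends only on $\Omega$.
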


\begin{remark}
  If $\Omega$ is a smooth bounded convex set, we denote by $C_\Omega$
  the optimal constant $K$ such that $\overline{\Omega}$ is a
  (complete) $K$-spectral set for $T$ whenever $\overline{W(T)}
  \subset \Omega$.  The constant $Q = \sup_\Omega C_\Omega$ is know as
  Crouzeix constant.  Crouzeix has conjectured that $Q=2$.  The best
  result so far is $Q \leq 1 + \sqrt{2}$, as shown by Crouzeix and
  Palencia in their recent preprint~\cite{CrouzeixPalencia}.

  We also mention that in ~\cite{McCarthyPutinar}, a certain analogue
  of the Delyon-Delyon result ~\cite{DelyonDelyon} about a normal
  skew-dilation to the numerical range is given for a (possibly
  non-commuting) tuple of operators in the context of the symmetrized
  functional calculus.
\end{remark}

\subsection{$\rho$-contractions}

If $\rho > 0$, we say that an operator $T \in \B(H)$ is a
$\rho$-contraction if $T$ has an unitary $\rho$-dilation.  This is a
unitary operator $U$ acting on a larger Hilbert space $K \supset H$
and such that
\begin{equation*}
  T^n = \rho P_H U^n |H, \qquad n \geq 1.
\end{equation*}
Alternatively, one can ask that $\sigma(T) \subset \overline{\D}$ and
that the operator-valued Poisson kernel of $T$
\begin{equation*}
  K_{r,t}(T) = (I-re^{it}T^*)^{-1} + (I-re^{-it}T)^{-1} - I\qquad 0 <
  r < 1,\  t \in \R
\end{equation*}
satisfies
\begin{equation}
  \label{eq:operator-poisson-rho}
  K_{r,t}(T) + (\rho-1)I \geq 0, 0 < r < 1, t \in \R.
\end{equation}
The class of $\rho$ contractions becomes larger as $\rho$ increases,
as \eqref{eq:operator-poisson-rho} clearly shows, and $\rho=1$
corresponds to the usual contractions, while $\rho=2$ corresponds to
$W(T) \subseteq \mathbb D$.

If $1 < \rho < 2$, then $T$ being a $\rho$-contraction is also
equivalent to the condition that
\begin{equation}
  \label{eq:rho-1-2}
  \|\mu I - T\| \leq |\mu| + 1, \qquad \frac{\rho - 1}{2 - \rho} \leq
  |\mu| < \infty.
\end{equation}
(See, for instance, \cite{NagyFoias}*{Chapter I}.)  If $a \in \T$, we
denote by $D_a(\rho)$ the closed disk of radius $1 + (\rho - 1)/(2 -
\rho)$ whose boundary is tangent to $\T$ at $a$ and which contains
$\overline{\D}$.  Let $\varphi_{a,\rho}$ be a linear fractional
transformation taking $D_a(\rho)$ onto $\overline{\D}$.  Then
\eqref{eq:rho-1-2} is equivalent to the condition that
$\varphi_{a,\rho}(T)$ is a contraction for every $a \in \T$.

Similarly, if $\rho > 2$, $T$ is a $\rho$ contraction if and only if
\begin{equation}
  \label{eq:rho-2}
  \|(\mu I - T)^{-1}\| \leq \frac{1}{|\mu|-1}, \qquad 1 \leq |\mu|
  \leq \frac{\rho-1}{\rho-2}.
\end{equation}
For these values of $\rho$, denote by $D_a(\rho)$ the complement of
the open disk of radius $(\rho-1)/(\rho-2) - 1$, which is tangent to
$\T$ at $a \in \T$ and does not contain $\D$.  Let $\varphi_{a,\rho}$
be a linear fractional transformation which takes $D_a(\rho)$ onto
$\overline{\D}$.  Then \eqref{eq:rho-2} is equivalent to the condition
that $\varphi_{a,\rho}(T)$ is a contraction for every $a \in \T$.

For the case $\rho = 2$, let $D_a(2)$ be the closed half-plane which
is tangent to $\T$ at $a \in \T$ and which contains $\overline{\D}$
and let $\varphi_{a,2}$ be a fractional linear transformation taking
$D_a(2)$ onto $\overline{\D}$.  Then it follows from the above
comments regarding the numerical range that $T$ is a $2$-contraction
if and only if $\varphi_{a,2}(T)$ is a contraction for every $a \in
\T$.

It is also known \cite{NagyFoias} that every $\rho$-contraction is
similar to a contraction.  We summarize in terms of test collections
as follows.

\begin{theoremcite}
  \label{rho-contractions}
  For $\rho > 1$, let $\Phi_\rho = \{\varphi_{a,\rho}\}_{a \in \T}$,
  where $\varphi_{a,\rho}$ is defined as above.  Then $\Phi_\rho$ is a
  uniform test collection over $\overline{\D}$.
\end{theoremcite}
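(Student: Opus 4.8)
The plan is to reduce the claim to the known facts about $\rho$-contractions that were recalled just above, and then invoke the machinery of strong/uniform test collections already developed. Fix $\rho > 1$. Suppose $T \in \B(H)$ satisfies $\sigma(T) \subset \overline{\D}$ and $\|\varphi_{a,\rho}(T)\| \le 1$ for every $a \in \T$. The first step is to translate the hypothesis $\|\varphi_{a,\rho}(T)\| \le 1$ into the resolvent/norm estimates \eqref{eq:rho-1-2}, \eqref{eq:rho-2}, or the half-plane condition, according to whether $1 < \rho < 2$, $\rho > 2$, or $\rho = 2$; this is exactly the content of the paragraphs preceding the theorem, where $D_a(\rho)$ and $\varphi_{a,\rho}$ were set up so that $\varphi_{a,\rho}(T)$ being a contraction is equivalent to the membership of $T$ in the corresponding disk/half-plane constraint for all $a \in \T$. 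Consequently $T$ is a $\rho$-contraction, by the cited characterizations in \cite{NagyFoias}*{Chapter I}.

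The second step is to record that a $\rho$-contraction is similar to a contraction, which is again the cited result of Sz.-Nagy and Foias \cite{NagyFoias}; moreover the similarity constant is controlled purely in terms of $\rho$ (for instance one can take the similarity $S$ with $\|S\|\,\|S^{-1}\|$ bounded by a function of $\rho$ alone, coming from the $\rho$-dilation and the Poisson-kernel positivity \eqref{eq:operator-poisson-rho}). Thus there is an invertible $S \in \B(H)$, with $\|S\|\,\|S^{-1}\| \le c(\rho)$, such that $STS^{-1}$ is a contraction. Since a contraction has $\overline{\D}$ as a complete $1$-spectral set (von Neumann's inequality in its matricial form), it follows immediately that $\overline{\D}$ is a complete $K$-spectral set for $T$ with $K = \|S\|\,\|S^{-1}\| \le c(\rho)$. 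This $K$ depends only on $\rho$ (equivalently, only on $\Phi_\rho$), and not on $T$; hence $\Phi_\rho$ is a uniform test collection over $\overline{\D}$, as required.

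The only point that needs a little care is the equivalence in the first step: one must check that the stated norm conditions hold \emph{simultaneously for all} $a \in \T$ precisely when $T$ is a $\rho$-contraction, and that passing from "$\varphi_{a,\rho}(T)$ is a contraction" to the geometric condition "$\sigma(T)$, and in fact the relevant resolvent bounds, place $T$ inside $\bigcap_a D_a(\rho)$" is valid when $\sigma(T)$ may touch $\T$. For $1 < \rho < 2$ this is the observation that $\|\varphi_{a,\rho}(T)\|\le 1$ is equivalent to $\|\mu I - T\| \le |\mu| + 1$ where $\mu$ ranges over the center of $D_a(\rho)$ as $a$ ranges over $\T$, which sweeps out exactly the range $(\rho-1)/(2-\rho) \le |\mu| < \infty$; for $\rho > 2$ it is the analogous statement with the resolvent bound \eqref{eq:rho-2}; and for $\rho = 2$ it is the numerical-range reformulation, $W(T) \subseteq \overline{\D}$ iff $T \in \bigcap_a D_a(2)$, together with Theorem~\ref{numericalrange}. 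I expect this bookkeeping about which disks/half-planes are swept out — rather than any genuinely new estimate — to be the main (and only real) obstacle; once it is in place, the conclusion is just "$\rho$-contraction $\Rightarrow$ similar to a contraction $\Rightarrow$ complete $K$-spectral for $\overline{\D}$" with $K$ depending on $\rho$ alone.
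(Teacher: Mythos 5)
Your argument is correct and is essentially the paper's own (implicit) proof: the theorem is stated there precisely as a summary of the preceding discussion, in which $\|\varphi_{a,\rho}(T)\|\le 1$ for all $a\in\T$ is shown to characterize $\rho$-contractions via \eqref{eq:rho-1-2}, \eqref{eq:rho-2}, or the numerical-range condition, and the conclusion then follows from the Sz.-Nagy--Foias/Okubo--Ando fact that every $\rho$-contraction is similar to a contraction with similarity constant depending only on $\rho$. Your added care about which disks are swept out (using that larger tangent disks remain ``good,'' cf.\ Lemma~\ref{lem-disks}) is exactly the right bookkeeping.
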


\subsection{Inner functions}

Recall that a Blaschke product is a function of the form
\begin{equation*}
B(z) = e^{i\theta}z^k \prod_{j=1}^N b_{\lambda_j}(z),
\end{equation*}
where
\begin{equation*}
b_\lambda(z) = \frac{\overline{\lambda}}{|\lambda|}\cdot \frac{\lambda
  - z}{1 - \overline{\lambda}z},
\end{equation*}
is a disk automorphism, $N$ may be either a finite number or $\infty$
(in which case its zeros $\lambda_j \in \D$ satisfy the Blaschke
condition $\sum_{j=1}^\infty (1-|\lambda_j|) < \infty$).  The Blaschke
product is called finite if $N$ is finite.

\begin{theoremcite}[Mascioni, \cite{Mascioni}]
  \label{mascioni}
  Let $\varphi$ be a finite Blaschke product.  Then the one element
  set $\{\varphi\}$ is a non-uniform strong test collection over
  $\overline{\D}$.
\end{theoremcite}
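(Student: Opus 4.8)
The plan is to view a finite Blaschke product $\varphi$ of degree $N$ as an admissible function and deduce the result from Corollary~\ref{main-corollary} (or Theorem~\ref{main2}) together with the elementary fact, already noted in the excerpt, that for a \emph{single} function the ``strong'' adjective is automatic. First I would check admissibility: since $\varphi$ is a finite Blaschke product, it extends analytically across $\T$, satisfies $|\varphi|=1$ on all of $\T$, and $\varphi'$ never vanishes on $\T$ (a finite Blaschke product has no critical points on the boundary), so we may take the single arc $J_1 = \T = \partial\D$, the constant $\alpha=1$, and a neighborhood $\Omega_1$ of $\overline{\D}$ on which $\varphi$ is analytic with bounded, bounded-below derivative. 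Condition (f) in the definition of admissibility — that $\varphi(\zeta)\neq\varphi(z)$ for $\zeta\in J_1$, $z\in\overline{\D}$, $z\neq\zeta$ — is exactly the statement that $\varphi$ is \emph{injective}, which fails as soon as $N\geq 2$. This is the crux: a higher-degree Blaschke product is not injective, so $\{\varphi\}$ is \emph{not} admissible over $\overline\D$ in the sense required for the uniform conclusions of Theorems~\ref{main}, \ref{main2}, and the non-injectivity is precisely why only the \emph{non-uniform} conclusion can hold. So the real content of the theorem is twofold: (1) produce the functional-calculus data needed despite non-injectivity, and (2) confirm that without injectivity one is forced into the non-uniform regime.

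For the positive direction, suppose $\overline\D$ is a complete $K'$-spectral set for $\varphi(T)$ (equivalently, after a similarity, $\varphi(T)$ is a contraction, using the single-function remark). Since we only need a \emph{non-uniform} conclusion, we are allowed to let the final constant $K$ depend on $T$; in particular, we may first pass to the case $\sigma(T)\subset\D$. Indeed, either $\sigma(T)\subset\D$ already, or $\sigma(T)$ meets $\T$; in the latter case, observe that because $\varphi(T)$ is (similar to) a contraction and $\varphi$ maps $\T$ onto $\T$, one argues that $T$ is already similar to a contraction — here one invokes the Sz.-Nagy--Foia\c{s} $\rho$-contraction machinery or a direct spectral-mapping argument showing $\varphi(T)$ contractive forces $\|T\|$-control, so $\overline\D$ is already complete $K$-spectral for $T$ with $K$ depending on $T$, and we are done. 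So assume $\sigma(T)\subset\D$. Now I would apply Theorem~\ref{main2} with $n=1$: the hypothesis ``$f(z)=f_1(\varphi(z)) \implies f(T)=f_1(\varphi(T))$'' for $f\in\Rat(\overline\D)$ holds automatically by the Cauchy--Riesz functional calculus because $\sigma(T)\subset\D$ (this is exactly the implication used to derive Corollary~\ref{main-corollary}). The remaining obstruction is that Theorem~\ref{main2} still requires $\Phi$ to be \emph{admissible}, which as noted fails for $N\geq 2$ — so instead I would invoke the more general statement, alluded to after Problem~\ref{prob:alg-gen-prob}, that it suffices for the closed subalgebra of $A(\overline\D)$ generated by $\varphi$ to have finite codimension: for a finite Blaschke product of degree $N$, the subalgebra generated by $\varphi$ (and a pole function, but $\D$ is simply connected so none is needed) is the image of $A(\overline\D)$ under composition, and standard results on finite Blaschke products show $A(\overline\D)$ is a finitely generated module over this subalgebra of rank $N$, hence the codimension is finite. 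Feeding this into the machinery of \cite{article1} then yields that $\overline\D$ is complete $K$-spectral for $T$ with $K=K(\varphi,K',T)$.

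For the sharpness claim that the collection is only \emph{non-uniform}, I would reuse the critical-point example already displayed in the excerpt: since $N\geq 2$ forces $\varphi$ to have a critical point $z_0\in\D$ (the derivative of a degree-$N$ Blaschke product has $N-1$ zeros in $\D$), take $T$ with $T\neq z_0 I$ but $(T-z_0I)^2=0$ and set $T_n = n(T-z_0I)+z_0 I$; then $\varphi(T_n)=\varphi(z_0)I$ is a contraction for all $n$ (as $\varphi'(z_0)=0$ kills the nilpotent part in the Taylor expansion), while $\|T_n\|\to\infty$, so no $T$-independent $K$ can work. The main obstacle throughout is bridging from the \emph{admissible} hypotheses of Theorems~\ref{main}/\ref{main2} — which genuinely fail here — to the \emph{finite-codimension} generalization from \cite{article1}; once that bridge is in place, the non-uniform nature of the conclusion is exactly what allows the spectrum to be pushed inside $\D$ and the argument to close.
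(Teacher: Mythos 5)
Your proposal has two genuine gaps, and together they mean the argument never engages the actual content of the theorem. First, the case split on $\sigma(T)$ is backwards: when $\sigma(T)\subset\D$ the non-uniform conclusion is already trivial, with no hypothesis on $\varphi(T)$ at all, by Rota's theorem and Herrero--Voiculescu (as noted at the end of the discussion of test collections in Section~\ref{main-results}); the entire content of the statement lies in the case $\sigma(T)\cap\T\neq\emptyset$, which is precisely the case you dismiss by asserting that ``one argues that $T$ is already similar to a contraction'' via ``$\rho$-contraction machinery or a direct spectral-mapping argument.'' No such argument is available: $\rho$-contractivity requires the resolvent or norm bounds \eqref{eq:rho-1-2} or \eqref{eq:rho-2}, which do not follow from $\|\varphi(T)\|\le 1$, and there is no spectral-mapping control of $\|T\|$ by $\|\varphi(T)\|$ --- your own nilpotent example $T_n$ at the end exhibits exactly this failure. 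So the hard case is assumed rather than proved. Second, the ``finite codimension'' bridge is false: for $\deg\varphi=N\ge 2$ the subalgebra $\{f\circ\varphi : f\in A(\overline{\D})\}$ has \emph{infinite} linear codimension in $A(\overline{\D})$ (for $\varphi(z)=z^2$ it is the algebra of even functions), even though $A(\overline{\D})$ is a rank-$N$ module over it; module-finiteness and finite linear codimension are different things, so the finite-codimension machinery of \cite{article1} cannot be invoked this way. (Your observations that admissibility fails for $N\ge 2$ because of condition (f), and that ``strong'' is automatic for a singleton, are both correct.)

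For comparison: the paper does not prove this statement --- it is quoted from Mascioni --- but the Remark at the end of Section~\ref{sec:known-criteria} reproduces the argument that does work, and it handles boundary spectrum with no case split. Writing $B=\varphi=\prod_k b_k$ and letting $s_k$ be the standard orthonormal basis of the model space $H^2\ominus BH^2$, one has $1-\overline{B(w)}B(z)=(1-\bar w z)\sum_{k}\overline{s_k(w)}\,s_k(z)$ for $z,w$ in a neighbourhood of $\overline{\D}$; applying the $\Rat(\overline{\D})$-calculus (each $s_k(T)$ is defined and $s_1(T)$ is invertible, since the poles $1/\bar\lambda_k$ lie outside $\overline{\D}$) gives $\sum_k\|s_k(T)h\|^2-\sum_k\|s_k(T)Th\|^2=\|h\|^2-\|B(T)h\|^2\ge 0$, so $\|h\|_*^2=\sum_k\|s_k(T)h\|^2$ is an equivalent Hilbert norm in which $T$ is a contraction. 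This yields the complete $K$-spectral conclusion with $K$ the condition number of the similarity (which depends on $T$ through the resolvent norms $\|(I-\bar\lambda_k T)^{-1}\|$, whence the non-uniformity), and the ``strong'' version then follows from your singleton remark.
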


We cannot say that the one element set $\{\varphi\}$ is a uniform test
collection in $\overline{\D}$.  For example, take $\varphi(z) = z^2$,
which is a finite Blaschke product.  Then the operators $T_n$ on
$\C^2$ defined by the matrices $ T_n = {
  \begin{pmatrix}0 & n\\ 0 & 0\end{pmatrix} } $ satisfy $\varphi(T_n)
= 0$, but we have $\|T_n\| = n$.  Hence, $\overline{\D}$ can be a
$K$-spectral set for $T_n$ only if $K \geq n$.

In some of the theorems given above, the conclusion is that
some family of functions is a strong test collection, whereas in
others the conclusion is just that the family is a test collection.
Indeed, we do not know whether one can replace ``test collection'' by
``strong test collection'' in Theorems \ref{badea} and
\ref{numericalrange}.  The proofs of these theorems involve some kind
of operator valued Poisson kernel which turn out to be positive when
$\varphi(T)$ is a contraction, but they do not seem to work well if
$\varphi(T)$ simply has $\overline{\D}$ as a $K'$-spectral set.
Similarly, we do not know whether one can replace ``test collection''
by ``strong test collection'' in Theorem~\ref{rho-contractions}.

Theorem~\ref{mascioni} has been generalized by Stessin \cite{Stessin}
and Kazas and Kelley \cite{KazasKelley} to several classes of infinite
Blaschke products.  These generalizations give examples of test
functions on a set $\Omega$ which is neither an open domain nor its
closure.  We restate here Stessin's theorem in the language of test
collections.

\begin{theoremcite}[Stessin, \cite{Stessin}]
  Let $\varphi$ be a Blaschke product whose zeros
  $\{\lambda_j\}_{j=1}^\infty$ satisfy $\sum (1 - |\lambda_j|^2)^{1/2}
  < \infty$.  Let $\mathcal{P}$ be the set of poles of $\varphi$ and
  put $\Omega = \overline{\D}\setminus\overline{\mathcal{P}}$.  Then,
  the one element set $\{\varphi\}$ is a non-uniform strong test
  collection over~$\Omega$.
\end{theoremcite}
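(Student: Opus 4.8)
The plan is to treat this statement as what it is, namely a restatement of Stessin's theorem in the language of test collections, so that the content of the proof is (i) checking that the geometric data match the hypotheses of our definitions, (ii) unwinding the phrase ``non-uniform strong test collection over $\Omega$'' into the conclusion Stessin actually proves, and (iii) a remark on where the result sits with respect to the methods of this paper. First I would pin down the geometry. Since the poles of $\varphi$ are the reflections $1/\overline{\lambda_j}$ of its zeros, the accumulation set of $\mathcal P$ in $\overline{\D}$ coincides with the accumulation set $E\subset\T$ of $\{\lambda_j\}$; as $\sum(1-|\lambda_j|^2)^{1/2}<\infty$ forces the Blaschke condition, $E$ is a proper closed subset of $\T$, $\Omega=\overline{\D}\setminus\overline{\mathcal P}=\overline{\D}\setminus E$, and hence $\overline{\Omega}=\overline{\D}$. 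Near any point of $\T\setminus E$ only finitely many zeros cluster, so writing $\varphi$ as a finite Blaschke factor times a factor analytic and zero-free there shows that $\varphi$ continues analytically to a neighbourhood of $\overline{\D}\setminus E$ and maps $\Omega$ into $\overline{\D}$; thus $\{\varphi\}$ is a legitimate one-element family of the type admitted in the definition of a test collection over $\Omega$.

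Next I would unwind the definition. For a one-element family the word ``strong'' is free, as noted after the definitions: conjugate $T$ by the invertible $S$ for which $S\varphi(T)S^{-1}$ is a contraction. Moreover, by Paulsen's similarity theorem together with the von Neumann inequality (see \cite{Paulsen}), $\overline{\D}$ being a complete $K$-spectral set for an operator with spectrum in $\overline{\D}$ is equivalent to that operator being similar to a contraction. Hence ``$\{\varphi\}$ is a non-uniform strong test collection over $\Omega$'' says precisely: if $\sigma(T)\subset\overline{\D}\setminus E$ and $\varphi(T)$ is similar to a contraction, then $T$ is similar to a contraction, which is exactly Stessin's theorem in \cite{Stessin}. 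So the statement follows by citation, just as Theorem~\ref{mascioni} does in the finite case.

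For a reader who wants to see the result inside the present framework, the route is less direct, and this is where the real difficulty lies. A single Blaschke product $\varphi$ of degree $\ge 2$ is never admissible in the sense of Section~\ref{subsec:admiss-funct-famil}: condition (f) fails, because each boundary value of $\varphi$ has several preimages in $\overline{\Omega}$, so one cannot feed $\{\varphi\}$ directly into Theorem~\ref{main2} or Corollary~\ref{main-corollary}. What does work, in the spirit of Mascioni's original argument, is the algebraic relation: for a finite Blaschke product of degree $n$ one has $z^n+c_1(\varphi(z))z^{n-1}+\dots+c_n(\varphi(z))=0$ on $\overline{\D}$, where $c_1,\dots,c_n$ are rational and analytic across $\overline{\D}$ (their only pole is at $\varphi(\infty)\notin\overline{\D}$); applying the analytic functional calculus gives the operator identity $T^n+c_1(\varphi(T))T^{n-1}+\dots+c_n(\varphi(T))=0$, and then, using that $\varphi\colon\T\to\T$ is an $n$-fold covering, one lifts the unitary dilation $U$ of $\varphi(T)$ through $\varphi$ to a normal operator $N$ with $\sigma(N)\subset\T$ and checks that a suitable compression of a similarity conjugate of $N$ recovers $T$, yielding complete $K$-spectrality of $\overline{\D}$. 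Passing to Stessin's infinite Blaschke products, one first uses compactness of $\sigma(T)$ to stay at a positive distance from $E$, splits $\varphi$ as a finite Blaschke factor (the zeros away from $E$) times a factor analytic and zero-free near $\sigma(T)$, and controls the boundary regularity of $\varphi$ on arcs of $\T\setminus E$ by classical estimates on derivatives of Blaschke products (Protas, Ahern--Clark); this is exactly where the hypothesis $\sum(1-|\lambda_j|^2)^{1/2}<\infty$ is used. Keeping the algebraic relation and the covering under control as the number of zeros grows, together with the exclusion of $E$ from $\sigma(T)$, is the main obstacle, and it is the reason Stessin's theorem is quoted here rather than reproved.
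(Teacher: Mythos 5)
Your proposal is correct and matches the paper, which offers no proof of this statement at all: it is quoted from \cite{Stessin}, and the only mathematical content is the translation into the test-collection language, which you carry out correctly (the ``strong'' part being free for a singleton family, and complete $K$-spectrality of $\overline{\D}$ being equivalent, via Paulsen's similarity theorem, to similarity to a contraction, which is the form of the conclusion in Stessin's paper). One harmless slip: the hypothesis $\sum(1-|\lambda_j|^2)^{1/2}<\infty$ does not force the accumulation set $E$ of the zeros to be a proper subset of $\T$ (the zeros can be angularly dense while approaching $\T$ very fast), but nothing in your argument actually uses this claim.
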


Another recent result that can be put into the terminology of test
collections is that of concerning lemniscates.

\begin{theoremcite}[Nevanlinna, \cite{Nevanlinna}]
  Let $p$ be a monic polynomial, $R > 0$, and denote by $\gamma_R$ the
  set $\{z \in \C : |p(z)|=R\}$.  Assume that no critical point of $p$
  lies on $\gamma_R$.  Let $\Omega = \{z \in \C : |p(z)| < R \}$ and
  $\varphi = p/R$.  Then the single-element set $\{\varphi\}$ is a
  non-uniform test collection over $\overline{\Omega}$.
\end{theoremcite}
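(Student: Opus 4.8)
The plan is to pull the problem back, via a conformal change of variables, to Mascioni's theorem (Theorem~\ref{mascioni}) on finite Blaschke products, and then to push forward again using Theorem~\ref{main}; everything hinges on the geometry of $\gamma_R$ at a non-critical level. First I would observe that, since no critical point of $p$ lies on $\gamma_R$, the set $\gamma_R=\partial\Omega$ is a disjoint union of finitely many \emph{analytic} Jordan curves, and that, $|p|$ being subharmonic, $\{|p|>R\}$ has no bounded component, so $\widehat{\C}\setminus\overline{\Omega}$ is connected. Hence $\Omega$ splits into finitely many components $U_1,\dots,U_m$, each a Jordan domain bounded by an analytic curve $\Gamma_i$, and the compact sets $\overline{U_1},\dots,\overline{U_m}$ are pairwise disjoint. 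Because $\sigma(T)\subset\bigcup_i\overline{U_i}$, the Riesz functional calculus produces a (not necessarily orthogonal) $T$-invariant decomposition $H=H_1\oplus\cdots\oplus H_m$ with $T=\bigoplus_i T_i$, $\sigma(T_i)\subset\overline{U_i}$ and $\varphi(T_i)=\varphi(T)|_{H_i}$, whence $\|\varphi(T_i)\|\le\|\varphi(T)\|\le 1$. A rational matrix function of $\bigoplus_i T_i$ is estimated componentwise, at the cost of the norms of the Riesz projections (which are allowed to depend on $T$), so it suffices to prove the theorem when $\Omega$ is a single Jordan domain with analytic boundary $\Gamma$ and $\varphi=p/R\colon\Omega\to\D$ is a proper holomorphic map of some degree $d$.

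In that case I would fix a Riemann map $\Psi\colon\overline{\Omega}\to\overline{\D}$. Since $\Gamma$ is analytic, $\Psi$ extends to a biholomorphism between neighborhoods of $\overline{\Omega}$ and of $\overline{\D}$; splitting $\Gamma$ into two analytic subarcs, one checks directly that $(\Psi,\Psi)$ is an admissible function on $\overline{\Omega}$ which is injective and has non-vanishing derivative, so by Theorem~\ref{main} it is a uniform strong test collection over $\overline{\Omega}$. On the other hand, $B:=\varphi\circ\Psi^{-1}$ is holomorphic on a neighborhood of $\overline{\D}$, maps $\D$ into $\D$ and $\T$ into $\T$, hence is a finite Blaschke product (of degree $d$), and the composition rule of the Cauchy--Riesz calculus gives $\varphi(T)=B(\Psi(T))$. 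As $\|\varphi(T)\|\le 1$, the operator $B(\Psi(T))$ is a contraction, so $\overline{\D}$ is a complete $1$-spectral set for it, while $\sigma(\Psi(T))=\Psi(\sigma(T))\subset\overline{\D}$. Applying Mascioni's Theorem~\ref{mascioni} to the Blaschke product $B$ then yields a constant $K_0=K_0(B,\Psi(T))$ such that $\overline{\D}$ is a complete $K_0$-spectral set for $\Psi(T)$; feeding this into the \emph{strong} conclusion of Theorem~\ref{main} for $(\Psi,\Psi)$ shows $\overline{\Omega}$ is a complete $K$-spectral set for $T$, with $K$ depending only on $\Omega$, $p$, $R$ and $T$. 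Together with the reduction of the previous paragraph this proves the claim, and in fact shows that $\{\varphi\}$ is a non-uniform \emph{strong} test collection over $\overline{\Omega}$, the conclusion holding in the complete sense throughout.

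The only delicate point is the geometric reduction, namely that a lemniscate at a non-critical level breaks up into finitely many analytic Jordan domains with pairwise disjoint closures. This is what both lets the Riesz calculus split $T$ and, crucially, makes the Riemann maps $\Psi$ extend holomorphically across $\partial\Omega$. Because of the latter, the usual obstacle of this subject, that $\sigma(T)$ may meet $\partial\Omega$ instead of lying in $\Omega$, does not intervene: all the functional-calculus identities we rely on, in particular $\varphi(T)=B(\Psi(T))$ and the transfer of complete $K$-spectrality through $\Psi$, are valid simply because every function involved is analytic on a full neighborhood of $\overline{\Omega}$. What remains is routine bookkeeping with the composition rule and the invocation of the two cited results, Theorem~\ref{main} and Mascioni's theorem.
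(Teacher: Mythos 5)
The paper does not actually prove this statement: it appears in Section~\ref{sec:known-criteria} only as a result imported from \cite{Nevanlinna} and restated in the language of test collections, so there is no internal proof to compare yours against. Judged on its own, your argument is correct, and it is a nice self-contained derivation from the paper's own machinery: Riesz decomposition over the components of the lemniscate, then Mascioni's Theorem~\ref{mascioni} pulled through the Riemann map via Theorem~\ref{main}. The load-bearing points all hold: at a regular level the components $U_i$ of $\{|p|<R\}$ are analytic Jordan domains with pairwise disjoint closures (each $U_i$ is simply connected by the maximum principle applied inside any Jordan curve in $U_i$, and a meeting of two closures would force a self-intersection of $\gamma_R$, hence a critical point), so the Riesz idempotents split $T$ at a cost depending only on $T$, which is permitted in the non-uniform setting; $B=\varphi\circ\Psi^{-1}$ is indeed a finite Blaschke product and $\varphi(T_i)=B(\Psi(T_i))$ by the composition rule, every function involved being analytic across the boundary, so the delicate case $\sigma(T)\cap\partial\Omega\neq\emptyset$ genuinely causes no trouble; and $(\Psi,\Psi)$ is admissible, injective, with nonvanishing derivative. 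Two minor remarks. Your justification that each component is a Jordan domain (``$\{|p|>R\}$ has no bounded component, so $\widehat{\C}\setminus\overline{\Omega}$ is connected, hence\dots'') is too quick as written; what you actually need is simple connectivity of each component of $\{|p|<R\}$, which the maximum principle supplies directly. And invoking Theorem~\ref{main} for the final transfer is heavier than necessary: once $\overline{\D}$ is completely $K_0$-spectral for $\Psi(T_i)$, the identity $f(T_i)=(f\circ\Psi^{-1})(\Psi(T_i))$ for $f\in\Rat(\overline{U_i})\otimes M_s$, together with the density of $\Rat(\overline{\D})$ in $A(\overline{\D})$, already yields $\|f(T_i)\|\le K_0\|f\|_{\overline{U_i}}$ with no admissibility checks. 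Neither point is a gap; both are easily repaired or streamlined.
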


\begin{remark}
  If $\Omega$ is a complete $K$-spectral set for an operator $T$, one
  can speak about constructing a concrete Sz.-Nagy-Foias like model of
  $T$ in $\Omega$.  In the simplest case, this model will be the
  compression of the multiplication operator $f\mapsto zf$ on the
  space $H^2(\Omega, U)\ominus \theta H^2(\Omega, Y)$, where $U, Y$
  are auxiliary Hilbert spaces and $\theta\in H^\infty(\Omega,
  L(Y,U))$ is an analogue of the characteristic function.  As it was
  shown in~\cite{Yakub}, there are important cases when the function
  $\theta$ can be calculated explicitly.  This is also true for some
  of the above examples.  If $T$ is a $\rho$-contraction, there is an
  explicit formula for its similarity to a contraction.  See, for
  instance,~\cite{OkuboAndo}.

  Such an explicit similarity transform is also available when
  $\|B(T)\|\le 1$ for a finite Blaschke product $B$.  Indeed, let
  \begin{equation*}
    B(z)=\prod_{k=1} b_k(z),
  \end{equation*}
  where $b_k(z)=(z-\la_k)/(1-\bar\la_k z)$ are Blaschke factors,
  $|\la_k|<1$.  The functions
  \begin{equation*}
    s_k(z)=\frac{(1-|\la_k|^2)^{1/2}}{1-\bar\la_k z}\,
    \prod_{j=1}^{k-1} b_j(z),\qquad k=1, \dots, n,
  \end{equation*}
  form an orthonormal basis of the model space $H^2\ominus B H^2$,
  whose reproducing kernel is $\big(1-\overline{B(w)}
  B(z)\big)/(1 - \bar w z)$.  For $z,w$ in a neighborhood of the
  closed unit disk, this gives
  \begin{equation*}
    1- \overline{B(w)} B(z)=(1 -\bar w z )\sum_{k=1}^n
    \overline{s_k(w)} s_k(z),
  \end{equation*}
  This then implies that for any $h\in H$,
  \begin{equation*}
    \sum_{k=1}^n \|s_k(T) h\|^2
    -
    \sum_{k=1}^n \|s_k(T) T h\|^2
    = \|h\|^2-\|B(T)h\|^2\ge 0,
  \end{equation*}
  and therefore $\|h\|^2_* : = \sum_{k=1}^n \|s_k(T) h\|^2$ defines a
  Hilbert space norm on $H$ for which $T$ is a contraction.  Since
  $s_1(T)$ is an invertible operator, this norm is equivalent to the
  original.

  As shown in \cite{Yakub}, there are ways of calculating the
  characteristic function $\theta$ of an operator $T$ explicitly
  without knowing an explicit form of the similarity transform
  converting $T$ into an operator for which $\Om$ is a complete
  spectral set with constant $1$.  As also explained in the paper, one
  obtains additional cases where explicit formulas are available by
  admitting a larger class of characteristic functions (which are then
  no longer unique).  Apart from these examples, we do not know either
  an explicit form of the similarity transform of the above type, nor
  explicit characteristic functions.
\end{remark}

\section{Proofs of Theorems~\ref{Hav-Nersess-opers}
and
\ref{Putinar-Sandb-non-conv}}
\label{proofs12}

In what follows, we will use the following lemma.  It is a special
case of a well known principle in the theory of $C^*$-algebras that
says that whenever the range of a linear map is commutative, the
complete boundedness of this map comes for free.

\begin{lemma}
  \label{lemma-cb}
  Let $T : A(\overline{\Omega}_1) \to A(\overline{\Omega}_2)$ be a
  bounded operator, and $\alpha \in A(\overline{\Omega})^*$ a bounded
  linear functional.  Then $T$ and $\alpha$ are completely bounded,
  and $\|T\|_\cb = \|T\|$ and $\|\alpha\|_\cb = \|\alpha\|$.
\end{lemma}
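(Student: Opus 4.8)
The plan is to reduce everything to the classical fact that a bounded linear map into (or out of) a commutative $C^*$-algebra is automatically completely bounded with the same norm. Since $A(\overline{\Omega}_2)$ sits isometrically inside the commutative $C^*$-algebra $C(\partial\Omega_2)$ (via restriction to the boundary, using the maximum principle), and $\mathbb C$ is itself a commutative $C^*$-algebra, both statements are instances of a single principle. So the first step is to record the statement in the form we actually need: if $B$ is a commutative $C^*$-algebra and $S$ is any operator space, then every bounded linear map $u:S\to B$ satisfies $\|u\|_{\cb}=\|u\|$. Granting this, the lemma follows: for the functional $\alpha$, take $S=A(\overline{\Omega})$ and $B=\mathbb C$; for the operator $T$, take $S=A(\overline{\Omega}_1)$ and $B=C(\partial\Omega_2)\supset A(\overline{\Omega}_2)$, noting that the inclusion $A(\overline{\Omega}_2)\hookrightarrow C(\partial\Omega_2)$ is a complete isometry because it is a unital isometric homomorphism of function algebras, so $\|T\|_{\cb}$ computed in $A(\overline{\Omega}_2)$ equals $\|T\|_{\cb}$ computed in $C(\partial\Omega_2)$.

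Next I would prove the commutative-range principle itself, since it is short and self-contained. Fix $s\ge 1$ and a matrix $[f_{ij}]\in S\otimes M_s$ with $\|[f_{ij}]\|\le 1$; we must show $\|[u(f_{ij})]\|_{M_s(B)}\le \|u\|$. Represent $B$ faithfully; because $B$ is commutative we may take $B\subset C(X)$ for a compact Hausdorff space $X$, and then $M_s(B)\subset M_s(C(X))=C(X,M_s)$, so the norm of $[u(f_{ij})]$ is the supremum over $x\in X$ of $\|[\,(u(f_{ij}))(x)\,]\|_{M_s}$. Fix $x\in X$. The evaluation $\delta_x:B\to\mathbb C$ is a state, hence $\varphi_x:=\delta_x\circ u:S\to\mathbb C$ is a bounded functional with $\|\varphi_x\|\le\|u\|$. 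Now the key elementary fact: for a scalar functional $\varphi$ on an operator space, $\|\varphi\otimes\id_s\|=\|\varphi\|$. To see this, given $[f_{ij}]$ with norm $\le1$ and unit vectors $\xi=(\xi_i),\eta=(\eta_j)\in\mathbb C^s$, the element $g=\sum_{i,j}\bar\xi_i\eta_j f_{ij}\in S$ has norm $\le 1$ (it is the compression of $[f_{ij}]$ by the rank-one "vectors" $\xi,\eta$), and $\langle [\varphi(f_{ij})]\eta,\xi\rangle=\varphi(g)$, whence $|\langle[\varphi(f_{ij})]\eta,\xi\rangle|\le\|\varphi\|$; taking the sup over $\xi,\eta$ gives $\|[\varphi(f_{ij})]\|_{M_s}\le\|\varphi\|$. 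Applying this with $\varphi=\varphi_x$ yields $\|[\,(u(f_{ij}))(x)\,]\|_{M_s}\le\|u\|$ for every $x$, hence $\|[u(f_{ij})]\|\le\|u\|$, i.e. $\|u\|_{\cb}\le\|u\|$. The reverse inequality $\|u\|\le\|u\|_{\cb}$ is trivial (take $s=1$).

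The only genuinely substantive point — and the one I would be careful about — is the compression argument showing $\|g\|_S\le 1$ and the identity $\langle[\varphi(f_{ij})]\eta,\xi\rangle=\varphi(g)$: this is where the operator-space structure on $S\otimes M_s$ is used. Concretely, embed $S\subset\B(H)$ so that $S\otimes M_s\subset\B(H\otimes\mathbb C^s)=\B(H^s)$; then for $\xi,\eta\in\mathbb C^s$ the maps $R_\eta:H\to H^s$, $h\mapsto h\otimes\eta$, and $R_\xi$ have norm $\le\|\eta\|,\|\xi\|$, and $R_\xi^*[f_{ij}]R_\eta=\sum_{i,j}\bar\xi_i\eta_j f_{ij}=g$ as an operator on $H$, so $\|g\|\le\|[f_{ij}]\|\le 1$. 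Since this operator-theoretic norm is independent of the faithful representation, $\|g\|_S\le1$. Everything else is bookkeeping. I would then remark, as the paper does, that this is the standard principle (cf.\ Paulsen's book), so one could alternatively just cite it; but giving the two-paragraph argument keeps the paper self-contained.
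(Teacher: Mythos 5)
Your proposal is correct and takes essentially the same approach as the paper: both reduce the statement to the standard fact that a bounded linear map from an operator space into a commutative $C^*$-algebra is automatically completely bounded with the same norm, after observing that $A(\overline{\Omega})$ embeds (completely) isometrically into $C(\partial\Omega)$. The only difference is that you supply a self-contained proof of that principle via the compression argument $g=R_\xi^*[f_{ij}]R_\eta$, whereas the paper simply cites it (Paulsen, Theorem~3.9, or Loebl, Lemma~1); your argument for it is correct.
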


\begin{proof}[Proof]
  If $A$ and $B$ are $C^*$-algebras, with $B$ commutative, $S$ a (not
  necessarily closed) linear subspace of $A$ and $\phi : S \to B$ is a
  bounded linear map, then it is well known that $\phi$ is completely
  bounded and $\|\phi\|_\cb = \|\phi\|$ (see, for instance,
  \cite{Paulsen}*{Theorem 3.9} or \cite{Loebl}*{Lemma 1}).  The lemma
  then follows from the fact that $A(\overline{\Omega})$ is a subspace
  of the commutative $C^*$-algebra $C(\partial\Omega)$ and the norm in
  $A(\overline{\Omega})$ coincides with the norm that it inherits as a
  subspace of $C(\partial\Omega)$.
\end{proof}

\begin{proof}[Proof of Theorem~\ref{Hav-Nersess-opers}]
  We start by proving (ii) with $n=2$.

  Following the steps of \cite{HavinNersessian}*{Example 4.1}, we see
  that there are bounded operators $G_k : A(\overline{\Omega}) \to
  A(\overline{\Omega}_k)$, $k=1,2$, such that $f = G_1(f) + G_2(f)$,
  for every $f \in A(\overline{\Omega})$.  Indeed, although
  \cite{HavinNersessian}*{Example 4.1} is formulated for two simply
  connected domains whose boundaries intersect in only two points, the
  arguments used there are local at each point in
  $\partial\Omega_1\cap\partial\Omega_2$ and can be applied in our
  setting.  A key remark that we need to use here is that
  $X=\partial\Omega_1 \cap \partial\Omega_2$ is a finite set, a
  consequence of the transversality condition, which implies that
  every point $z \in X$ has an open neighborhood in $\widehat{\C}$
  which contains no other points from $X$.

  Take $f \in \Rat(\overline{\Omega}_1\cap\overline{\Omega}_2) \otimes
  M_s$, an $s\times s$ matrix-valued rational function with poles off
  $\overline{\Om}_1\cap \overline{\Om}_2$.  We first want to check
  that
  \begin{equation}
    \label{eq:calculus}
    f(T) = [(G_1\otimes \id_s)(f)](T) + [(G_2 \otimes \id_s)(f)](T).
  \end{equation}
  Note that the operators $[(G_k\otimes \id_s)(f)](T)$ are defined by
  the $A(\overline{\Omega}_k)$ calculus for $T$, which is well defined
  because each $\overline{\Omega}_k$ is a complete $K$-spectral set
  for $T$.

  The function $f$ can be decomposed as $f = f_1 + f_2$, with $f_j \in
  \Rat(\overline{\Omega}_j)\otimes M_s$, because any pole $a$ of $f$
  satisfies $a \in \widehat{\C}\setminus\overline{\Omega}_j$ for
  either $j=1$ or $j=2$.  Put $g_k = (G_k \otimes \id_s)(f)$, $k=1,2$.
  We have
  \begin{equation*}
  f_1 - g_1 = g_2 - f_2, \quad\text{in
  }\overline{\Omega}_1\cap\overline{\Omega}_2.
  \end{equation*}
  The left hand side of this equality belongs to
  $A(\overline{\Omega}_1)\otimes M_s$ and the right hand side belongs
  to $A(\overline{\Omega}_2)\otimes M_s$.  Thus this equation defines
  a function $h$ in $A(\overline{\Omega}_1 \cup \overline{\Omega}_2)
  \otimes M_s$ by $h = f_1 - g_1$ in $\overline{\Omega}_1$ and $h =
  g_2 - f_2$ in $\overline{\Omega}_2$.  Let $\{h_n\}_{n=1}^\infty
  \subset \Rat(\overline{\Omega}_1\cap\overline{\Omega}_2)\otimes M_s$
  be rational functions such that $h_n \to h$ uniformly in
  $\overline{\Omega}_1\cup\overline{\Omega}_2$.  Since $h_n \to f_1 -
  g_1 $ uniformly in $\overline{\Omega}_1$, we have that $h_n(T) \to
  f_1(T) - g_1(T) $ in operator norm.  On the other hand, since $h_n
  \to g_2 - f_2$ uniformly in $\overline{\Omega}_2$, we have that
  $h_n(T) \to g_2(T)-f_2(T)$ in operator norm.  Hence $f_1(T) - g_1(T)
  = g_2(T)-f_2(T)$.  This proves \eqref{eq:calculus}, because
  $f_1(T)+f_2(T) = f(T)$ by the rational functional calculus.

  Now we estimate
  \begin{equation*}
    \|f(T)\| \leq \sum_{k=1}^2 \|[(G_k\otimes \id_s)(f)](T)\| \leq K
    \left(\sum_{k=1}^2 \|G_k\|_\cb\right) \|f\|_{A(\overline{\Omega})
      \otimes M_s}.
  \end{equation*}
  By Lemma~\ref{lemma-cb}, $\overline{\Omega}$ is a complete
  $K'$-spectral set for $T$, with $K' = K(\|G_1\| + \|G_2\|)$.

  Now suppose that $n > 2$ and that the sets
  $\Omega_1,\ldots,\Omega_n$ satisfy the hypotheses of the theorem.
  Then the transversality conditions imply that
  $\Omega_1\cap\Omega_2,\Omega_3,\ldots,\Omega_n$ also satisfy the
  hypotheses of the theorem.  This enables us to apply induction in
  $n$.

  The proof of (i) is by the same argument, and indeed is somewhat
  simpler, since one only has to deal with scalar analytic functions
  and there is no need to invoke Lemma~\ref{lemma-cb}.
\end{proof}

\begin{remark}
  The proof relies essentially on the Havin-Nersessian separation of
  singularities: given some domains $\Om$, $\Om_1$, $\Om_2$ with good
  geometry such that $\Om=\Om_1\cap\Om_2$, any function $f$ in
  $H^\infty(\Om)$ admits a decomposition $f=f_1+f_2$, $f_j=G_j(f)\in
  H^\infty(\Om_j)$.  However, there are cases when the
  Havin-Nersessian separation fails, but nevertheless the following
  assertion is still true: if $\overline\Om_j$ is a ($1$-)spectral set
  for $T$, then $\overline\Om$ is a complete $K$-spectral set for $T$.
  This holds, for instance, if $\Om_1$ and $\Om_2$ are open
  half-planes such that $\Om_1\cup\Om_2=\C$.  In this case, $\Om_j$
  are simply connected, and so they are spectral sets for $T$ if and
  only if they are complete spectral sets.

  This assertion follows, for instance, from~\cite{Crouzeix}.
  However, there is no Havin-Nersessian separation in this case.  We
  reproduce arguments similar to those in
  \cite{HavinNersessian}*{Example 2.1} for the convenience of the
  reader.  By applying a linear map, we can assume that $\Om_1=\{\Re z
  <1\}$ and $\Om_2=\{\Re z >-1\}$.  Then the function
  $f(z)=\log((z+2)/(z-2))$ is in $H^\infty(\Om_1\cap\Om_2)$, but
  cannot be represented as $f=f_1+f_2$, $f_j\in H^\infty(\Om_j)$.
  Indeed, an easy application of a variant of the Liouville theorem
  shows that any representation $f = f_1 + f_2$ with analytic
  functions $f_j$ satisfying, say, $|f_j(z)|\le C(|z|+1)^{1/2}$ should
  satisfy $f_1(z)=S-\log(z-2)$, $f_2(z)=-S+\log(z+2)$, where $S$ is a
  constant, and in no case $f_j$ are bounded in $\Omega_j$.

  It is easy to modify this example to likewise produce bounded simply
  connected domains $\Om_1$ and $\Om_2$ with the same properties.
\end{remark}

\begin{question}
Consider Theorem~\ref{Hav-Nersess-opers} for the case of
two bounded and convex domains $\Om_1$ and $\Om_2$.
Can the constant $K'$ be
chosen to depend only on $K$ and not on the geometry of
$\Om_1$ and $\Om_2$?  Several modifications of this question
are possible, for instance, we can pose it for two general
Jordan domains or for domains, the boundaries of which have bounded
curvature.  Even when $K=1$ the questions are still of interest.
\end{question}

Now we pass to the proof of Theorem~\ref{Putinar-Sandb-non-conv}.  We
need some preliminaries.  Recall that we say that $D\subset
\widehat\C$ is a closed disk in $\widehat{\C}$ if it has of one of the
following three forms:
\begin{equation*}
\{z \in \widehat{\C} : |z - a| \leq r\},\quad
\{z \in \widehat{\C} : |z - a| \geq r\},\quad \{z \in \widehat{\C} :
\Re \alpha(z-a) \geq 0\},
\end{equation*}
i.e., it is either the interior of a disk in $\C$, the exterior of a
disk, or a half-plane.  We will refer to disks $\{z \in \widehat{\C} :
|z - a| \leq r\}$ as ``genuine'' disks.  Next, suppose $T$ is a
Hilbert space operator and $D$ has one of the above three forms.  We
say that $D$ is \textit{a good disk for $T$} if the following
condition holds (depending on the case):
\begin{itemize}
\item If $D=\{z \in \widehat{\C} : |z - a| \leq r\}$, we require that
  $\|T-a\|\le r$;
\item If $D=\{z \in \widehat{\C} : |z - a| \geq r\}$, we require that
  $a\notin \si(T)$ and $\|(T-a)^{-1}\|\le r^{-1}$;
\item If $D=\{z \in \widehat{\C} : \Re \alpha (z-a) \geq 0\}$, we
  require that $\Re \big(\al(T-a)\big)\geq 0$.
\end{itemize}

\begin{lemma}
  \label{lem-disks}
  Let $T$ be a Hilbert space operator.
  \begin{enumerate}
  \item[$(i)$] Suppose $\psi: \C\to\C$ is a M\"obius map, and that
    $\si(T)$ does not contain the pole of $\psi$, so that the operator
    $\psi(T)$ is bounded.  Then, given a closed disk $D$ in the
    Riemann sphere, $D$ is good for $T$ if and only if its image
    $\psi(D)$ is good for $\psi(T)$.
  \item[$(ii)$] Whenever $D_1\subset D_2$ are two Riemann sphere disks
    such that $D_1$ is good for $T$, the disk $D_2$ is also good for
    $T$.
  \end{enumerate}
\end{lemma}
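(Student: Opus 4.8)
The plan is to handle the two parts separately, reducing everything to simple operator-theoretic manipulations. For part $(i)$, the key observation is that any Möbius map $\psi$ is a composition of maps of three elementary types: translations $z\mapsto z+b$, dilations/rotations $z\mapsto cz$ with $c\neq 0$, and the inversion $z\mapsto 1/z$. Since a composition of good-disk-preserving bijections is again good-disk-preserving, and since the inverse of a Möbius map is again Möbius, it suffices to verify the claim when $\psi$ is one of these three generators. For translations and for multiplication by a nonzero constant the verification is immediate from the definitions of ``genuine'' disk, exterior of a disk, and half-plane, together with the corresponding definitions of ``good disk for $T$'': e.g. $\|T-a\|\le r$ is the same condition as $\|(T+b)-(a+b)\|\le r$, and $\Re(\alpha(T-a))\ge 0$ transforms correctly under $z\mapsto cz$ after absorbing $c$ into $\alpha$. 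The substantive case is the inversion $\psi(z)=1/z$. Here one must track how $\psi$ permutes the three types of disks and check the matching resolvent estimates. The main point is that if $D=\{|z-a|\ge r\}$ with $0<r<|a|$ (so $0\notin D$) then $\psi(D)$ is a genuine disk, and the condition $\|(T-a)^{-1}\|\le r^{-1}$ should translate, via the resolvent identity relating $(T-a)^{-1}$ and $T^{-1}$, into the statement $\|\psi(T)-\tilde a\|\le\tilde r$ for the appropriate center $\tilde a$ and radius $\tilde r$ of $\psi(D)$; the remaining sub-cases (a genuine disk containing or not containing $0$, a half-plane through or not through $0$) are handled by the same bookkeeping. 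A clean way to organize this is to note that $D$ is good for $T$ precisely when $\|p_{w}(T)\|\le \operatorname{dist}(w,\widehat\C\setminus D)^{-1}$-type inequalities hold for suitable test points $w$, but in practice it is probably cleanest just to enumerate the finitely many configurations.

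For part $(ii)$, I would argue by cases on the type of the larger disk $D_2$, again using part $(i)$ to normalize. If $D_2$ is a genuine disk $\{|z-a|\le r\}$, apply a Möbius map sending $a$ to $0$ and rescaling so that $D_2=\{|z|\le 1\}=\overline\D$; then $D_1\subset\overline\D$, and whatever the type of $D_1$, the hypothesis that $D_1$ is good for $T$ forces $\sigma(T)\subset D_1\subset\overline\D$ and in fact $\|T\|\le 1$ — this last step is the one that needs a short argument. For instance, if $D_1=\{|z-b|\le s\}$ with the disk contained in $\overline\D$, then $\|T-b\|\le s$ gives $\|T\|\le |b|+s\le 1$; if $D_1$ is the exterior of a disk or a half-plane contained in $\overline\D$, then actually $D_1=\overline\D$ is impossible unless $D_1$ is genuine, so these sub-cases are vacuous or trivial once one observes a half-plane or disk-exterior can only be contained in a genuine disk if it is empty. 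The cases where $D_2$ is a half-plane or a disk-exterior are similar and in fact easier: after a Möbius normalization $D_2=\{\Re z\ge 0\}$ or $D_2=\{|z|\ge 1\}$, one checks that goodness of any sub-disk $D_1$ directly implies the (weaker) goodness condition for $D_2$, using monotonicity of the relevant inequality (e.g. $\|(T-a)^{-1}\|\le r^{-1}$ with the exterior disk $\{|z-a|\ge r\}$ contained in $\{|z|\ge 1\}$ forces $\|(T)^{-1}\|\le 1$ via a resolvent/Neumann-series estimate, hence $\{|z|\ge 1\}$ is good).

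The main obstacle I anticipate is purely organizational rather than deep: part $(i)$ with $\psi$ the inversion requires cross-referencing three incoming disk types with three outgoing disk types (nine configurations, several of which merge or are impossible because of the position of $0$ relative to $D$ and $\sigma(T)$), and in each live configuration one must produce the exact arithmetic relation between the data $(a,r)$ or $(\alpha,a)$ of $D$ and the data of $\psi(D)$, then match it against the resolvent identity. None of these computations is hard — each is a one-line algebraic manipulation plus the standard fact that $\|(T-\mu)^{-1}\|\le\rho^{-1}$ is equivalent to the closed disk $\{|z-\mu|\ge\rho\}$ containing $\sigma(T)$ with the stated norm bound — but assembling them without error is the part requiring care. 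I would present part $(i)$ by first stating the reduction to the three generators, then dispatching translations and dilations in one sentence, and then treating the inversion with a short table or case list; part $(ii)$ then follows quickly from $(i)$ plus the elementary containment observations above.
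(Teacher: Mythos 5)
Your plan is workable and I don't see a step that would actually fail, but it takes a genuinely different and considerably heavier route than the paper's proof, and the one computation on which everything in your part $(i)$ hinges is precisely the part you leave unexecuted. The paper's argument rests on a single unifying observation: $D$ is good for $T$ if and only if $\sigma(T)\subset D$ and $\eta(T)$ is a contraction, where $\eta$ is \emph{any} M\"obius map taking $D$ onto $\overline\D$ --- the choice of $\eta$ is irrelevant because two such maps differ by a disk automorphism, and $S$ is a contraction iff $b(S)$ is for every disk automorphism $b$. Granting this, one only has to verify the characterization once for each of the three disk types (genuine disk: $\eta(z)=(z-a)/r$; exterior: $\eta(z)=r/(z-a)$; half-plane: the Cayley transform together with the identity $\Re T\ge 0 \iff \|(I-T)x\|\le\|(I+T)x\|$), after which part $(i)$ is immediate with no decomposition of $\psi$ into generators and no three-by-three table, and part $(ii)$ follows in two lines: normalize $D_1=\overline\D$ by $(i)$, so $T$ is a contraction, and apply von Neumann's inequality to a M\"obius map sending $D_2$ onto $\overline\D$ (which has modulus $\le 1$ on $\overline\D\subset D_2$). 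By contrast, your part $(i)$ defers the inversion case to ``bookkeeping,'' but matching, say, $\|(T-a)^{-1}\|\le r^{-1}$ against $\|T^{-1}-\tilde a\|\le\tilde r$ for the correct $\tilde a=\bar a/(|a|^2-r^2)$, $\tilde r=r/(|a|^2-r^2)$ is not a one-line identity at the operator level; the clean way to see that these two conditions are equivalent is exactly the disk-automorphism invariance above, so you would end up rediscovering the paper's key step inside your case analysis (or else grinding through genuine operator factorizations in each of the live configurations). Your part $(ii)$, which normalizes $D_2$ instead of $D_1$ and then argues case by case with Neumann series and numerical-range estimates, does go through --- the estimates $\|T^{-1}\|\le(|b|-s)^{-1}$, $\|T^{-1}\|\le(s-|b|)^{-1}$ and $\Re\langle Tx,x\rangle\ge(\Re b-s)\|x\|^2$ all close correctly --- and it has the mild virtue of avoiding von Neumann's inequality, but it trades one application of a standard theorem for another round of case-by-case verification. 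In short: same conclusion, but the paper buys uniformity and brevity with one well-chosen reformulation of ``good,'' while your version buys elementarity at the cost of a large amount of deferred arithmetic that still needs to be written out before the proof is complete.
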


\begin{proof}
  We will show that $D$ is good for $T$ if and only if $\sigma(T)
  \subset D$ and $\psi(T)$ is a contraction, where $\psi$ is a
  M\"obius transform taking $D$ onto $\overline{\D}$.  Part~$(i)$
  clearly follows from this property.  First note that if $\varphi$ is
  a disk automorphism, then $T$ is a contraction if and only if
  $\varphi(T)$ is a contraction.  Since every two M\"obius maps taking
  $D$ onto $\overline{\D}$ differ by composition on the right with a
  disk automorphism, we see that $\psi(T)$ is a contraction for every
  M\"obius map $\psi$ taking $D$ onto $\overline{\D}$ if and only if
  $\psi(T)$ is a contraction for some particular choice of such
  M\"obius map.

  Now we examine the three kinds of disks separately.  If $D = \{z :
  |z-a| \leq r\}$, then we can take $\psi(z) = (z-a)/r$ as a M\"obius
  map taking $D$ onto $\overline{\D}$.  The disk $D$ is good for $T$
  precisely when
  $\|\psi(T)\| \leq 1$.
  Similarly,
  a disk $D   = \{z : |z-a| \geq r\}$ is good for $T$ if and only
  if $\psi(T)$ is well-defined and is a contraction, where now
  we put $\psi(z) = r/(z-a)$, which is a
  M\"obius map taking $D$ onto $\overline{\D}$.

  In the last case, when $D = \{z : \Re \alpha(z-\alpha) \geq 0\}$ is a
  half-plane, $D$ is good for $T$ if and only if $\C_+ = \{z : \Re z
  \geq 0\}$ is good for $\alpha(T-a)$.  Hence it suffices to consider
  only the case when $D = \C_+$.  Using the
  standard fact that $\Re T \geq 0$ if and only if
  \begin{equation*}
    \|(I+T)x\|^2 \geq \|(I-T)x\|^2, \quad\forall x,
  \end{equation*}
  we see that $\Re T \geq 0$ if and only if $\psi(T)$ is a
  contraction, where now $\psi$ is a
  M\"obius map that takes $\C_+$ onto $\D$, given by
  $\psi(z) = (1-z)/(1+z)$.

  To prove~$(ii)$, we can use~$(i)$ to reduce first to the case when
  $D_1 = \overline{\D}$.  In this case, $T$ is a contraction.  Let
  $\psi$ be a M\"obius transform taking $D_2$ onto $\overline{\D}$.
  Then $|\psi| \leq 1$ in $\overline{\D}$, so $\psi(T)$ is a
  contraction by von Neumann's inequality.  It follows that $D_2$ is
  good for $T$, since $\sigma(T) \subset D_1 \subset D_2$.
\end{proof}

\begin{proof}[Proof of Theorem~\ref{Putinar-Sandb-non-conv}]
  By Condition~\ref{condA}, there are closed arcs
  $\gamma_1,\ldots,\gamma_N$ satisfying the hypotheses listed there.
  We are going to construct domains $\Om_1, \dots, \Om_N$, whose
  closures are complete $K$-spectral for $T$, with $\Om$ their
  intersection.  Then we will apply Theorem~\ref{Hav-Nersess-opers} to
  deduce that $\overline{\Om}$ is also complete $K'$-spectral for $T$,
  for some~$K'$.

  Fix $k \in \{ 1,\ldots,N \}$, and choose some point $z_k \in
  \bigcap_{\lambda\in\gamma_k} B(\mu_k(\lambda),R_k)$.  Put
  $\varphi_k(z) = (z-z_k)^{-1}$.  Now take some $\la\in \gamma_k$.
  Since $z_k\in B(\mu_k(\la), R_k)$, it follows that the closed disk
  \begin{equation*}
    D_\la^k=\phi_k\big(\C\sm B(\mu_k(\la), R_k)\big)
  \end{equation*}
  is genuine.  Since $\la \in\prt B(\mu_k(\la), R_k)$, we have
  $\phi_k(\la)\in\prt D_\la^k$.  Let $\ell_\la^k$ be the straight line
  tangent to $\partial D_\lambda^k$ at $\phi_k(\la)$ and let
  $\Pi_\la^k$ be the closed half plane bordered by $\ell_\la^k$ that
  contains $D_\la^k$.  Consider the (possibly unbounded) closed convex
  sets
  \begin{equation*}
    G_k=\bigcap_{\la\in \gamma_k} \Pi_\la^k.
  \end{equation*}
  Since $\Om\subset \widehat \C\sm B(\mu_k(\la),R_k)$, we have $
  \phi_k(\Om)\subset D_\la^k\subset \Pi_\la^k$, for any $\la\in
  \gamma_k$.  Therefore $\phi_k(\Om)\subset G_k$.  By
  Lemma~\ref{lem-disks}, the disk $D_\la^k$ and the half plane
  $\Pi_\la^k$ are good for $\phi_k(T)$.  It follows that
  $W(\varphi_k(T)) \subset G_k$.  By the Delyon-Delyon
  theorem~\cite{DelyonDelyon}, $G_k$ is a complete $K$-spectral set
  for $\phi_k(T)$ (see the comments in the \nameref{DelyonDelyon}).

  Next, we consider the Jordan domains
  $\Om_k=\operatorname{int}(\phi_k^{-1}(G_k))$ in the Riemann sphere
  $\widehat\C$.  Each $\Om_k$ contains $\Om$, and its closure is a
  complete $K$-spectral for $T$.  By construction, $\phi_k(\gamma_k)
  \subset \partial G_k$.  Hence, $\gamma_k \subset \partial \Om_k$.
  We wish to apply Theorem~\ref{Hav-Nersess-opers} to the intersection
  of the sets $\Om_k$, $k=1,\ldots,N$.  It may happen however that the
  boundaries of these sets do not intersect transversally.
  Nevertheless, it is possible to choose larger Jordan domains
  $\widetilde{\Om}_k \supset \Om_k$ whose boundaries do intersect
  transversally, and such that $\gamma_k \subset
  \partial \widetilde{\Om}_k$.

  To prove this, it suffices to choose the sets $\widetilde{\Om}_k$ in
  such a way that they intersect transversally at the endpoints of the
  arcs $\gamma_k$, as it is otherwise easy to ensure transversality at
  any other intersection points.  So suppose $\lambda$ is a common
  endpoint of two arcs $\gamma_k$ and $\gamma_l$.  By construction,
  the open disk $\Delta_k = \varphi_k^{-1}(\widehat{\C}\setminus
  \Pi_\lambda^k)$ has the point $\lambda$ on its boundary and does not
  intersect $\Om_k$, and similarly for the disk $\Delta_l$.
  Therefore, there is an open circular sector $S$ with vertex
  $\lambda$ that does not intersect
  $\overline{\Om}_k\cup\overline{\Om}_l$.  Since $\gamma_k$ and
  $\gamma_l$ intersect transversally, we can find disjoint open
  circular sectors $S_k^+$ and $S_l^+$ which are also disjoint with
  $S$ and such that $\gamma_k\cap B(\lambda,\varepsilon) \subset S_k^+
  \cup \{\lambda\}$ and $\gamma_l\cap B(\lambda,\varepsilon) \subset
  S_l^+ \cup \{\lambda\}$ for some $\varepsilon > 0$.  Now observe
  that we can choose disjoint open circular sectors $S_k^-$ and
  $S_l^-$ which are also disjoint from $S,S_k^+,S_l^+$, and then the
  larger sets $\widetilde{\Omega}_k \supset \Omega_k$ and
  $\widetilde{\Omega}_l \supset \Omega_l$ to satisfy
  $(\partial\widetilde{\Omega}_k \setminus \gamma_k)\cap
  B(\lambda,\varepsilon) \subset S_k^-$ and
  $(\partial\widetilde{\Omega}_l \setminus \gamma_l)\cap
  B(\lambda,\varepsilon) \subset S_l^-$.  Consequently,
  $\widetilde{\Omega}_k$ and $\widetilde{\Omega}_l$ intersect
  transversally at $\lambda$.

  Put
  \begin{equation*}
    \wt\Om=\widetilde{\Om}_1\cap \dots \cap \widetilde{\Om}_N.
  \end{equation*}
  By Theorem~\ref{Hav-Nersess-opers}, the closure of $\wt\Om$ is a
  complete $K'$-spectral set for $T$, for some $K'$.  By construction,
  each point $\lambda$ of $\partial\Om$ has a neighborhood $B(\lambda,
  \varepsilon)$ such that $B(\lambda, \varepsilon)\cap \partial\Om=
  B(\lambda, \varepsilon)\cap
  \partial\widetilde\Om$.  Therefore $\wt\Om\sm \Om$ is at a positive
  distance from $\Om$.  Since $\Om\subset \wt\Om$ and
  $\si(T)\subset\overline{\Om}$, it follows that $\overline{\Om}$ also
  is a complete $K''$-spectral set for $T$.
\end{proof}

We recall that a Hilbert space operator $T$ is \emph{hyponormal} if
$T^*T \geq T T^*$.  In this case, the equality
$\|(T-\la)^{-1}\|=1/\operatorname{dist}(\la, \si(T))$ holds for all
$\la\notin \si(T)$; see, for instance the book \cite{MartPutinar} by
Martin and Putinar, Proposition~1.2.  Consequently, we get the
following corollary to Theorem~\ref{Putinar-Sandb-non-conv}.

\begin{corollary}
  \label{cor-hypo}
  Let $T$ be hyponormal and let $\Om\subset\widehat{\C}$ be an open
  set satisfying the hypotheses of
  Theorem~\ref{Putinar-Sandb-non-conv} $($in particular, the exterior
  disc condition$)$ and such that $\sigma(T)\subset\overline{\Omega}$.
  Then $\overline\Om$ is a complete $K$-spectral set for $T$.
\end{corollary}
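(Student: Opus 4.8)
The plan is to deduce the corollary directly from Theorem~\ref{Putinar-Sandb-non-conv}: since $\Om$ is already assumed to satisfy Condition~\ref{condA}, we are handed the closed arcs $\gamma_k$, the radii $R_k$ and the maps $\mu_k:\gamma_k\to\C$ with $B(\mu_k(\la),R_k)$ touching $\Om$ at $\la$, and the only hypothesis of Theorem~\ref{Putinar-Sandb-non-conv} that still needs to be verified is the resolvent bound $\|(T-\mu_k(\la)I)^{-1}\|\le R_k^{-1}$ for every $k=1,\ldots,N$ and every $\la\in\gamma_k$. So the whole proof reduces to establishing that single family of inequalities, after which we simply invoke Theorem~\ref{Putinar-Sandb-non-conv}.

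First I would record a purely topological observation about the exterior disk condition: although the definition only says $B(\mu_k(\la),R_k)\cap\Om=\emptyset$, in fact $B(\mu_k(\la),R_k)\cap\ovl\Om=\emptyset$ as well. Indeed, if a point $z\in\prt\Om$ were to lie in the \emph{open} disk $B(\mu_k(\la),R_k)$, then a small neighborhood of $z$ would be contained in that open disk and would meet $\Om$ (as $z$ is a boundary point of $\Om$), contradicting disjointness with $\Om$. Hence $\operatorname{dist}(\mu_k(\la),\ovl\Om)\ge R_k$, and since $\si(T)\subset\ovl\Om$ we obtain $\operatorname{dist}(\mu_k(\la),\si(T))\ge R_k>0$; in particular $\mu_k(\la)\notin\si(T)$, so $(T-\mu_k(\la)I)^{-1}$ is a bounded operator.

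Next, I would use that $T$ is hyponormal, so the identity $\|(T-\mu I)^{-1}\|=1/\operatorname{dist}(\mu,\si(T))$ recalled just above the corollary (Martin--Putinar, Proposition~1.2) applies with $\mu=\mu_k(\la)$. Combining it with the distance estimate of the previous paragraph gives exactly $\|(T-\mu_k(\la)I)^{-1}\|\le R_k^{-1}$. With this, all hypotheses of Theorem~\ref{Putinar-Sandb-non-conv} hold for $T$ and $\Om$, and that theorem yields that $\ovl\Om$ is a complete $K$-spectral set for $T$. The argument has no genuinely hard step; the only point requiring a moment's care is the upgrade from ``disjoint from $\Om$'' to ``disjoint from $\ovl\Om$'' for the exterior disks, since it is precisely this that lets the spectral inclusion $\si(T)\subset\ovl\Om$ combine with the hyponormal resolvent formula.
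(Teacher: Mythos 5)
Your proposal is correct and matches the paper's (largely implicit) argument: the paper states the corollary immediately after recalling the hyponormal resolvent identity $\|(T-\la)^{-1}\|=1/\operatorname{dist}(\la,\si(T))$ precisely because, combined with $\operatorname{dist}(\mu_k(\la),\ovl\Om)\ge R_k$ and $\si(T)\subset\ovl\Om$, it yields the resolvent bounds required by Theorem~\ref{Putinar-Sandb-non-conv}. Your extra care in upgrading ``disjoint from $\Om$'' to ``disjoint from $\ovl\Om$'' is a correct filling-in of a detail the paper leaves unstated.
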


So in other words, in this situation, $T$ can be dilated to an
operator $S$ which is similar to a normal operator and satisfies
$\si(S)\subset \prt\Om$.  It is interesting to compare
Corollary~\ref{cor-hypo} with Putinar's result \cite{Putinar-subsc}
that every hyponormal operator $T$ is subscalar and can in fact can be
represented as a restriction of a scalar operator $L$ of order $2$ (in
the sense of Colojoara-Foias) to an invariant subspace.  Thus, if
$T=L|H$, where $H$ is an invariant subspace of $L\in B(K)$, then $L$
is a dilation of $T$ of a special kind.  On the other hand, the
spectrum of a scalar operator $L$, as constructed by Putinar, contains
a neighborhood of $\si(T)$.  By contrast, in Corollary~\ref{cor-hypo},
if $\si(T)$ is a closed Jordan domain satisfying the exterior disk
condition, the dilation $S$ of $T$ is a scalar operator of order $0$
and its spectrum is contained in the spectrum of $T$ (and even in its
boundary).

Generally speaking, the conditions of Corollary~\ref{cor-hypo} do not
imply that $\overline\Om$ is a ($1$-)spectral set for $T$; this is
seen from any of the examples by Wadhwa \cite{wadhwa} and Hartman
\cite{Hartman}, where one can put $\overline{\Om}=\sigma(T)$ (it is an
annulus for the Hartman's example and a disjoint union of an annulus
and a disc for Wadhwa's example).  On the other hand, consider the
hyponormal operator from Clancey's example \cite{clancey}; let us call
it $B$.  Its spectrum is a compact subset of $\C$ of positive area,
whose interior is empty.  It is proved in \cite{clancey} that $\si(B)$
is not a $1$-spectral set for $B$.  A modification of Clancey's
arguments also shows that it is not even $K$-spectral for any $K$.
Indeed, by applying \cite{Paulsen}*{Exercise 9.11}, one gets that if
$\si(B)$ were $K$-spectral for $B$, then $B$ would be similar to a
normal operator.  Since $B$ is hyponormal,
\cite{Stampfli-Wadhwa}*{Corollary 1} would then give that $B$ is
normal, which is not true.  So the equality
$\|(T-\la)^{-1}\|=1/\operatorname{dist}\big(\la, \si(T)\big)$
($\la\notin\si(T)$) in general does not imply that $\si(T)$ is a
$K$-spectral set for $T$.

We also refer to \cite{Putinar-BanCenter38}*{Theorem 4} for a result
on subscalarity of operators with a power-like estimate for the
resolvent.

\section{Admissible functions and generators of $A(\overline{\Omega})$}
\label{sec:article1}

In this section we state the results from \cite{article1} needed in
this article.

\begin{theoremarticle}[Theorem 1.5 in \cite{article1}]
  \label{article1-thm1}
  Let $\Phi:\overline{\Omega}\to\overline{\D}^n$ be admissible.  Then
  there exist bounded linear operators $F_k : A(\overline{\Omega}) \to
  A(\overline{\D})$, $k = 1,\ldots,n$ such that the operator in
  $A(\overline{\Omega})$ defined by
  \begin{equation*}
    f \mapsto f - \sum_{k=1} F_k(f) \circ \varphi_k,\qquad f \in
    A(\overline{\Omega}),
  \end{equation*}
  is compact.
\end{theoremarticle}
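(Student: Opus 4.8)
The plan is to construct the $F_k$ from localized Cauchy integrals transported from $\partial\Omega$ to the circle by the maps $\varphi_k$, and then to recognize $f-\sum_k F_k(f)\circ\varphi_k$ as an integral operator with no diagonal singularity.

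I would first fix a smooth partition of unity $\{\chi_k\}_{k=1}^n$ on $\partial\Omega$ subordinate to (a slight enlargement of) the arcs $J_k$, arranged so that near a common endpoint $z_0$ of $J_k$ and $J_l$ only $\chi_k$ and $\chi_l$ are nonzero — the sector condition (d) of admissibility is precisely what makes such a choice, and the subsequent corner analysis, possible. Since $\partial\Omega$ is rectifiable, for $f\in A(\overline\Omega)$ the Cauchy formula $f(z)=\frac1{2\pi i}\int_{\partial\Omega}\frac{f(\zeta)}{\zeta-z}\,d\zeta$ holds on $\Omega$; splitting with the $\chi_k$ gives $f=\sum_k g_k$ on $\Omega$, where $g_k(z)=\frac1{2\pi i}\int_{J_k}\frac{\chi_k(\zeta)f(\zeta)}{\zeta-z}\,d\zeta$. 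On $\operatorname{supp}\chi_k$ the map $\varphi_k$ is an analytic diffeomorphism onto an arc $I_k=\varphi_k(J_k)\subset\T$ (using analyticity of $\varphi_k$ across $J_k$, condition (c), and $|\varphi_k'|\ge C>0$, condition (e)), which suggests associating to $g_k$ the function
\[
 F_k(f)(w)=\frac1{2\pi i}\int_{J_k}\frac{\chi_k(\zeta)f(\zeta)\,\varphi_k'(\zeta)}{\varphi_k(\zeta)-w}\,d\zeta,\qquad w\in\D,
\]
which is analytic in $\D$ and carries, through $F_k(f)(\varphi_k(z))$, the same principal singularity as $g_k(z)$ as $z\to J_k$.

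The key identity is the cancellation
\[
 g_k(z)-F_k(f)(\varphi_k(z))=\frac1{2\pi i}\int_{J_k}\chi_k(\zeta)f(\zeta)\,L_k(z,\zeta)\,d\zeta,\qquad
 L_k(z,\zeta)=\frac1{\zeta-z}-\frac{\varphi_k'(\zeta)}{\varphi_k(\zeta)-\varphi_k(z)}.
\]
Both terms of $L_k$ have a simple pole on the diagonal $z=\zeta$ with the same residue, so $L_k$ extends analytically in $z$ across the diagonal; injectivity of $\varphi_k$ on $J_k$ relative to $\overline\Omega$ (condition (f)) forbids $\varphi_k(\zeta)=\varphi_k(z)$ for $\zeta\in J_k$, $z\in\overline\Omega$, $z\neq\zeta$; hence $L_k$ is bounded and jointly continuous on $\overline\Omega\times J_k$ (even H\"older in $\zeta$, by the H\"older bound on $\varphi_k'$ in (c)). Therefore $\mathcal C_k\colon f\mapsto\bigl(z\mapsto\frac1{2\pi i}\int_{J_k}\chi_k f\,L_k\,d\zeta\bigr)$ factors through the restriction $A(\overline\Omega)\to C(J_k)$ followed by an integral operator with continuous kernel $C(J_k)\to C(\overline\Omega)$, the latter compact by Arzel\`a--Ascoli; its values are analytic on $\Omega$, so $\mathcal C_k$ is compact on $A(\overline\Omega)$. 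As $f=\sum_k g_k$ on $\Omega$ and both sides belong to $A(\overline\Omega)$, the identity $f-\sum_k F_k(f)\circ\varphi_k=\sum_k\mathcal C_k f$ holds on $\overline\Omega$, so the operator in the statement is compact — provided each $F_k$ is a bounded operator into $A(\overline\D)$.

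That proviso is where I expect the real difficulty to lie. After the substitution $u=\varphi_k(\zeta)$, the formula for $F_k(f)$ is the Cauchy transform on $\D$ of the density $(\chi_k f)\circ\varphi_k^{-1}$ on the arc $I_k\subset\T$; since the Cauchy (Riesz) projection is unbounded on $C(\T)$, for a merely continuous $f$ this need neither extend continuously to $\overline\D$ nor be controlled by $\|f\|_{A(\overline\Omega)}$. The remaining regularity built into admissibility must be spent here: one should replace the sharp localization above by a regularized one — for instance a Vitushkin-type localization operator, or a $\bar\partial$-corrected cutoff, adapted to the analytic arcs $J_k$ — whose boundedness on $A(\overline\Omega)$ and into $A(\overline\D)$ is guaranteed by the H\"older-$\alpha$ control of $\varphi_k'$, the lower bound $|\varphi_k'|\ge C$, and the transversal sector geometry at the endpoints of $J_k$, while changing $g_k-F_k(f)\circ\varphi_k$ only by a further operator with continuous kernel. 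Establishing that the correction terms remain compact — most delicately in the overlap regions near the corners of $\partial\Omega$, where two arcs and two cutoffs interact and conditions (d), (e), (f) are used together — is the technical heart of the proof; once it is in place, summing over $k$ and invoking the density of $\Rat(\overline\Omega)$ in $A(\overline\Omega)$ to extend the $F_k$ by continuity completes the argument.
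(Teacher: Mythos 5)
This statement is not proved in the present paper at all: it is imported verbatim as Theorem~1.5 of \cite{article1}, so there is no in-house argument to compare against. Judged on its own terms, your proposal has the right skeleton --- localize the Cauchy integral of $f$ over the arcs $J_k$, transport each piece to the disk via $\varphi_k$, and observe that the kernel $L_k(z,\zeta)=\frac{1}{\zeta-z}-\frac{\varphi_k'(\zeta)}{\varphi_k(\zeta)-\varphi_k(z)}$ has its diagonal singularity cancelled (using (c), (e), (f)), leaving an integral operator with bounded kernel, hence compact. That cancellation computation is correct and is indeed the mechanism by which the error term becomes compact.

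The gap is the one you yourself flag and then defer: as written, your $F_k$ is \emph{not} a bounded operator from $A(\overline{\Omega})$ into $A(\overline{\D})$. After the substitution $u=\varphi_k(\zeta)$ it is the Cauchy transform of the merely continuous density $(\chi_k f)\circ\varphi_k^{-1}$ on an arc of $\T$, and since the Riesz projection is unbounded on $C(\T)$ this neither lands in $A(\overline{\D})$ nor is controlled by $\|f\|_{A(\overline{\Omega})}$; for the same reason your intermediate decomposition $f=\sum_k g_k$ does not produce bounded summands. This is not a technicality to be patched by smoothing the cutoff $\chi_k$: a Vitushkin-type or $\bar\partial$-corrected localization still involves the Cauchy transform of a continuous (not H\"older) density and runs into the same obstruction. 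The actual construction (following Havin--Nersessian, as in \cite{article1}) replaces the Cauchy kernel by a \emph{modified} kernel, multiplying $\frac{1}{\zeta-z}$ by correction factors of the form $b(z)/b(\zeta)$ built from the $\varphi_k$ themselves, chosen so that the resulting localized operators are bounded on the uniform algebras while perturbing the identity only by operators with bounded kernels; the delicate corner analysis via conditions (d)--(f) is then carried out for these modified kernels. Without producing such bounded $F_k$ explicitly, the proof is incomplete precisely at the point where the conclusion (existence of \emph{bounded} operators $F_k$) is asserted.
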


If $\Phi = (\varphi_1,\ldots,\varphi_n) : \overline{\Omega} \to
\overline{\D}^n$, we define the algebra $\mathcal{A}_\Phi$ to be the
(not necessarily closed) subalgebra of $A(\overline{\Omega})$
generated by the functions of the form $f \circ \varphi_k$, with $f
\in A(\overline{\D})$ and $k = 1,\ldots,n$.  Explicitly,
\begin{equation*}
\mathcal{A}_\Phi = \bigg\{ \sum_{j=1}^{N}
f_{j,1}(\varphi_1(z))\cdot\,\ldots\,\cdot f_{j,n}(\varphi_n(z)) \ :\
f_{j,k} \in A(\overline{\D}),\ N \in \N \bigg\}.
\end{equation*}

\begin{theoremarticle}[Theorem 1.1 in \cite{article1}]
  \label{article1-thm2}
  Let $\Phi:\overline{\Omega}\to\overline{\D}^n$ be admissible and
  injective, and such that $\Phi'$ does not vanish on $\Omega$.  Then
  $\mathcal{A}_\Phi = A(\overline{\Omega})$.
\end{theoremarticle}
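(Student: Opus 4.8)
The plan is to derive the identity $\mathcal{A}_\Phi = A(\overline{\Omega})$ from Theorem~\ref{article1-thm1} together with an analysis of the finite-codimensional ideals of $A(\overline{\Omega})$. \emph{Step 1 (finite codimension).} By Theorem~\ref{article1-thm1} there are bounded operators $F_k : A(\overline{\Omega}) \to A(\overline{\D})$, $k=1,\dots,n$, such that the operator $R$ on $A(\overline{\Omega})$ given by $Rf = f - \sum_{k=1}^n F_k(f)\circ\varphi_k$ is compact. Write $L = \id - R$, so that $Lf = \sum_{k=1}^n F_k(f)\circ\varphi_k$. Each summand $F_k(f)\circ\varphi_k$ is a generator of $\mathcal{A}_\Phi$, hence $\operatorname{ran} L \subseteq \mathcal{A}_\Phi$. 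Since $R$ is compact, $L=\id-R$ is Fredholm of index $0$, so $\operatorname{ran} L$ is closed and of finite codimension. Consequently $\mathcal{A}_\Phi$ is a \emph{closed} unital subalgebra of $A(\overline{\Omega})$ of finite codimension (any subspace between a closed finite-codimensional subspace and the whole space is itself closed).

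\emph{Step 2 (a finite-codimensional ideal inside $\mathcal{A}_\Phi$).} I would then use the standard algebraic fact that a unital subalgebra $B$ of finite codimension in a commutative unital algebra $A$ contains an ideal of $A$ of finite codimension: the finite-dimensional quotient $A/B$ is a $B$-module, its annihilator $I_0 = \{b \in B : bA \subseteq B\}$ has finite codimension in $B$ (as $B/I_0$ embeds in $\operatorname{End}(A/B)$), and $I_0$ is in fact an ideal of $A$, since for $b \in I_0$ and $a \in A$ one has $(ab)A = b(aA) \subseteq bA \subseteq B$. Taking $I=\overline{I_0}$ we obtain a closed ideal of $A(\overline{\Omega})$ of finite codimension with $I \subseteq \mathcal{A}_\Phi$.

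\emph{Step 3 (identifying the quotient).} The algebra $A(\overline{\Omega})/I$ is finite-dimensional and commutative, hence a direct sum $\bigoplus_{j=1}^r A_j$ of local algebras $A_j = \C \oplus \mathfrak m_j$ with $\mathfrak m_j$ nilpotent, indexed by the finitely many points $p_1,\dots,p_r$ of the hull of $I$ in $\overline{\Omega}$. For an interior point $p_j$ one has $A_j \cong \C[\epsilon]/(\epsilon^{m_j})$ for some $m_j \ge 1$, while for a boundary point $A_j \cong \C$, because a closed ideal of $A(\overline{\Omega})$ cannot force vanishing of order greater than one at a point of $\partial\Omega$ (already in the disc algebra the closed ideal generated by $(z-1)^2$ equals $\{f : f(1)=0\}$, and the general statement is local). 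Now $\mathcal{A}_\Phi/I$ is a unital subalgebra of $\bigoplus_j A_j$. Injectivity of $\Phi$ gives that $\varphi_1,\dots,\varphi_n\in\mathcal{A}_\Phi$ separate $p_1,\dots,p_r$, so $\mathcal{A}_\Phi/I$ separates the maximal ideals of $A(\overline{\Omega})/I$; passing to the semisimplification $\bigoplus_j \C$ one sees that $\mathcal{A}_\Phi/I$ surjects onto $\C^r$, and since idempotents are recovered by polynomial interpolation at the spectrum inside a finite-dimensional algebra, $\mathcal{A}_\Phi/I$ contains the central idempotents $e_j$. Thus $\mathcal{A}_\Phi/I = \bigoplus_j (\mathcal{A}_\Phi/I)e_j$ with $(\mathcal{A}_\Phi/I)e_j$ a unital subalgebra of $A_j$. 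Finally, for an interior $p_j$ the hypothesis that $\Phi'$ does not vanish produces some $k$ with $\varphi_k'(p_j)\neq 0$, so $\varphi_k - \varphi_k(p_j) \in \mathcal{A}_\Phi$ has a simple zero at $p_j$ and its class generates $A_j \cong \C[\epsilon]/(\epsilon^{m_j})$ as a unital algebra; for a boundary $p_j$ there is nothing to prove since $A_j = \C$. Hence $(\mathcal{A}_\Phi/I)e_j = A_j$ for every $j$, so $\mathcal{A}_\Phi/I = A(\overline{\Omega})/I$ and therefore $\mathcal{A}_\Phi = A(\overline{\Omega})$.

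\emph{Main obstacle.} Steps 1 and 2 and the idempotent extraction are routine. The delicate point is the description of closed finite-codimensional ideals of $A(\overline{\Omega})$ used in Step 3, in particular the claim that only simple vanishing can be imposed on $\partial\Omega$; this must be argued locally and rests on a Rudin--Carleson/peak-interpolation type input adapted to the piecewise-analytic boundary (including the corners) of an admissible domain. One also has to check that the regularity built into the notion of an admissible function guarantees the structural facts about $A(\overline{\Omega})$ being invoked (density of $\Rat(\overline{\Omega})$, identification of the maximal ideal space with $\overline{\Omega}$, and the behaviour of closed ideals).
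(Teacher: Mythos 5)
First, a caveat about the comparison you are asking for: the present paper does not prove this statement at all. It is quoted verbatim as Theorem~1.1 of the companion article \cite{article1}, so there is no in-paper argument to measure your proposal against; I can only judge the proposal on its own terms. Your overall strategy is sound and is essentially the natural way to derive Theorem~\ref{article1-thm2} from Theorem~\ref{article1-thm1}: the Fredholm argument in Step~1 correctly shows that $\mathcal{A}_\Phi$ contains the closed finite-codimensional subspace $\operatorname{ran}(\id - R)$ and hence is itself closed and of finite codimension; the conductor-ideal argument in Step~2 is the standard one and is complete as written; and the interior-point analysis in Step~3 is correct, since at an interior point $p$ the division $f \mapsto f/(z-p)$ preserves $A(\overline{\Omega})$, so the local factor is $\C[\epsilon]/(\epsilon^{m})$ with $\epsilon$ the class of $z-p$, and the class of $\varphi_k - \varphi_k(p)$ is a unit times $\epsilon$ when $\varphi_k'(p)\neq 0$, hence generates. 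The extraction of the idempotents from the separation property is also fine.

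The one genuine gap is exactly the one you flag and do not close: the claim that a closed finite-codimensional ideal of $A(\overline{\Omega})$ can impose only first-order vanishing at a point of $\partial\Omega$, equivalently that $A(\overline{\Omega})$ admits no nonzero bounded point derivations at boundary points. For the disc algebra this is elementary ($D(z^n)=nD(z)$ while $\|z^n\|=1$ forces $D(z)=0$), but for an admissible $\Omega$ it must actually be proved, and the corners are where the work lies: the natural argument tests $D$ against powers of $p_\lambda=(z-\lambda)^{-1}$ with $\lambda\notin\overline{\Omega}$ chosen so that $|p_\lambda|$ peaks at the given boundary point, and the existence of such $\lambda$ arbitrarily close to a corner is a geometric statement about the exterior of $\Omega$ there (this is precisely where the hypothesis that interior angles lie in $(0,\pi]$ should enter); one must then also pass from $D(p_\lambda)=0$ for these special $\lambda$ to $D=0$ on a dense subalgebra. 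Note that admissibility condition~(e) gives $|\varphi_k'|\geq C>0$ on the arc $J_k$ through any boundary point, so one could instead try to show directly that the class of $\varphi_k-\varphi_k(p)$ generates the local factor at a boundary $p$; but that again presupposes knowing that the maximal ideal of that local factor is principal and generated by something behaving like $z-p$, i.e., the same local structure of the quotient. So the approach is not wrong, but Step~3 is incomplete until this boundary analysis is supplied, and it is the only step that is not routine.
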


\begin{lemmaarticle}
  \label{article1-lemma}[Lemma 8.1 in \cite{article1}]
  Let
  $\Phi_\varepsilon= \big(\varphi_1^\varepsilon, \dots,
  \varphi_n^\varepsilon\big): \overline{\Omega}\to\overline{\D}^n$,
  $0\leq\varepsilon\leq\varepsilon_0$ be a collection of functions.
  Assume that $\Psi_\varepsilon$ is admissible for every
  $\varepsilon$, and, moreover, that one can choose sets $\Omega_k$ in
  the definition of an admissible collection that do not depend on
  $\varepsilon$.
  Assume that $\varphi_k^\varepsilon \in \Cont^{1+\alpha}(\Omega_k)$,
  with $0 < \alpha < 1$, and that the mapping $\varepsilon \mapsto
  \varphi_k^\varepsilon$ is continuous from $[0,\varepsilon_0]$ to
  $\Cont^{1+\alpha}(\Omega_k)$.

  Then for $0\leq\varepsilon\leq \varepsilon_0$, there exist bounded
  linear operators $F_k^\varepsilon : A(\overline{\Omega}) \to
  A(\overline{\D})$, such that if
  \begin{equation*}
  L_\varepsilon(f) = \sum F_k^\varepsilon(f)\circ\varphi_k^\varepsilon
  \end{equation*}
  then $L_\varepsilon - I$ is a compact operator on
  $A(\overline{\Omega})$ for all $\varepsilon$, the mapping
  $\varepsilon \mapsto L_\varepsilon$ is continuous in the norm
  topology, and $\|F_k^\varepsilon\| \leq C$ for $k = 1,\ldots,n$,
  where $C$ is some constant independent of $k$ and~$\varepsilon$.
\end{lemmaarticle}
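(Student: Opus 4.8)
The plan is to reopen the proof of Theorem~\ref{article1-thm1} and to check that the operators $F_k$ produced there can be chosen to depend continuously on the functions $\varphi_k$, so that feeding in $\varphi_k^\varepsilon$ yields the required family. Recall that in that proof each $F_k$ is built by an \emph{explicit} recipe out of the map $\varphi_k$ together with a fixed package of auxiliary data: the domain $\Omega$, its boundary, the arcs $J_k$ on which $|\varphi_k|=1$, a smooth partition of unity $\{\chi_k\}$ near $\partial\Omega$ subordinate to neighborhoods of the $J_k$, and the cutoff/sector data near the endpoints of the $J_k$. The key analytic object is the Cauchy-type kernel
\[
  N_k^\varepsilon(\zeta,z) \;=\; \frac{(\varphi_k^\varepsilon)'(\zeta)}{\varphi_k^\varepsilon(\zeta)-\varphi_k^\varepsilon(z)} \;-\; \frac{1}{\zeta-z},
\]
which, thanks to conditions (c), (e) and (f) of admissibility, is a weakly singular (integrable) kernel on $J_k\times\overline{\Omega}$, of order $|\zeta-z|^{\alpha-1}$; the associated integral operator is compact, and $F_k^\varepsilon(f)$ is, up to this compact correction and up to the change of variables $w=\varphi_k^\varepsilon(\zeta)$, the Cauchy transform over the arc $\varphi_k^\varepsilon(J_k)\subset\T$ of the density $(\chi_k f)\circ(\varphi_k^\varepsilon)^{-1}$. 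Using the hypothesis that the sets $\Omega_k$ do not depend on $\varepsilon$, all the auxiliary data above may be, and are, chosen independently of $\varepsilon$; we then \emph{define} $F_k^\varepsilon$ and $L_\varepsilon(f)=\sum_k F_k^\varepsilon(f)\circ\varphi_k^\varepsilon$ by running the same recipe with $\varphi_k$ replaced by $\varphi_k^\varepsilon$.

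With this choice, the statement that $L_\varepsilon-I$ is compact on $A(\overline{\Omega})$ for each fixed $\varepsilon$ is exactly the conclusion of Theorem~\ref{article1-thm1} applied to the admissible family $\Phi_\varepsilon$ (the proof of that theorem produces precisely the $F_k^\varepsilon$ just described). The remaining two assertions — norm continuity of $\varepsilon\mapsto L_\varepsilon$ and the uniform bound $\|F_k^\varepsilon\|\le C$ — follow by inspecting how the recipe reacts to perturbing $\varphi_k^\varepsilon$ in $C^{1+\alpha}(\Omega_k)$. The parts of the recipe that move with $\varepsilon$ are: the weakly singular kernels $N_k^\varepsilon$; the change of variables $w=\varphi_k^\varepsilon(\zeta)$ together with the arcs $\varphi_k^\varepsilon(J_k)$ and the Cauchy integral along them; and the outer composition with $\varphi_k^\varepsilon$. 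Each depends continuously on $\varphi_k^\varepsilon\in C^{1+\alpha}(\Omega_k)$ — for the weakly singular operators this means convergence in operator norm, which follows from the explicit $|\zeta-z|^{\alpha-1}$ bound and dominated convergence; for the Cauchy integrals one uses the standard continuity of these operators under $C^{1+\alpha}$ perturbations of contour and density. Reassembling the estimates as in \cite{article1} gives that $\varepsilon\mapsto F_k^\varepsilon$, and hence $\varepsilon\mapsto L_\varepsilon$, is continuous from $[0,\varepsilon_0]$ into the bounded operators with the norm topology. Once this is known, $\varepsilon\mapsto\|F_k^\varepsilon\|$ is a continuous real-valued function on the compact interval $[0,\varepsilon_0]$, hence bounded, and $C:=\max_k\sup_\varepsilon\|F_k^\varepsilon\|$ works.

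The step I expect to be the main obstacle is making all of this \emph{uniform in $\varepsilon$} rather than merely controlled near each individual $\varepsilon$. Several constants entering the estimates in the proof of Theorem~\ref{article1-thm1} must be bounded above or below uniformly over $\varepsilon\in[0,\varepsilon_0]$: the H\"older-$\alpha$ norms of $(\varphi_k^\varepsilon)'$ on $\Omega_k$; the lower bound $|(\varphi_k^\varepsilon)'|\ge c>0$ on $J_k$ from condition (e); the geometry near the endpoints of the $J_k$ governed by condition (d); and, most delicately, the separation
\[
  \delta_k^\varepsilon \;=\; \inf\Bigl\{\,\frac{|\varphi_k^\varepsilon(\zeta)-\varphi_k^\varepsilon(z)|}{|\zeta-z|}\ :\ \zeta\in J_k,\ z\in\overline{\Omega},\ z\ne\zeta\,\Bigr\}\;>\;0,
\]
which encodes condition (f) and controls the singular behaviour of $N_k^\varepsilon$. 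By the continuity of $\varepsilon\mapsto\varphi_k^\varepsilon$ in $C^{1+\alpha}(\Omega_k)$, each of these is a continuous (respectively, lower or upper semicontinuous) function of $\varepsilon$, so compactness of $[0,\varepsilon_0]$ yields the uniform bounds; verifying the semicontinuity of $\delta_k^\varepsilon$ — especially the behaviour as $z\to\zeta$ along $J_k$ and near the corners, where one must combine the local analysis of \cite{article1} with the $C^{1+\alpha}$ convergence — is the technical heart. A secondary subtlety is the norm continuity of $\varepsilon\mapsto L_\varepsilon$ at the level of the outer composition $g\mapsto g\circ\varphi_k^\varepsilon$: since $F_k^\varepsilon$ is built from Cauchy-type integrals, one should check that the functions $F_k^\varepsilon(f)$ enjoy, uniformly for $f$ in the unit ball of $A(\overline{\Omega})$, a modulus of continuity good enough that replacing $\varphi_k^\varepsilon$ by a nearby $\varphi_k^{\varepsilon'}$ perturbs $F_k^\varepsilon(f)\circ\varphi_k^\varepsilon$ by an amount small uniformly in $f$, and this point should be treated with some care.
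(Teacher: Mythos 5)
The paper does not prove this statement: it is quoted verbatim (as Lemma~8.1) from the companion article \cite{article1}, and Section~\ref{sec:article1} explicitly only \emph{states} the results imported from there. So there is no in-paper proof to compare your argument against; the comparison has to be with the proof in \cite{article1}. Your overall strategy --- rerun the construction behind Theorem~\ref{article1-thm1} with the $\varepsilon$-dependent data, use the hypothesis that the sets $\Omega_k$ are fixed to keep all auxiliary choices (arcs, cutoffs, sectors, partitions of unity) independent of $\varepsilon$, and then extract the uniform bound $\|F_k^\varepsilon\|\le C$ from continuity on the compact interval $[0,\varepsilon_0]$ --- is the right one and is consistent with how such a parametric refinement is obtained there.

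However, as a proof the proposal is incomplete, and you say so yourself. The two steps you defer are precisely the ones carrying the content of the lemma. First, the uniform positive lower bound on the separation quantity $\delta_k^\varepsilon$ (together with uniform H\"older and nondegeneracy constants) is what makes the weakly singular kernels $N_k^\varepsilon$ uniformly integrable and the operator norms uniformly controlled; asserting that $\delta_k^\varepsilon$ is ``semicontinuous in $\varepsilon$'' requires an actual argument both near the diagonal $z\to\zeta$ and near the corners, where $C^{1+\alpha}$ convergence must be converted into a quantitative injectivity estimate --- this is not automatic, since condition (f) is a global injectivity statement and the infimum defining $\delta_k^\varepsilon$ may a priori be approached at pairs of points far from the diagonal. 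Second, norm continuity of $\varepsilon\mapsto L_\varepsilon$ cannot be obtained by composing norm-continuous pieces: the composition map $g\mapsto g\circ\varphi_k^\varepsilon$ is \emph{not} norm-continuous in $\varepsilon$ as an operator on all of $A(\overline{\D})$, so one genuinely needs a uniform modulus of continuity (e.g.\ a uniform H\"older estimate) for the family $\{F_k^\varepsilon(f): \|f\|\le 1,\ 0\le\varepsilon\le\varepsilon_0\}$ in $A(\overline{\D})$ before the outer composition can be perturbed. You correctly flag both points as ``the technical heart'' and ``a subtlety to be treated with care,'' but you do not supply the estimates, so what you have is a correct plan rather than a proof.
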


\section{Auxiliary lemmas}
\label{lemmas}

In this section we state and prove the lemmas that are needed in the
proof of the Theorems~\ref{main} and~\ref{main2}.

\begin{lemma}
  \label{lemma-complement-polynomials}
  Let $X$ be a closed subspace of finite codimension $r$ in a Banach
  space $V$ and $Y$ a $($not necessarily closed$)$ subspace of $V$
  such that $X + Y = V$.  Then there exist vectors $g_1,\ldots,g_r \in
  Y$ such that $Z = \operatorname{span}\{g_1,\ldots,g_r\}$ is a
  complement of $X$; that is, $V = X \dotplus Z$, and there are
  functionals $\alpha_1,\ldots,\alpha_r \in V^*$ such that
  \begin{equation*}
    G(f) \overset{\text{def}}{=} f - \sum \alpha_k(f)g_k
  \end{equation*}
  is the projection of $V$ onto $X$ parallel to $Z$.
\end{lemma}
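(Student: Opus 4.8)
The plan is to reduce the statement to linear algebra in the finite-dimensional quotient $V/X$, exploiting that every linear map out of a finite-dimensional normed space is automatically bounded. First I would let $\pi\colon V\to V/X$ denote the quotient map; since $X$ is closed, $V/X$ is an $r$-dimensional Banach space and $\pi$ is bounded. The hypothesis $X+Y=V$ forces $\pi(Y)=\pi(V)=V/X$, so I can choose $g_1,\dots,g_r\in Y$ whose images $\pi(g_1),\dots,\pi(g_r)$ form a basis of $V/X$. Putting $Z=\operatorname{span}\{g_1,\dots,g_r\}$, the restriction $\pi|_Z$ is then a linear isomorphism onto $V/X$; this gives $Z\cap X=\ker(\pi|_Z)=\{0\}$ and, from $\pi(X+Z)=\pi(Z)=V/X$, also $X+Z=V$. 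Hence $V=X\dotplus Z$, so $Z$ is a complement of $X$ as required.

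Next I would build the functionals. Let $q\colon V/X\to\C^r$ be the coordinate isomorphism associated with the basis $(\pi(g_1),\dots,\pi(g_r))$; being a linear map between finite-dimensional normed spaces it is bounded, and so is $q\circ\pi\colon V\to\C^r$. Define $\alpha_k\in V^*$ to be the $k$-th coordinate of $q\circ\pi$, so that $\alpha_k(g_j)=\delta_{kj}$. For arbitrary $f\in V$ we have $\pi\bigl(f-\sum_k\alpha_k(f)g_k\bigr)=\pi(f)-\sum_k\alpha_k(f)\pi(g_k)=0$, so $G(f):=f-\sum_k\alpha_k(f)g_k$ lies in $X$; if $f\in X$ then $\pi(f)=0$, hence every $\alpha_k(f)=0$ and $G(f)=f$; and $G(g_j)=g_j-\sum_k\alpha_k(g_j)g_k=0$ for each $j$. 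Since $V=X\dotplus Z$, a linear map fixing $X$ pointwise and annihilating $Z$ must be the projection of $V$ onto $X$ parallel to $Z$, so $G$ is that projection; it is bounded, being the identity minus the bounded finite-rank operator $f\mapsto\sum_k\alpha_k(f)g_k$.

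I do not expect a genuine obstacle here: the content is elementary. The only point worth emphasizing is that the boundedness of $\pi$ (which uses that $X$ is closed) together with the finite dimensionality of $V/X$ delivers for free both the complementation of $X$ by the finite-dimensional space $Z$ and the continuity of the coordinate functionals $\alpha_k$, so that no Hahn--Banach extension is needed — the $\alpha_k$ are constructed directly on all of $V$.
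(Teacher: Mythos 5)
Your proof is correct and follows essentially the same route as the paper's: pass to the quotient $V/X$, use $\pi(Y)=V/X$ to pick $g_1,\dots,g_r\in Y$ projecting to a basis, and read off the functionals from the coordinates in $V/X$. The paper leaves the construction of the $\alpha_k$ as "now clear"; your explicit description via $q\circ\pi$ is exactly the intended argument.
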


\begin{proof}
  Let $\pi : V \to V/X$ be the natural projection onto the quotient.
  Then $\pi(Y) = \pi(X+Y) = V/X$.  We can therefore choose vectors
  $g_1,\ldots,g_r \in Y$ such that $\{\pi(g_1),\ldots,\pi(g_r)\}$ is a
  basis of $V/X$.  It follows that $Z =
  \operatorname{span}\{g_1,\ldots,g_r\}$ is a complement of $X$ in
  $V$.  The existence of the functionals $\alpha_1,\ldots,\alpha_r$ is
  now clear.
\end{proof}

Note that, since $X+Y$ is always closed, the hypotheses of the lemma
in particular hold in the case when $Y$ is a dense subspace of $V$.

The next lemma roughly says that to prove von Neumann's inequality
with a constant, it is enough to prove it only for rational functions
in a space of finite codimension.

\begin{lemma}
  \label{lemma-finite-codim}
  Let $T \in \B(H)$ be such that for all $s \geq 1$,
  \begin{equation}
    \label{eq:dm-**}
    \|f(T)\| \leq C \|f\|_{A(\overline{\Omega})\otimes M_s},\qquad
    \forall f \in (X \cap \Rat(\overline{\Omega}))\otimes M_s,
  \end{equation}
  where $X$ is some closed subspace of finite codimension in
  $A(\overline{\Omega})$.  Then $\overline{\Omega}$ is a complete
  $K$-spectral set for $T$, where $K$ depends only on $X$, $C$ and
  $S_\Lambda(T)$, where $\Lambda$ is an arbitrary pole set for
  $\Omega$.
\end{lemma}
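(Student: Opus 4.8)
The plan is to use the estimate \eqref{eq:dm-**} --- a priori only available on the finite-codimension subspace $X$ --- to control $\|f(T)\|$ for \emph{every} matrix-valued rational function $f$ with poles off $\overline{\Om}$, which is precisely the content of complete $K$-spectrality. The mechanism for passing from $X$ to everything is Lemma~\ref{lemma-complement-polynomials}, which I would apply with $V = A(\overline{\Om})$ and $Y$ the unital subalgebra $\mathcal{A}_\Lambda \subset A(\overline{\Om})$ generated by the functions $p_\lambda$, $\lambda \in \Lambda$; explicitly, $\mathcal{A}_\Lambda$ is the linear span of the constants, the functions $(z-\lambda)^{-j}$ for $\lambda\in\Lambda\sm\{\infty\}$ and $j\ge 1$, and the monomials $z^j$ ($j\ge1$) in case $\infty\in\Lambda$. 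First I would check that $\mathcal{A}_\Lambda$ is dense in $A(\overline{\Om})$: this is the pole-pushing step in the proof of Runge's theorem --- every pole of a function in $\Rat(\overline{\Om})$ can be moved, with uniform approximation on $\overline{\Om}$, to the distinguished point of $\Lambda$ lying in its component of $\widehat{\C}\sm\overline{\Om}$ --- together with the density of $\Rat(\overline{\Om})$ in $A(\overline{\Om})$, which holds since $\Om$ is finitely connected. Because $X$ is closed of finite codimension $r$, density of $\mathcal{A}_\Lambda$ forces $X + \mathcal{A}_\Lambda = A(\overline{\Om})$, so Lemma~\ref{lemma-complement-polynomials} yields $g_1,\dots,g_r \in \mathcal{A}_\Lambda$ whose span $Z$ is a complement of $X$ and functionals $\alpha_1,\dots,\alpha_r \in A(\overline{\Om})^*$ such that $G(f) = f - \sum_k \alpha_k(f)g_k$ is the projection of $A(\overline{\Om})$ onto $X$ along $Z$. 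Since the $g_k$ are rational, $G$ maps $\Rat(\overline{\Om})$ into $X \cap \Rat(\overline{\Om})$.

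Next I would fix $s \ge 1$ and a rational $f \in \Rat(\overline{\Om}) \otimes M_s$ and apply $G$ entrywise to obtain
\begin{equation*}
  f = (G \otimes \id_s)(f) + \sum_{k=1}^r g_k\, M_k, \qquad M_k := (\alpha_k \otimes \id_s)(f) \in M_s,
\end{equation*}
with $(G\otimes\id_s)(f) \in (X \cap \Rat(\overline{\Om})) \otimes M_s$. Plugging this into the Cauchy--Riesz functional calculus of $T$ (all functions here are rational with poles off $\overline{\Om} \supset \si(T)$) and applying \eqref{eq:dm-**} to the first summand gives
\begin{equation*}
  \|f(T)\|_{\B(H\otimes\C^s)} \le C\,\big\|(G\otimes\id_s)(f)\big\|_{A(\overline{\Om})\otimes M_s} + \sum_{k=1}^r \|g_k(T)\|\, \|M_k\|_{M_s}.
\end{equation*}

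Then I would estimate the three factors. Lemma~\ref{lemma-cb} delivers the two completely bounded bounds for free, since $G : A(\overline{\Om}) \to A(\overline{\Om})$ and each $\alpha_k \in A(\overline{\Om})^*$ have commutative range: hence $\|(G\otimes\id_s)(f)\|_{A(\overline{\Om})\otimes M_s} \le \|G\|\,\|f\|$ and $\|M_k\|_{M_s} \le \|\alpha_k\|\,\|f\|$, where $\|f\| = \|f\|_{A(\overline{\Om})\otimes M_s}$. For the last factor, the whole point of choosing $g_k \in \mathcal{A}_\Lambda$ is that $g_k(T)$ is then a fixed polynomial in the (mutually commuting) operators $p_\lambda(T)$, $\lambda\in\Lambda$, and $I$, so that $\|g_k(T)\|$ is bounded by a quantity depending only on $g_k$ and on $S_\Lambda(T) = \max_{\lambda\in\Lambda}\|p_\lambda(T)\|$. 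Assembling these,
\begin{equation*}
  \|f(T)\| \le \Big( C\|G\| + \sum_{k=1}^r \|g_k(T)\|\,\|\alpha_k\| \Big)\, \|f\|_{A(\overline{\Om})\otimes M_s} =: K\, \|f\|_{A(\overline{\Om})\otimes M_s},
\end{equation*}
where $K$ depends only on $X$ and $\Lambda$ (through the fixed data $G$, $g_k$, $\alpha_k$), on $C$, and on $S_\Lambda(T)$. Since this holds for all $s$ and all $f \in \Rat(\overline{\Om}) \otimes M_s$, we conclude that $\overline{\Om}$ is a complete $K$-spectral set for $T$.

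The step I expect to be the main obstacle is insisting that the complement vectors $g_k$ lie \emph{in} $\mathcal{A}_\Lambda$. If one only took $g_k$ in an arbitrary dense subspace of $A(\overline{\Om})$, then $\|g_k(T)\|$ would be governed by the resolvent of $T$ at all the poles of $g_k$, and a pole can be pushed arbitrarily close to $\si(T)$ inside a component of $\widehat{\C}\sm\overline{\Om}$, so $\|g_k(T)\|$ would not be controlled by $S_\Lambda(T)$ alone; restricting to $\mathcal{A}_\Lambda$ is precisely what repairs this. The density of $\mathcal{A}_\Lambda$ in $A(\overline{\Om})$, needed to run Lemma~\ref{lemma-complement-polynomials} with this $Y$, is routine but is the one place where finite connectivity of $\Om$ enters.
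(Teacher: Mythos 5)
Your proposal is correct and follows essentially the same route as the paper: both apply Lemma~\ref{lemma-complement-polynomials} with the dense subspace of rational functions with poles restricted to $\Lambda$ (your $\mathcal{A}_\Lambda$ is the paper's $\Rat_\Lambda$), bound $\|g_k(T)\|$ via $S_\Lambda(T)$, and invoke Lemma~\ref{lemma-cb} for the complete boundedness of $G$ and the $\alpha_k$. Your explicit insistence that the $g_k$ be taken in $\mathcal{A}_\Lambda$, with the accompanying explanation of why an arbitrary dense $Y$ would fail, is exactly the key point, and in fact states more carefully what the paper's proof does implicitly.
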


\begin{proof}
  Fix a pole set $\Lambda$ for $\Omega$.  Denote by $\Rat_\Lambda$ the
  set of rational functions with poles in $\Lambda$.  Note that
  $\Rat_\Lambda$ is dense in $A(\overline{\Omega})$.  Hence, we apply
  Lemma~\ref{lemma-complement-polynomials} with $V =
  A(\overline{\Omega})$, $Y = \Rat(\overline{\Omega})$ to obtain
  functions $g_1,\ldots,g_r \in \Rat_\Lambda$, functionals
  $\alpha_1,\ldots,\alpha_r\in A(\overline{\Omega})^*$, and an
  operator $G:A(\overline{\Omega}) \to A(\overline{\Omega})$ as in the
  statement of that lemma.

  We can write
  \begin{equation*}
    g_k(z) = c_0 + \sum_{\lambda \in \Lambda}\sum_{j=1}^N
    c_{\lambda,j,k} p_\lambda(z)^j
  \end{equation*}
  for suitable coefficients $c_{\lambda,j,k}$.  (Recall that
  $p_\lambda(z) = (z-\lambda)^{-1}$ for $\lambda\neq\infty$, and
  $p_\infty(z) = z$.)  This shows that for $k=1\ldots,r$, $\|g_k(T)\|
  \leq K'$, where $K'$ is a constant depending only on
  $g_1,\ldots,g_r$ and $S_\Lambda(T)$, but not on $T$.

  Let $f \in \Rat(\overline{\Omega}) \otimes M_s$.  By
  Lemma~\ref{lemma-cb}, $G$ and $\alpha_1,\ldots,\alpha_r$ are
  completely bounded, so by~\eqref{eq:dm-**},
  \begin{equation*}
    \begin{split}
      \|f(T)\| &= \Big\|[(G \otimes \id_s)(f)](T) + \sum_k g_k(T)
      \otimes [(\alpha_k \otimes \id_s)(f)]\Big\|
      \\
      &\leq \|[(G \otimes \id_s)(f)](T)\| + \sum_{k=1}^n \|g_k(T)
      \otimes [(\alpha_k \otimes \id_s)(f)]\|
      \\
      &\leq C\|(G \otimes \id_s)(f)\|_{A(\overline{\Omega}) \otimes
        M_s} + \sum_{k=1}^r K'\|\alpha_k \otimes \id_s\|\cdot
      \|f\|_{A(\overline{\Omega}) \otimes M_s}
      \\
      &\leq C\|G\|_\cb\|f\|_{A(\overline{\Omega})\otimes M_s} +
      \sum_{k=1}^r K'\|\alpha_k\|_\cb \|f\|_{A(\overline{\Omega})
        \otimes M_s},
    \end{split}
  \end{equation*}
  and the result follows.
\end{proof}

\begin{definition}
  Given a domain $\Omega \subset \C$, a \emph{shrinking} for $\Omega$
  is a collection $\{\psi_\eps\}_{0 \leq \eps \le\eps_0}$ of univalent
  analytic functions in some open set $U \supset \overline{\Omega}$,
  such that $\psi_0$ is the identity map on $U$,
  $\psi_\varepsilon(\overline{\Omega}) \subset \Omega$ for
  $\varepsilon > 0$, and the map $\varepsilon \mapsto
  \psi_\varepsilon$ is continuous in the topology of uniform
  convergence on compact subsets of~$U$.
\end{definition}

If $\Omega$ is star-shaped with respect to $a \in \C$, then it admits
a shrinking; namely, $\psi_\varepsilon(z) = (1-\varepsilon)(z-a) + a$.
The next lemma says that any admissible Jordan domain admits a
shrinking.

\begin{lemma}
  \label{lemma-shrinking}
  Let $\Om$ be a Jordan domain with piecewise $C^2$ smooth boundary
  composed of closed $C^2$ arcs $\{J_k\}_{k=1}^n$.  If the angles
  between these arcs are non-zero, then $\Omega$ admits a shrinking.
\end{lemma}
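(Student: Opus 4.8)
The plan is to reduce everything to the construction of a single \emph{entire} function $V$ that points strictly into $\Om$ along $\prt\Om$, and then to take $\psi_\eps:=\id+\eps V$. Here ``strictly into $\Om$ at $z\in\prt\Om$'' means $\Re\big(\ovl{\nu(z)}\,V(z)\big)>0$ when $z$ is an interior point of one of the arcs $J_k$, with $\nu(z)$ the inner unit normal, and $V(z_0)\in\mathcal C(z_0)$ when $z_0$ is a corner, where $\mathcal C(z_0)$ is the set of directions $v$ with $z_0+tv\in\Om$ for small $t>0$ which are moreover inward for each of the two arcs meeting at $z_0$. An elementary computation in local coordinates in which $\Om$ looks near $z_0$ like the sector $\{0<\arg(z-z_0)<\theta_0\}$ shows that $\mathcal C(z_0)$ is an open sector bisected by $e^{i\theta_0/2}$ of angular width $\min(\theta_0,2\pi-\theta_0)$, which is nonempty precisely because the angle between the arcs at $z_0$ is nonzero (equivalently $\theta_0\in(0,2\pi)$); this is the one place where the hypothesis enters. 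Granting such a $V$, pick a bounded convex open $U\supset\ovl\Om$ and $\eps_0>0$ with $\eps_0\sup_{\ovl U}|V'|<1$; then $\psi_\eps=\id+\eps V$ satisfies $|\psi_\eps'-1|<1$ on the convex set $U$, hence is univalent on $U$, while $\psi_0=\id$ and $\eps\mapsto\psi_\eps$ is affine, hence continuous in the topology of uniform convergence on compacta. After possibly shrinking $\eps_0$, one also gets $\psi_\eps(\ovl\Om)\subset\Om$ for $0<\eps\le\eps_0$: otherwise there would be $\eps_j\to0^+$ and $z_j\in\ovl\Om$ with $z_j+\eps_jV(z_j)\notin\Om$; passing to a subsequence $z_j\to z_*$, the case $z_*\in\Om$ is trivial, and when $z_*\in\prt\Om$ one uses the $C^2$ regularity of the arcs to bound the quadratic (curvature) terms together with the strict inwardness of $V$ at $z_*$ — treating interior points of an arc and corners separately — to reach a contradiction.

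It remains to construct $V$. Let $f:\Om\to\D$ be a Riemann map; by Carath\'eodory's theorem it extends to a homeomorphism $\ovl\Om\to\ovl\D$. The field $V_0:=-f/f'$ is holomorphic on $\Om$ (as $f'$ has no zeros there), and at every interior point $z$ of an arc it equals $|V_0(z)|\,\nu(z)$, i.e.\ it is a ``conformal inner normal'' and hence strictly inward there. It is, however, ill-behaved at the corners (where $f$ is only H\"older), so one corrects it near each corner $z_0$, using the peak-type function $P_{z_0}:=\big(\tfrac12(1+\ovl{f(z_0)}\,f)\big)^{N}\in A(\ovl\Om)$, which for $N$ large is close to $1$ near $z_0$ and uniformly small on the part of $\prt\Om$ away from $z_0$, and the admissible directions $d_{z_0}=e^{i\theta_0/2}\in\mathcal C(z_0)$. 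This produces a function $V_1$, holomorphic on $\Om$ and continuous on $\ovl\Om$, that is strictly inward at \emph{every} point of $\prt\Om$, its value at each corner $z_0$ lying in $\mathcal C(z_0)$. Finally, since $\ovl\Om$ is compact with connected complement, Mergelyan's theorem yields a polynomial $V$ with $\|V-V_1\|_{C(\ovl\Om)}$ small enough that $V$ is still strictly inward along $\prt\Om$; being a polynomial it is entire, so the previous paragraph applies and $\{\psi_\eps\}$ is the desired shrinking. (For domains that are star-shaped, or have analytic boundary, this whole machinery collapses: the explicit $\psi_\eps$ of the star-shaped case, resp.\ $\psi_\eps=f^{-1}(e^{-\eps}f(\cdot))$, already work.)

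The main obstacle is the corner analysis in the construction of $V$. The Riemann-map field $V_0$ degenerates at the corners, so a correction is needed there; that correction must keep the field inward for \emph{both} arcs abutting a corner (which is what forces its value into the narrow sector $\mathcal C(z_0)$), and — this is the delicate point — its strict inwardness must survive the uniform polynomial approximation, even though a holomorphic function that is nearly constant near a boundary point necessarily has a rapidly rotating argument in the transition region away from that point. Controlling $V_1$ throughout this transition region, using the $C^2$ bounds on the arcs and the precise asymptotics of $f$ near the corners, is where the real work lies; the nonzero-angle hypothesis is what makes the target sectors $\mathcal C(z_0)$ nonempty to begin with, and thus is indispensable.
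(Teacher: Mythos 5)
Your overall strategy coincides with the paper's: start from the conformal inner-normal field $-f/f'$ attached to a Riemann map, repair it at the corners, approximate by a polynomial via Mergelyan, and set $\psi_\eps=\id+\eps V$. Your identification of the admissible cone at a corner (width $\min(\theta_0,2\pi-\theta_0)$, bisected by $e^{i\theta_0/2}$) and of where the nonzero-angle hypothesis enters is correct, and the univalence argument via $|\psi_\eps'-1|<1$ on a bounded convex neighbourhood of $\overline\Om$ is fine --- indeed a little cleaner than the paper's appeal to the argument principle.

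The genuine gap is exactly the step you flag but do not carry out: the corner correction. You never write down the formula for $V_1$ in terms of $V_0$, the peak functions $P_{z_0}$ and the directions $d_{z_0}$, let alone the estimate showing $V_1$ stays in the inward cone through the transition region; and the device you name is unlikely to deliver that estimate. On $\partial\Om$ one has $f(z)=e^{it}$, $f(z_0)=e^{it_0}$, so $\tfrac12\bigl(1+\overline{f(z_0)}f(z)\bigr)=\cos\bigl(\tfrac{t-t_0}{2}\bigr)e^{i(t-t_0)/2}$ and hence $\arg P_{z_0}(z)=N(t-t_0)/2$: in the zone where $|P_{z_0}|$ is bounded away from both $0$ and $1$, this argument winds through many multiples of $2\pi$ as $N\to\infty$, so any blend weighted by $P_{z_0}$ has uncontrolled argument precisely where control is needed (the inward cone at a non-corner boundary point has half-width only $\pi/2$). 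Moreover $V_0=-f/f'$ blows up like $(z-z_0)^{1-a}$ at a corner of interior angle $\theta_0<\pi$ (where $a=\pi/\theta_0>1$), so even the continuity of $V_1$ on $\overline\Om$ requires a cancellation you have not exhibited. The paper resolves both issues at once with a \emph{multiplicative} correction $\Pi_\sigma(z)=\prod_k\bigl((z-\tau_{k,\sigma})/(z-z_k)\bigr)^{1-a_k}$, where $\tau_{k,\sigma}$ lies just outside $\overline\Om$ near the corner $z_k$: Pommerenke's expansion $\eta'(z)=v_k(z)\,a_k\,(z-z_k)^{a_k-1}$ shows that $\Pi_\sigma\cdot(-\eta/\eta')$ has a finite nonzero limit at each corner, and the argument of each factor on $\partial\Om$ is an explicitly estimable subtended angle, uniformly small away from the corners and confined to the correct range on the two abutting arcs. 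Without a substitute for that quantitative argument estimate, your construction of $V_1$ is a plan rather than a proof.
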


\begin{proof}
  Denote by $z_1, \dots, z_n\in \partial\Om$ the endpoints of the arcs
  $J_1, \dots, J_n$, so that $z_k$ is a common endpoint of $J_{k-1}$
  and $J_k$ (we assume that the numbering of these arcs is
  counterclockwise and cyclic modulo $n$).  The points $z_1, \dots,
  z_n$ will be referred to as the \emph{corners} of $\prt\Om$.  First
  we construct a function $\mu\in A(\overline\Om)$ such that $\mu\ne
  0$ on $\prt\Om$ and for any $z\in\overline\Om$, $\mu(z)$ points
  strictly inside $\Om$, by which we mean that there is $\si=\si(z)>0$
  such that the interval $[z,z+ \si \mu(z)]$ is contained in
  $\overline\Om$ and is not tangent to $\prt\Om$ at $z$.  If $z=z_k$,
  we require this interval to be non-tangential to both $J_{k-1}$ and
  $J_k$ at $z$.  We denote by $\rho(z)\in \C$ the unit inner normal
  vector to the boundary.  It is defined for points $z\in \prt\Om$
  which are not corners.

  Let $\eta: \ovl\Om\to\ovl\D$ be a Riemann conformal map, and put
  \begin{equation*}
    \nu(z)= - \frac {\eta(z)} {\eta'(z)}.
  \end{equation*}
  Then $\nu$ is continuous on $\ovl\Om\setminus \{z_1,\dots,z_n\}$;
  moreover, $\nu(z_0)$ points strictly inside $\Om$ for any non-corner
  point $z_0\in \prt\Om$ and, in fact, $\rho(z_0)= c(z_0) \nu(z_0)$
  for some $c(z_0)>0$.  Indeed, if $z(t)$ $(0\le t\le T)$ is a
  counterclockwise parametrization of $\prt\Om$, which is smooth at
  $z_0$, $z(t_0)=z_0$, $z'(t_0)=b$, then $\eta'(z_0)b = i c \eta(z_0)$
  for some $c>0$, so that $\rho(z_0)= ib = c\nu(z_0)$.  The function
  $\mu(z)$ will be, in a sense, a small correction of $\nu(z)$, which
  mostly affects neighborhoods of the corner points.

  Denote by $R_{z,\tht}=\{w\in \C: \arg(w-z)=\tht\}$ the ray starting
  at $z$ with angle $\tht$.  We assume that the rays $R_{z_k,
    \tht_k-\beta_k}$, $R_{z_k, \tht_k+\beta_k}$ are correspondingly
  tangent to $\partial\Omega$ at $z_k$ to the arcs $J_{k-1}$, $J_k$,
  where $0<\beta_k<\pi$, and $\tht_k\in [0,2\pi)$ is such that the ray
  $R_{z_k, \tht_k}$ points strictly inside $\Om$.  Theorem 3.9 in
  \cite{Pommerenke} implies that for $z\in \ovl\Om$,
  \begin{align}
    \label{asympt-Pomm}
    \eta(z) & =\eta(z_k) + u_k(z) (z-z_k)^{a_k}, \\
    \label{asympt-Pomm'}
    \eta'(z) & =v_k(z) a_k (z-z_k)^{a_k-1},
  \end{align}
  where $u_k(z)$, $v_k(z)$ have finite non-zero limits as $z\to z_k$,
  and $a_k=\frac \pi {2\beta_k} \in (\frac{1}{2},+\infty)$.  (We use the
  principal branch of the logarithm in the definition of powers.)  For
  small $\si>0$, put $\tau_{k,\si}:= z_k- \si e^{i\tht_k}
  \notin\ovl\Om$, and set
  \begin{equation*}
    \mu_\si(z)= \Pi_\si(z) \nu(z) = - \Pi_\si(z) \frac
    {\eta(z)}{\eta'(z)},
  \end{equation*}
  where
  \begin{equation*}
    \Pi_\si(z)= \prod_{k=1}^n \bigg(\frac {z-\tau_{k,\si}} {z-z_k}
    \bigg)^{1-a_k} \; .
  \end{equation*}
  Since the intervals $[z_k, \tau_{k,\si}]$ are outside $\Om$, the
  function $\Pi_\sigma$ is well-defined and analytic in $\Om$.

  We assert that for sufficiently small $\sigma > 0$,
  $\mu(z)=\mu_\si(z)$ will satisfy all the necessary requirements.  To
  begin with, it follows from \eqref{asympt-Pomm} and
  \eqref{asympt-Pomm'} that for any fixed (small) $\si>0$ and any $k$,
  $\mu_\si(z)$ has a finite non-zero limit as $z\to z_k$, $z\in
  \ovl\Om$.  Hence $\mu_\si$ continues to a function in $A(\ovl\Om)$
  such that $\mu_\si\ne0$ on $\prt\Om$.

  Fix some small positive $\de$ such that for all $k$, $2\de<\beta_k <
  2\pi - 2\de$.  Easy geometric arguments show that there is some
  $\si_0>0$ such that for any $k$, any $z\in J_{k-1}$ such that
  $|z-z_k|<\si_0$ and any $\si\in (0, \si_0)$, either $\arg
  \Pi_\si(z)\in (-\de, \beta_k- \frac \pi 2 + \de)$ if $\beta_k\in [\frac
  \pi 2, \pi)$, or $\arg \Pi_\si(z)\in (\beta_k- \frac \pi 2 + \de,
  \de)$ if $\beta_k\in (0,\frac \pi 2)$.  One has symmetric estimates
  for $\arg \Pi_\si(z)$ if $z\in J_k$, $|z-z_k|<\si_0$.  Since
  $\Pi_\si(z)\to 1$ uniformly on $\prt\Om\setminus \cup_k
  B_{\si_0}(z_k)$, it follows that for any $z\in \prt\Om$, $z\ne z_1,
  \dots, z_n$ when $\si\in (0, \si_0)$ is sufficiently small,
  \begin{equation*}
    -\frac \pi 2 + \de \le \arg \frac {\mu_\si(z)}{\rho(z)} = \arg
    \Pi_\si(z) \le \frac \pi 2-\de.
  \end{equation*}
  For such fixed $\si$, $\mu(z) := \mu_\si(z)$ satisfies all the
  requirements.

  By Mergelyan's theorem \cite{Rudin-book}, there is a sequence of
  polynomials $\mu_m$, such that $\mu_m\to \mu$ uniformly on
  $\ovl\Om$.  For a sufficiently large $m$, put $\tilde \mu(z) =
  \mu_m(z)$.  Then the polynomial $\tilde \mu$ also satisfies all the
  requirements on $\mu$.

  We assert that $\psi_\eps(z)= z + \eps \tilde \mu(z)$ defines a
  shrinking of $\Om$.  Indeed, for small $\eps>0$,
  $\psi_\eps(\prt\Om)$ is a Jordan curve, contained in $\Om$.  An
  application of the argument principle shows that for these values of
  $\eps$, $\psi_\eps$ maps $\Om$ univalently onto the interior of the
  curve $\psi_\eps(\prt\Om)$.  There exists a Jordan domain $\Om'$
  such that $\ovl{\Om}\subset \Om'$ and the boundary of $\Om'$
  consists of $C^2$ smooth arcs $J_1', \dots, J_n'$, which are close
  to the arcs $J_1, \dots, J_n$ in $C^1$ metric.  The domain $\Om'$
  can be chosen in such a way that $\tilde\mu\ne0$ on $\prt\Om'$ and
  $\tilde\mu(z)$ points strictly inside $\Om'$ for all $z\in
  \prt\Om'$.  By the same argument, the functions $\psi_\eps$ are
  univalent on $\Om'$ for $0\le \eps\le\eps_0$, where $\eps_0>0$.
  Therefore the family $\{\psi_\eps\}$ of functions, defined on
  $\Om'$, is a shrinking of $\Om$.
\end{proof}

The following lemma improves upon the results of
Lemma~\ref{lemma-finite-codim} by imposing certain constraints on
$\varphi_k(T)$ and $\mathcal{A}_\Phi$.

\begin{lemma}
  \label{lemma-finite-codim-Aphi}
  Let $\Phi \subset A(\overline{\Omega})$ be a collection of functions
  taking $\Omega$ into $\D$.  If, in addition to the hypotheses of
  Lemma~\ref{lemma-finite-codim}, we have that for every
  $\varphi\in\Phi$, $\overline{\D}$ is a (not necessarily complete)
  $K'$-spectral set for $\varphi(T)$, then for all $s \geq 1$,
  \begin{equation}
    \label{eq:this-lemma-*}
    \|f(T)\| \leq K\|f\|_{A(\overline{\Omega})\otimes M_s},
    \qquad
    \forall f \in (X + \mathcal{A}_\Phi) \otimes
    M_s,
  \end{equation}
  where $K$ depends only on $X$, $\Phi$, $C$ and $K'$, but not on $T$.
  In the case when $X + \mathcal{A}_\Phi = A(\overline{\Omega})$, then
  $\overline{\Omega}$ is a complete $K$-spectral set for $T$, with
  $K=K(X,\Phi,C,K')$.
\end{lemma}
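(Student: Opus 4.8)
The plan is to split off from $X + \mathcal{A}_\Phi$ a finite-dimensional space lying inside $\mathcal{A}_\Phi$: on $X$ the estimate \eqref{eq:dm-**} is already available, and the contribution of the finite-dimensional piece can be bounded by a constant independent of $T$ precisely because $\overline{\D}$ is $K'$-spectral for each $\varphi_k(T)$. A preliminary step is needed to make sense of $\varphi_k(T)$ and of the algebraic manipulations below: since the hypotheses of Lemma~\ref{lemma-finite-codim} are assumed, that lemma shows $\overline{\Omega}$ is a complete $K_0$-spectral set for $T$ for some $K_0$ (whose value we discard), and since $\Omega$ is finitely connected $\Rat(\overline{\Omega})$ is dense in $A(\overline{\Omega})$, so $T$ admits a continuous, unital, multiplicative $A(\overline{\Omega})$-calculus $f \mapsto f(T)$. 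For $f \in A(\overline{\D})$ and $\varphi \in \Phi$ one has $(f\circ\varphi)(T) = f(\varphi(T))$, both sides being continuous in $f$ (the left because $f \mapsto f\circ\varphi$ is bounded $A(\overline{\D})\to A(\overline{\Omega})$ and the calculus is bounded, the right because $\overline{\D}$ is $K'$-spectral for $\varphi(T)$) and agreeing on polynomials. Hence for $w = \sum_{j=1}^N \prod_{k=1}^n f_{j,k}\circ\varphi_k \in \mathcal{A}_\Phi$ one gets $w(T) = \sum_{j=1}^N \prod_{k=1}^n f_{j,k}(\varphi_k(T))$, so by submultiplicativity and $K'$-spectrality
\[
\|w(T)\| \le \sum_{j=1}^N (K')^n \prod_{k=1}^n \|f_{j,k}\|_{A(\overline{\D})} =: M(w),
\]
a constant depending on $K'$, $n$ and a fixed representation of $w$, but \emph{not} on $T$.

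Next I would apply Lemma~\ref{lemma-complement-polynomials} with $V = X + \mathcal{A}_\Phi$ — a Banach space, being the sum of the closed finite-codimensional subspace $X$ with a finite-dimensional one — and $Y = \mathcal{A}_\Phi$, so that $X + Y = V$. This yields $g_1,\dots,g_m \in \mathcal{A}_\Phi$ and functionals $\alpha_1,\dots,\alpha_m$ on $V$, extended by Hahn--Banach to $A(\overline{\Omega})^*$ without increasing their norms, so that $G(f) := f - \sum_i \alpha_i(f)g_i$ projects $V$ onto $X$. Fix representations of the $g_i$ in $\mathcal{A}_\Phi$ and set $M := \max_i M(g_i)$. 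Given $f \in (X + \mathcal{A}_\Phi)\otimes M_s$, applying $G$ entrywise yields $f = h + \sum_i g_i \otimes (\alpha_i\otimes\id_s)(f)$ with $h := (G\otimes\id_s)(f) \in X\otimes M_s$, and hence, by multiplicativity of the calculus,
\[
f(T) = h(T) + \sum_{i=1}^m g_i(T) \otimes (\alpha_i\otimes\id_s)(f).
\]
Now $\|h(T)\| \le C\|h\|_{A(\overline{\Omega})\otimes M_s}$, since \eqref{eq:dm-**} extends from $(X\cap\Rat(\overline{\Omega}))\otimes M_s$ to all of $X\otimes M_s$: $X\cap\Rat(\overline{\Omega})$ is dense in $X$ (apply Lemma~\ref{lemma-complement-polynomials} with $V = A(\overline{\Omega})$, $Y = \Rat(\overline{\Omega})$, using the remark following that lemma), and the bound $\|(g-g')(T)\| \le C\|g-g'\|$ on $(X\cap\Rat(\overline{\Omega}))\otimes M_s$ lets one pass to the limit, the limit agreeing with $h(T)$ because the $A(\overline{\Omega})$-calculus extends the rational one continuously. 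By Lemma~\ref{lemma-cb} each $\alpha_i$ is completely bounded with $\|\alpha_i\|_\cb = \|\alpha_i\|$, so $\|h\|_{A(\overline{\Omega})\otimes M_s} \le \big(1 + \sum_i\|g_i\|\,\|\alpha_i\|\big)\|f\|_{A(\overline{\Omega})\otimes M_s}$, while the identity $\|S\otimes B\|_{\B(H\otimes\C^s)} = \|S\|\,\|B\|$ gives $\big\|\sum_i g_i(T)\otimes(\alpha_i\otimes\id_s)(f)\big\| \le M\sum_i\|\alpha_i\|\,\|f\|_{A(\overline{\Omega})\otimes M_s}$. Adding these yields \eqref{eq:this-lemma-*} with $K = C(1 + \sum_i\|g_i\|\,\|\alpha_i\|) + M\sum_i\|\alpha_i\|$, depending only on $X$, $\Phi$, $C$ and $K'$.

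When $X + \mathcal{A}_\Phi = A(\overline{\Omega})$, \eqref{eq:this-lemma-*} holds for all $f \in A(\overline{\Omega})\otimes M_s$; restricting to $s\times s$ rational matrix functions with poles off $\overline{\Omega}$, for which $\|f\|_{A(\overline{\Omega})\otimes M_s} = \max_{z\in\overline{\Omega}}\|f(z)\|_{M_s}$, is exactly the statement that $\overline{\Omega}$ is a complete $K$-spectral set for $T$ with $K = K(X,\Phi,C,K')$. I expect the only genuinely delicate point to be the preliminary step: one must install the $A(\overline{\Omega})$-calculus via Lemma~\ref{lemma-finite-codim} so that $w(T)$ is defined and behaves multiplicatively for $w \in \mathcal{A}_\Phi$, and then observe that the resulting bound $M(w)$ does not depend on $T$ — this is precisely where the hypothesis that $\overline{\D}$ is $K'$-spectral for the $\varphi_k(T)$ is used to absorb the $S_\Lambda(T)$-dependence inherited from Lemma~\ref{lemma-finite-codim}. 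The remainder is the same bookkeeping as in the proof of Lemma~\ref{lemma-finite-codim}, with Lemma~\ref{lemma-cb} handling the matrix coefficients.
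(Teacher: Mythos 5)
Your proposal is correct and follows essentially the same route as the paper's proof: apply Lemma~\ref{lemma-complement-polynomials} with $V = X + \mathcal{A}_\Phi$, $Y = \mathcal{A}_\Phi$, bound $\|g_i(T)\|$ independently of $T$ by writing each $g_i$ via its $\mathcal{A}_\Phi$-representation and using the $K'$-spectrality of $\overline{\D}$ for the $\varphi_k(T)$, then combine with \eqref{eq:dm-**} and Lemma~\ref{lemma-cb}. Your explicit justification of the $A(\overline{\Omega})$-calculus identity $(f\circ\varphi)(T)=f(\varphi(T))$ and of the density extension of \eqref{eq:dm-**} from $(X\cap\Rat(\overline{\Omega}))\otimes M_s$ to $X\otimes M_s$ merely spells out steps the paper leaves implicit.
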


Note that the operators $\varphi(T)$ and $f(T)$ are defined by the
$A(\overline{\Omega})$-functional calculus for $T$ because by
Lemma~\ref{lemma-finite-codim}, $\overline{\Omega}$ is a complete
$K$-spectral for $T$ for some $K$.  On the other hand, and in contrast
to the situation in most of this paper, here the complete
$K'$-spectrality of $\overline{\D}$ for $\varphi(T)$ is not needed ---
$K'$-spectrality suffices.  The reason for this is that all the
functions that appear in the proof of this lemma are scalar-valued
rather than matrix-valued.

\begin{proof}[Proof of Lemma~\ref{lemma-finite-codim-Aphi}]
  First we apply Lemma~\ref{lemma-complement-polynomials} with $V = X
  + \mathcal{A}_\Phi$, and $Y = \mathcal{A}_\Phi$ to obtain functions
  $g_1,\ldots,g_r \in \mathcal{A}_\Phi$, functionals
  $\alpha_1,\ldots,\alpha_r\in A(\overline{\Omega})^*$, and an
  operator $G$ as in the statement of that lemma.

  By Lemma~\ref{lemma-finite-codim}, $\overline{\Omega}$ is a complete
  $K$-spectral set for $T$ (with $K$ depending on $T$).  It follows
  that $T$ has a continuous $A(\overline{\Omega})$-functional
  calculus, and so the operators $g_k(T)$ are well defined.  Let us
  show that there is some constant $C'$ depending only on
  $g_1,\ldots,g_r$ (and not on $T$) such that $\|g_k(T)\| \leq C'$,
  for $k = 1,\ldots,r$.  Since $g_k \in \mathcal{A}_\Phi$, we can
  write
  \begin{equation*}
    g_k(z) = \sum_{j=1}^N f_{j,1}^k(\varphi_1(z))\cdot\,\cdots\,\cdot
    f_{j,n}^k(\varphi_n(z)),
  \end{equation*}
  where $f_{j,l}^k \in A(\overline{\D})$.  (Because there are a finite
  number of functions $g_k$, the same $N$ will do every $k$.)  By the
  properties of the $A(\overline{\Omega})$-functional calculus for $T$
  we see that for $k = 1,\ldots,r$,
  \begin{equation*}
    g_k(T) = \sum_{j=1}^N f_{j,1}^k(\varphi_1(T))\cdot\,\cdots\,\cdot
    f_{j,n}^k(\varphi_n(T)).
  \end{equation*}
  Using the fact that $\overline{\D}$ is a $K'$-spectral set for
  $\varphi_k(T)$, we get
  \begin{equation*}
    \begin{split}
      \|g_k(T)\| &\leq \sum_{j=1}^N \|f_{j,1}^k(\varphi_1(T))\|\cdots
      \|f_{j,n}^k(\varphi_n(T))\| \\
      &\leq \sum_{j=1}^N (K')^n\|f_{j,1}^k\|_{A(\overline{\D})}\cdots
      \|f_{j,n}^k\|_{A(\overline{\D})}.
    \end{split}
  \end{equation*}
  This shows that for $k=1,\ldots,n$, $\|g_k(T)\| \leq C'$, with $C'$
  independent of~$T$.

  Finally, we proceed as in the proof of
  Lemma~\ref{lemma-finite-codim}.  Take $f \in (X + \mathcal{A}_\Phi)
  \otimes M_s$ and estimate
  \begin{equation*}
    \begin{split}
      \|f(T)\| &\leq \|[(G \otimes \id_s)(f)](T)\| + \sum_{k=1}^r
      \|g_k(T)\otimes [(\alpha_k \otimes \id_s)(f)]\| \\ &\leq
      C\|G\|_\cb \|f\|_{A(\overline{\Omega})\otimes M_s} +
      \sum_{k=1}^r
      C'\|\alpha_k\|_\cb\|f\|_{A(\overline{\Omega})\otimes M_s}.
    \end{split}
  \end{equation*}
  Apply Lemma~\ref{lemma-cb} to get \eqref{eq:this-lemma-*}.  The
  remaining part of the lemma now follows.
\end{proof}

We will also need a lemma that allows one to pass to the limit in a
family of inequalities of the form \eqref{eq:dm-**} depending on some
parameter $\varepsilon$.  The subspaces which play the role of $X$
will be given by the kernels of finite rank operators
$\Sigma_\varepsilon$.

\begin{lemma}
  \label{lemma-lambdas}
  Let $\{T_\varepsilon\}_{0 \leq \varepsilon \leq \varepsilon_0}
  \subset \B(H)$, with $\sigma(T_\varepsilon) \subset
  \overline{\Omega}$ for $0 \leq \eps\le\eps_0$, and
  $\{\Sigma_\varepsilon\}_{0 \leq \varepsilon < \eps_0} \subset
  \B(A(\overline{\Omega}), \C^r)$.  Assume that the maps $\varepsilon
  \mapsto T_\varepsilon$ and $\varepsilon \mapsto \Sigma_\varepsilon$
  are continuous in the norm topology.  Assume also that $\Sigma_0$ is
  surjective and that for all $s \geq 1$ and for all $\varepsilon \in
  (0,\eps_0]$,
  \begin{equation}
    \label{eq:lambdas-*}
    \|f(T_\varepsilon)\| \leq C\|f\|_{A(\overline{\Omega}) \otimes
      M_s}, \qquad
    \forall f \in (\ker \Sigma_\varepsilon \cap
    \Rat(\overline{\Omega})) \otimes M_s,
  \end{equation}
  where $C$ is a constant independent of $\varepsilon$.  Then
  \eqref{eq:lambdas-*} also holds with $\varepsilon = 0$.
\end{lemma}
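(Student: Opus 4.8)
The plan is to approximate $T_0$ by the operators $T_\varepsilon$ with $\varepsilon>0$ and, given $f$ in $(\ker\Sigma_0\cap\Rat(\overline{\Omega}))\otimes M_s$, to perturb $f$ by a small rational correction landing it in $\ker\Sigma_\varepsilon$, so that \eqref{eq:lambdas-*} applies to the corrected function at level $\varepsilon$; one then lets $\varepsilon\to 0$. To set this up, observe that since $\Sigma_0$ is surjective and $\Rat(\overline{\Omega})$ is dense in $A(\overline{\Omega})$ ($\Omega$ being finitely connected), one can choose $h_1,\dots,h_r\in\Rat(\overline{\Omega})$ with $\Sigma_0 h_j=e_j$, the standard basis of $\C^r$: take preimages in $A(\overline{\Omega})$, approximate them by rational functions, and replace the latter by a suitable invertible linear combination. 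Since $\varepsilon\mapsto\Sigma_\varepsilon$ is norm continuous, the complex $r\times r$ matrix $M_\varepsilon$ whose columns are $\Sigma_\varepsilon h_1,\dots,\Sigma_\varepsilon h_r$ converges to the identity as $\varepsilon\to 0$, so there is $\varepsilon_1>0$ with $M_\varepsilon$ invertible and $\sup_{0<\varepsilon<\varepsilon_1}\|M_\varepsilon^{-1}\|<\infty$.

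Now fix $f\in(\ker\Sigma_0\cap\Rat(\overline{\Omega}))\otimes M_s$; the goal is $\|f(T_0)\|\le C\|f\|_{A(\overline{\Omega})\otimes M_s}$. Write $(\Sigma_\varepsilon\otimes\id_s)(f)=\sum_{i=1}^r e_i\otimes b_i^\varepsilon$ with $b_i^\varepsilon\in M_s$. Since $(\Sigma_0\otimes\id_s)(f)=0$ and $\varepsilon\mapsto\Sigma_\varepsilon$ is norm continuous (the amplification by $\id_s$ is controlled by a routine estimate, or by Lemma~\ref{lemma-cb} applied to the scalar components of $\Sigma_\varepsilon-\Sigma_0$), we get $\max_i\|b_i^\varepsilon\|\to 0$. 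Solving the linear system $\sum_{j=1}^r (M_\varepsilon)_{ij}\,c_j^\varepsilon=b_i^\varepsilon$ over $M_s$ produces $c_j^\varepsilon\in M_s$ with $\max_j\|c_j^\varepsilon\|\to 0$, and the function
\[
  f_\varepsilon \;=\; f-\sum_{j=1}^r h_j\otimes c_j^\varepsilon \;\in\; \Rat(\overline{\Omega})\otimes M_s
\]
satisfies $(\Sigma_\varepsilon\otimes\id_s)(f_\varepsilon)=0$ by the choice of $M_\varepsilon$, i.e.\ $f_\varepsilon\in(\ker\Sigma_\varepsilon\cap\Rat(\overline{\Omega}))\otimes M_s$. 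Then \eqref{eq:lambdas-*} at level $\varepsilon$ gives $\|f_\varepsilon(T_\varepsilon)\|\le C\|f_\varepsilon\|_{A(\overline{\Omega})\otimes M_s}$, and since $\sum_j h_j\otimes c_j^\varepsilon\to 0$ uniformly on $\overline{\Omega}$ we have $\|f_\varepsilon\|_{A(\overline{\Omega})\otimes M_s}\to\|f\|_{A(\overline{\Omega})\otimes M_s}$.

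The remaining point, and the one that needs the most care, is that $f_\varepsilon(T_\varepsilon)\to f(T_0)$ in operator norm. Here I would use that $\sigma(T_\varepsilon)\subset\overline{\Omega}$ for \emph{every} $\varepsilon$: fix a cycle $\Gamma$ surrounding $\overline{\Omega}$ (hence enclosing each $\sigma(T_\varepsilon)$) and lying in a common neighborhood of analyticity of $f$ and $h_1,\dots,h_r$. For $\varepsilon$ small the resolvents $(z-T_\varepsilon)^{-1}$ are defined and, comparing with $(z-T_0)^{-1}$ via a Neumann series, uniformly bounded for $z\in\Gamma$ and convergent to $(z-T_0)^{-1}$ uniformly there; the Cauchy--Riesz representations then yield $f(T_\varepsilon)\to f(T_0)$ and $\sup_\varepsilon\|h_j(T_\varepsilon)\|<\infty$. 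Combined with $\|c_j^\varepsilon\|\to 0$, this gives $f_\varepsilon(T_\varepsilon)=f(T_\varepsilon)-\sum_j h_j(T_\varepsilon)\otimes c_j^\varepsilon\to f(T_0)$. Letting $\varepsilon\to 0$ in $\|f_\varepsilon(T_\varepsilon)\|\le C\|f_\varepsilon\|_{A(\overline{\Omega})\otimes M_s}$ gives $\|f(T_0)\|\le C\|f\|_{A(\overline{\Omega})\otimes M_s}$, i.e.\ \eqref{eq:lambdas-*} at $\varepsilon=0$, as required.
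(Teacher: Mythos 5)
Your argument is correct and is essentially the paper's own proof: your corrected function $f_\varepsilon = f-\sum_j h_j\otimes c_j^\varepsilon$ is precisely the paper's projection $(G_\varepsilon\otimes\id_s)(f)$ of $f$ onto $\ker\Sigma_\varepsilon$ along the rational complement $\operatorname{span}\{h_1,\dots,h_r\}$ (obtained there via Lemma~\ref{lemma-complement-polynomials} and the invertibility of $\Sigma_\varepsilon$ restricted to that complement), specialized to $f\in\ker\Sigma_0$. The differences are presentational --- the paper estimates $\|f(T_\varepsilon)\|$ for general $f\in\Rat(\overline{\Omega})\otimes M_s$ before specializing, whereas you work directly with $f\in\ker\Sigma_0$ and spell out the convergence $f_\varepsilon(T_\varepsilon)\to f(T_0)$, which the paper leaves implicit.
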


\begin{proof}
  Since $\Sigma_0$ is surjective, $X = \ker \Sigma_0$ has
  codimension $r$ in $A(\overline{\Omega})$.  We apply
  Lemma~\ref{lemma-complement-polynomials} with $Y =
  \Rat(\overline{\Omega})$ to obtain functions $g_1,\ldots,g_r \in
  \Rat(\overline{\Omega})$, a subspace $Z =
  \operatorname{span}\{g_1,\ldots,g_r\}$, functionals
  $\alpha_1,\ldots,\alpha_r \in A(\overline{\Omega})^*$ and an
  operator $G$ as in the statement of that lemma.

  Consider the restrictions $\Sigma_\varepsilon | Z : Z \to \C^r$.
  The operator $\Sigma_0 | Z$ is invertible, therefore,
  $\Sigma_\varepsilon | Z$ is invertible for $\varepsilon$
  sufficiently small.  Put $P_\varepsilon =
  (\Sigma_\varepsilon|Z)^{-1}\Sigma_\varepsilon$.  Thus $P_\varepsilon
  : A(\overline{\Omega}) \to Z$ and $P_\varepsilon^2 = P_\varepsilon$.
  Indeed, $P_\varepsilon$ is the projection onto $Z$ parallel to $\ker
  \Sigma_\varepsilon$.  Define $\alpha_k^\varepsilon \in
  (A(\overline{\Omega}))^*$ by $\alpha_k^\varepsilon(f) =
  \alpha_k(P_\varepsilon f)$, and check that $G_\varepsilon(f)
  \overset{\text{def}}{=}f - \sum \alpha_k^\varepsilon(f) g_k$ is in
  $\ker \Sigma_\varepsilon$ for every $f \in A(\overline{\Omega})$.
  We compute
  \begin{equation*}
    \begin{split}
      P_\varepsilon G_\varepsilon(f) &= P_\varepsilon f - \sum_{k=1}^r
      \alpha_k^\varepsilon(f)P_\varepsilon g_k \\
      &= P_\varepsilon^2 f
      - \sum_{k=1}^r \alpha_k(P_\varepsilon f) P_\varepsilon g_k =
      P_\varepsilon G( P_\varepsilon f) = 0,
    \end{split}
  \end{equation*}
  because $P_\varepsilon f \in Z$ and $\ker G = Z$.  It follows that
  $G_\varepsilon(f)$ is in $\ker P_\varepsilon = \ker
  \Sigma_\varepsilon$.

  Since $T_\varepsilon$ depends continuously on $\varepsilon$, there
  is some constant $K$ independent of $\varepsilon$ such that
  $\|g_k(T_\varepsilon)\| \leq K$ for small $\varepsilon$ and $k =
  1,\ldots, r$.  Take $f \in \Rat(\overline{\Omega}) \otimes M_s$ and
  estimate
  \begin{equation*}
    \begin{split}
      \|f(T_\varepsilon)\| &= \Big\|[(G_\varepsilon \otimes
      \id_s)(f)](T_\varepsilon) + \sum_{k=1}^r
      g_k(T_\varepsilon)\otimes[(\alpha_k^\varepsilon \otimes \id_s)(f)]
      \Big\| \\ &\leq C\|(G_\varepsilon\otimes
      \id_s)(f)\|_{A(\overline{\Omega})\otimes M_s} + \sum_{k=1}^r
      K\|(\alpha_k^\varepsilon\otimes \id_s)(f)\|_{A(\overline{\Omega})
        \otimes M_s}.
    \end{split}
  \end{equation*}
  Since $G_\varepsilon$ and $\alpha_k^\varepsilon$ depend continuously
  on $\varepsilon$, we can let $\varepsilon \to 0$ to obtain
  \begin{equation*}
    \|f(T_0)\| \leq C\|(G \otimes
    \id_s)(f)\|_{A(\overline{\Omega})\otimes M_s} + \sum_{k=1}^r K
    \|(\alpha_k\otimes \id_s)(f)\|_{A(\overline{\Omega}) \otimes M_s}.
  \end{equation*}
  The proof concludes by noting that if $f \in \ker \Sigma_0 \otimes
  M_s$, then $(G\otimes \id_s)(f) = f$ and $(\alpha_k \otimes
  \id_s)(f) = 0$ for $k = 1,\ldots,r$.
\end{proof}

The next lemma constructs a family of admissible functions
$\Phi_\varepsilon$ which work well with the operators
$\psi_\varepsilon(T)$, where $\{\psi_\varepsilon\}$ is a shrinking for
$\Omega$.

\begin{lemma}
  \label{lemma-build-shrinking}
  Let $\Omega$ be a Jordan domain with a shrinking
  $\{\psi_\varepsilon\}_{0 \leq \varepsilon \leq \varepsilon_0}$, and
  let $\Phi : \overline{\Omega} \to \overline{\D}^n$ be admissible and
  analytic in a neighborhood of $\overline{\Omega}$.  Let $T \in
  \B(H)$ with $\sigma(T) \subset \overline{\Omega}$ and such that
  $\overline{\D}$ is a complete $K$-spectral set for $\varphi_k(T)$,
  for $k = 1,\ldots, n$.  Then there is some $0 < \delta \leq
  \varepsilon_0$ and a family of admissible functions
  $\{\Phi_\varepsilon\}_{0 \leq \varepsilon \leq \delta}$ over
  $\Omega$ with $\Phi_\varepsilon = (\varphi_1^\varepsilon, \ldots,
  \varphi_n^\varepsilon)$ and $\Phi_0 = \Phi$, such that each
  $\varphi_k^\varepsilon$ is analytic in some neighborhood $U_k$ of
  $\Omega \cup J_k$, the map $\varepsilon \mapsto
  \varphi_k^\varepsilon$ is continuous from $[0,\delta]$ to
  $C^\infty(U_k)$, and $\overline{\D}$ is a complete $K$-spectral set
  for $\varphi_k^\varepsilon(\psi_\varepsilon(T))$.
\end{lemma}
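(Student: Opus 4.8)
The plan is to realize each $\varphi_k^\varepsilon$ as a composition that reduces the operator $\varphi_k^\varepsilon(\psi_\varepsilon(T))$ to an operator we already control. Concretely, I would look for $\varphi_k^\varepsilon$ of the form
\[
  \varphi_k^\varepsilon = F_k^\varepsilon \circ \varphi_k \circ \psi_\varepsilon^{-1},
\]
where $\psi_\varepsilon^{-1}$ is the analytic local inverse of the univalent map $\psi_\varepsilon$ (defined on a neighborhood of $\overline{\Omega}$ for small $\varepsilon$, since $\psi_\varepsilon \to \mathrm{id}$ uniformly on compacta), and $F_k^\varepsilon$ is an analytic self-map of a neighborhood of $\overline{\D}$ with $F_k^0 = \mathrm{id}$, still to be chosen. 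The point of this ansatz is an operator identity. Since $\sigma(\psi_\varepsilon(T)) = \psi_\varepsilon(\sigma(T)) \subset \psi_\varepsilon(\overline{\Omega}) \subset \Omega$, the composition rule for the holomorphic functional calculus gives $\varphi_k^\varepsilon(\psi_\varepsilon(T)) = (\varphi_k^\varepsilon \circ \psi_\varepsilon)(T)$, and because $\varphi_k^\varepsilon \circ \psi_\varepsilon = F_k^\varepsilon \circ \varphi_k$ on a neighborhood of $\sigma(T)$ this equals $F_k^\varepsilon(\varphi_k(T))$. As $F_k^\varepsilon$ maps $\overline{\D}$ into $\overline{\D}$, for any matrix polynomial $p$ one has $p\bigl(F_k^\varepsilon(\varphi_k(T))\bigr) = (p\circ F_k^\varepsilon)(\varphi_k(T))$ with $\|p\circ F_k^\varepsilon\|_{\overline{\D}} \le \|p\|_{\overline{\D}}$; hence complete $K$-spectrality of $\overline{\D}$ for $\varphi_k(T)$ transfers, with the \emph{same} constant $K$, to $\varphi_k^\varepsilon(\psi_\varepsilon(T))$, for any admissible choice of $F_k^\varepsilon$.

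The reason one cannot simply take $F_k^\varepsilon = \mathrm{id}$ is admissibility. Because $\psi_\varepsilon$ shrinks $\overline{\Omega}$ into $\Omega$, its inverse pushes $\partial\Omega$ slightly outside $\overline{\Omega}$, so $\varphi_k \circ \psi_\varepsilon^{-1}$ has modulus exceeding $1$ on $\partial\Omega$ near $J_k$ and therefore neither maps $\overline{\Omega}$ into $\overline{\D}$ nor is unimodular on any arc of $\partial\Omega$. The role of $F_k^\varepsilon$ is to renormalize the boundary: the image curve $\Gamma_k^\varepsilon := \varphi_k\bigl(\psi_\varepsilon^{-1}(J_k^\varepsilon)\bigr)$ is an analytic arc lying just outside $\T$, a perturbation of $\varphi_k(J_k)\subset\T$ tending to it as $\varepsilon \to 0$, and I would build $F_k^\varepsilon$ so that it carries $\Gamma_k^\varepsilon$ onto a sub-arc of $\T$ while contracting $\overline{\D}$ into itself. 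With $J_k^\varepsilon \subset \partial\Omega$ chosen as $\psi_\varepsilon$ of a slight adjustment of $J_k$, this produces $|\varphi_k^\varepsilon| = 1$ on $J_k^\varepsilon$ and $|\varphi_k^\varepsilon| \le 1$ on $\overline{\Omega}$, recovering admissibility condition (b) exactly.

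The remaining admissibility conditions I would obtain by perturbation: for small $\varepsilon$ the arcs $J_k^\varepsilon$ still cover $\partial\Omega$ and meet transversally at the corners (condition (d)), while the derivative bound $|(\varphi_k^\varepsilon)'| \ge C > 0$ and the separation conditions (e)--(f) persist because $\varphi_k^\varepsilon \to \varphi_k$ and $(\varphi_k^\varepsilon)' \to \varphi_k'$ on the relevant arcs and $\varphi_k$ is admissible. For the continuity statement I would fix a single neighborhood $U_k$ of $\Omega \cup J_k$ inside $\Omega_k$ and work on it for all $\varepsilon \le \delta$; continuity of $\varepsilon \mapsto \psi_\varepsilon$ in uniform convergence on compacta upgrades, via Cauchy estimates for analytic functions, to continuity in $C^\infty(U_k)$ of $\psi_\varepsilon$, of its inverse (by the analytic inverse function theorem), of $F_k^\varepsilon$ (by the construction), and hence of the composition $\varphi_k^\varepsilon$. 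Shrinking $\delta$ keeps everything defined and analytic on the fixed $U_k$, as required for Lemma~\ref{article1-lemma}.

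The hard part will be the explicit construction of the renormalizing self-map $F_k^\varepsilon$ achieving \emph{exact} unimodularity on $\Gamma_k^\varepsilon$ — a single radial rescaling does not suffice, since $\Gamma_k^\varepsilon$ is not a circular arc — while mapping $\overline{\D}$ into $\overline{\D}$, tending to the identity, depending continuously on $\varepsilon$, and, most delicately, matching up consistently at the corners where adjacent arcs $J_k^\varepsilon$ and $J_l^\varepsilon$ meet, so that the tuple $\Phi_\varepsilon$ is jointly admissible. I expect to handle the unimodularity by first solving a Schwarz--Dirichlet problem to prescribe the outward normal correction to first order, writing $F_k^\varepsilon(w) = w\,e^{-\varepsilon h_k(w)}$ with $\Re h_k$ equal on the arc to the outward displacement of $\Gamma_k^\varepsilon$ and extended positively to all of $\T$ — which already forces $F_k^\varepsilon(\overline{\D}) \subset \D$ by the maximum principle applied to the harmonic function $\log|F_k^\varepsilon|$ — and then correcting to exact unimodularity by an implicit-function or fixed-point argument in a Hölder space. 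The corner compatibility I would arrange using the transversality built into the definition of admissibility, shrinking the arcs slightly near each corner so that the separate renormalizations do not conflict.
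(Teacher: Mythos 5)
Your operator-theoretic reduction coincides exactly with the paper's: the ansatz $\varphi_k^\varepsilon = F_k^\varepsilon\circ\varphi_k\circ\psi_\varepsilon^{-1}$ (the paper writes $\eta_k^\varepsilon$ for your $F_k^\varepsilon$), the identity $\varphi_k^\varepsilon(\psi_\varepsilon(T)) = F_k^\varepsilon(\varphi_k(T))$ via the composition rule for the functional calculus, and the transfer of complete $K$-spectrality with the \emph{same} constant because $F_k^\varepsilon$ is a contractive self-map of the disk. That half of your argument is correct and is precisely how the paper begins.

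The gap is that the construction of $F_k^\varepsilon$ --- which is essentially the entire content of the paper's proof --- is left as a plan, and the plan as written both contains a flaw and defers its hardest steps. First, the property your maximum-principle argument yields, $F_k^\varepsilon(\overline{\D})\subset\overline{\D}$, is not the property you need: admissibility requires $\varphi_k^\varepsilon(\overline{\Omega})\subset\overline{\D}$, i.e.\ $F_k^\varepsilon$ must map the set $\varphi_k(\psi_\varepsilon^{-1}(\overline{\Omega}))$, which protrudes \emph{beyond} $\overline{\D}$ near $\varphi_k(J_k)$, into $\overline{\D}$; controlling $|F_k^\varepsilon|$ on that protruding region is exactly as hard as the ``exact unimodularity on $\Gamma_k^\varepsilon$'' that you postpone to an unexecuted implicit-function argument, so nothing in the proposal closes it. Second, the continuity of $\varepsilon\mapsto F_k^\varepsilon$ is asserted ``by the construction,'' which is circular, and a fixed point in a H\"older space would in any case not by itself give the $C^\infty(U_k)$-continuity the lemma asserts. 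The paper's device, which is what your sketch is missing, resolves all of these issues at a stroke: for small $\varepsilon$ the curve $\widetilde\Gamma_k^\varepsilon=\varphi_k(\psi_\varepsilon^{-1}(\widetilde J_k))$ is a radial graph $\{a_k^\varepsilon(\zeta)\zeta\}$ over an arc of $\T$ with $a_k^\varepsilon\ge 1$; one extends $a_k^\varepsilon$ (by a partition-of-unity construction jointly in $(\zeta,\varepsilon)$) to a smooth $b_k^\varepsilon\ge 1$ on all of $\T$ so that the Jordan domain $D_k^\varepsilon$ bounded by $\{b_k^\varepsilon(\zeta)\zeta\}$ contains $\varphi_k(\psi_\varepsilon^{-1}(\Omega))$, and takes $F_k^\varepsilon=\eta_k^\varepsilon$, the normalized Riemann map of $D_k^\varepsilon$ onto $\D$. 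Unimodularity then holds \emph{exactly} on $\Gamma_k^\varepsilon\subset\partial D_k^\varepsilon$, so the original arcs $J_k$ serve as the arcs of unimodularity for every $\varepsilon$ and your corner-matching problem never arises; the mapping-in property is automatic; and the genuinely delicate point --- continuity of $\varepsilon\mapsto\eta_k^\varepsilon$ in $C^\infty$ up to the boundary, needed both to apply Lemma~\ref{article1-lemma} and to conclude admissibility of $\Phi_\varepsilon$ from admissibility of $\Phi_0=\Phi$ by openness --- is obtained by conjugating with explicit radial diffeomorphisms and invoking the theorem of Bertrand and Gong \cite{BertrandGong} on smooth parameter dependence of conformal maps, with analyticity of $\varphi_k^\varepsilon$ on a neighborhood of $\Omega\cup J_k$ then coming from Schwarz reflection across the arc of unimodularity. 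Your Schwarz--Dirichlet/fixed-point scheme might be completable, but it would in effect have to reprove this Riemann-map regularity by hand.
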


\begin{proof}
  We construct admissible functions $\Phi_\varepsilon =
  (\varphi_1^\varepsilon, \ldots, \varphi_n^\varepsilon)$ satisfying
  the statement of the lemma by choosing $\varphi_k^\varepsilon$ to
  have the form $\varphi_k^\varepsilon = \eta_k^\varepsilon \circ
  \varphi_k \circ \psi_\varepsilon^{-1}$, where $\eta_k^\varepsilon
  \in A(\overline{\D})$ and $\|\eta_k^\varepsilon\|_{A(\overline{\D})}
  \leq 1$.  Because $\varphi_k^\varepsilon(\psi_\varepsilon(T)) =
  \eta_k^\varepsilon(\varphi_k(T))$, this will guarantee that
  $\varphi_k^\varepsilon(\psi_\varepsilon(T))$ has $\overline{\D}$ as
  a complete $K$-spectral set.  The construction of
  $\eta_k^\varepsilon$ is geometric.

  First, continue analytically the arcs $J_k \subset \partial \Omega$
  to larger arcs $\widetilde{J}_k$ such that $\varphi_k$ and
  $\psi_\varepsilon$ are analytic in a neighborhood of
  $\widetilde{J}_k$ (recall that $\varphi_k$ and $\psi_\varepsilon$
  are analytic in a neighborhood of $\overline{\Omega}$).  In this
  proof, we only deal with closed arcs.  Assume that $\widetilde{J}_k$
  are small enough that each $\varphi_k|\widetilde{J}_k$ is still one
  to one.  Put $\Gamma_k^{\varepsilon} =
  \varphi_k(\psi_\varepsilon^{-1}(J_k))$ and
  $\widetilde{\Gamma}_k^{\varepsilon} =
  \varphi_k(\psi_\varepsilon^{-1}(\widetilde{J}_k))$.  Since
  $\widetilde{\Gamma}_k^0 = \varphi_k(\widetilde{J}_k)$ is an arc of
  $\T$, it follows by continuity that for small $\varepsilon$, there
  exists $\widetilde{I}_k^\varepsilon$ an arc of $\T$, and a function
  $a_k^\varepsilon : \widetilde{I}_k^\varepsilon \to \R_+$ such that
  $\widetilde{\Gamma}_k^\varepsilon = \{a_k^\varepsilon(\zeta)\zeta :
  \zeta \in \widetilde{I}_k^\varepsilon\}$.  Also, $a_k^\varepsilon
  \geq 1$ in $\widetilde{I}_k^\varepsilon$ and $a_k^0 = 1$ in
  $\widetilde{I}^0_k$.  The functions $a_k^\varepsilon$ are assumed to
  be defined for $0\le \varepsilon\le\de$.  Let $I_k^\varepsilon$ be
  the sub-arc of $\widetilde{I}_k^\varepsilon$ such that
  $\Gamma_k^\varepsilon = \{a_k^\varepsilon(\zeta)\zeta : \zeta \in
  I_k^\varepsilon\}$.

  Next find functions $b_k^\varepsilon : \T \to \R_+$, $0 \leq
  \varepsilon \leq \delta$, such that $b_k^\varepsilon \in
  C^\infty(\T)$ for each $\varepsilon$, the map $\varepsilon \mapsto
  b_k^\varepsilon$ is continuous from $[0,\delta]$ to $C^\infty(\T)$,
  $b_k^\varepsilon = a_k^\varepsilon$ in $I_k^\varepsilon$,
  $b_k^\varepsilon \geq 1$ in $\T$, and if $D_k^\varepsilon$ is the
  interior domain of the Jordan curve $\{b_k^\varepsilon(\zeta)\zeta :
  \zeta \in \T\}$, then $\varphi_k(\psi_\varepsilon^{-1}(\Omega))
  \subset \overline{D_k^\varepsilon}$.  These are first constructed in
  a local manner and then a partition of unity argument is employed.
  This construction is done as follows.

  For each $k$, define the following closed subsets of $\T \times
  [0,\delta]$:
  \begin{equation*}
    V_k = \bigcup_{0\leq\varepsilon\leq\delta}
    (I_k^\varepsilon \times \{\varepsilon\}),
    \qquad
    \widetilde{V}_k = \bigcup_{0\leq\varepsilon\leq\delta}
    (\widetilde{I}_k^\varepsilon \times \{\varepsilon\}).
  \end{equation*}
  (These are closed because $I_k^\varepsilon$ and
  $\widetilde{I}_k^\varepsilon$ depend continuously on $\varepsilon$.)
  Next, for every point $p = (\zeta,\varepsilon) \in
  \T\times[0,\delta]$ and every $k$, construct a function $c_k^p: W_p
  \to \R_+$, where $W_p$ is some neighborhood of $p$ in
  $\T\times[0,\delta]$.  If $\zeta \in I_k^\varepsilon$, choose $W_p$
  small enough so that $W_p \subset \widetilde{V}_k$ and put
  $c_k^p(\zeta',\varepsilon') = a_k^{\varepsilon'}(\zeta')$.  Note
  that if $(\zeta',\varepsilon')\in W_p$ and $r\zeta' \in \partial
  \varphi_k( \psi_{\varepsilon'}^{-1} (\Omega)) $, then $r =
  c_k^p(\zeta',\varepsilon')$.  If $\zeta \notin I_k^\varepsilon$,
  then choose $W_p$ small enough so that $W_p$ does not intersect
  $V_k$, and then choose as $c_k^p$ some $C^\infty$ function
  satisfying the property that if $(\zeta',\varepsilon') \in W_p$ and
  $r\zeta' \in \partial \varphi_k(\psi_{\varepsilon'}^{-1}(\Omega))$,
  then $r \leq c_k^p(\zeta',\varepsilon')$.  We also require $c_k^p
  \geq 1$ in all $W_p$.

  By compactness, choose a finite subfamily $\{W_{p_j}\}$ of
  $\{W_p\}$, which still covers $\T\times [0,\de]$.  Let
  $\{\tau_{p_j}\}$ be a $C^\infty$ partition of unity in $\T \times
  [0,\delta]$ subordinate to the cover $\{W_{p_j}\}$ and put
  \begin{equation*}
    b_k^\varepsilon(\zeta) = \sum_{p_j}
    \tau_{p_j}(\zeta,\varepsilon)c_k^{p_j}(\zeta,\varepsilon).
  \end{equation*}
  It is easy to see that $b_k^\varepsilon$ satisfies the required
  conditions because the functions $c_k^p$ satisfy them in a local
  manner.

  Let $D_k^\varepsilon$ be defined as above and let
  $\eta_k^\varepsilon$ be the Riemann map from $D_k^\varepsilon$ onto
  $\D$ such that $\eta_k^\varepsilon(0) = 0$ and
  $(\eta_k^\varepsilon)'(0) > 0$.  This exists since $\D \subset
  D_k^\varepsilon$.  Clearly, $\eta_k^\varepsilon \in
  A(\overline{\D})$ and $\|\eta_k^\varepsilon\|_{A(\overline{\D})}
  \leq 1$.

  We prove that $\varphi_k^\varepsilon = \eta_k^\varepsilon \circ
  \varphi_k \circ \psi_\varepsilon^{-1}$ depend continuously on
  $\varepsilon$.  Put $\beta = \max_{k,\varepsilon,\zeta}
  b_k^\varepsilon(\zeta)$, which is greater than $1$.  Let $\gamma :
  \R \to \R$ be a $C^\infty$ function such that $\gamma(r) = 0$ in a
  neighborhood of $0$, $\gamma(r) = r$ on $(\sigma,\infty)$ for some
  $\sigma\in(0,1)$ and $\gamma'(r) < \beta/(\beta-1)$ for all $r$.
  For each $\varepsilon \in [0,\delta]$, put
  \begin{equation}
    \label{eq:hk}
    h_k^\varepsilon(r\zeta) = \rho_k^\varepsilon(r)\zeta,\qquad
    \rho_k^\varepsilon(r) = r -
    \left(1-\frac{1}{b_k^\varepsilon(\zeta)}\right)\gamma(r),
    \qquad r \geq
    0,\ \zeta \in \T.
  \end{equation}
  The condition $\gamma'(r) < \beta/(\beta-1)$ implies that
  $(\rho_k^\varepsilon)' > 0$.  Thus, \eqref{eq:hk} defines maps
  $h_k^\varepsilon : \C \to \C$ which are diffeomorphisms from
  $\overline{D_k^\varepsilon}$ to $\overline{\D}$ and depend
  continuously on $\varepsilon$.  By \cite{BertrandGong}*{Corollary
    9.4}, the maps $\varepsilon \mapsto \eta_k^\varepsilon \circ
  (h_k^\varepsilon)^{-1}$ are continuous from $[0,\delta]$ to
  $C^\infty(\overline{\D})$.  Hence, the maps $\varepsilon \mapsto
  \varphi_k^\varepsilon$ are continuous from $[0,\delta]$ to
  $C^\infty(\overline{\Omega})$.

  Since by construction $|\varphi_k^\varepsilon| = 1$ in
  $\widetilde{J}_k$, the Schwartz reflection principle implies that
  each $\varphi_k^\varepsilon$ is analytic in some neighborhood $U_k$
  of $\Omega \cup J_k$ and that the map $\varepsilon \mapsto
  \varphi_k^\varepsilon$ is continuous from $[0,\delta]$ to
  $C^\infty(U_k)$.  As $\Phi_0 = \Phi$ is admissible, by continuity
  the maps $\Phi_\varepsilon$ must also be admissible for sufficiently
  small $\varepsilon$.  This finishes the proof.
\end{proof}

The following is a continuous ($\varepsilon$-dependent) version of the
right regularization for Fredholm operators of index $0$.

\begin{lemma}
  \label{lemma-regularization}
  Let $V$ be a Banach space, and $\{L_\varepsilon\}_{0\leq \varepsilon
    \leq \eps_0} \subset \B(V)$ be such that the map $\varepsilon
  \mapsto L_\varepsilon$ is continuous in the norm topology and $L_0 -
  I$ is compact.  Then there is a finite rank operator $P \in \B(V)$,
  some $0 < \delta \leq \varepsilon_0$ and operators
  $\{R_\varepsilon\}_{0 \leq \varepsilon \leq \delta},
  \{S_\varepsilon\}_{0 \leq \varepsilon \leq \delta} \subset \B(V)$
  such that the maps $\varepsilon \mapsto R_\varepsilon$ and
  $\varepsilon \mapsto S_\varepsilon$, $S_0 = I$, are continuous in
  the norm topology, and
  \begin{equation*}
    L_\varepsilon R_\varepsilon = I + P S_\varepsilon
  \end{equation*}
  holds for $0 \leq \eps \leq \delta$.
\end{lemma}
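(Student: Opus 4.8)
The plan is to reduce the assertion to the elementary fact that a small norm-perturbation of an invertible operator is again invertible, with norm-continuous inverse. Write $K = L_0 - I$, which is compact by hypothesis, so that $L_0 = I + K$ is a Fredholm operator of index $0$. Hence there is a finite-rank operator $F \in \B(V)$ such that $N_0 := L_0 + F$ is invertible; when $V$ has the approximation property one may even take $F = -K_1$ for a finite-rank $K_1$ with $\|K - K_1\| < 1$, so that $N_0 = I + (K - K_1)$ is inverted by a Neumann series. Since $\varepsilon \mapsto L_\varepsilon$ is continuous in the norm topology, so is $\varepsilon \mapsto N_\varepsilon$, where $N_\varepsilon := L_\varepsilon + F$. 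Therefore there is $\delta \in (0, \varepsilon_0]$ such that $N_\varepsilon$ is invertible for every $\varepsilon \in [0,\delta]$, and, expanding $N_\varepsilon^{-1}$ in a Neumann series around $N_0$ that converges uniformly on $[0,\delta]$, the map $\varepsilon \mapsto N_\varepsilon^{-1}$ is continuous in the norm topology there.

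With this in hand I would set
\begin{equation*}
  R_\varepsilon = N_\varepsilon^{-1}, \qquad S_\varepsilon = N_0 N_\varepsilon^{-1}, \qquad P = -F N_0^{-1}, \qquad 0 \le \varepsilon \le \delta.
\end{equation*}
Then $\varepsilon \mapsto R_\varepsilon$ and $\varepsilon \mapsto S_\varepsilon$ are continuous in the norm topology, $S_0 = N_0 N_0^{-1} = I$, and $P$ has finite rank because its range is contained in the finite-dimensional range of $F$. Since $L_\varepsilon = N_\varepsilon - F$, one computes
\begin{equation*}
  L_\varepsilon R_\varepsilon = (N_\varepsilon - F) N_\varepsilon^{-1} = I - F N_\varepsilon^{-1} = I - F N_0^{-1} N_0 N_\varepsilon^{-1} = I + P S_\varepsilon,
\end{equation*}
which is precisely the identity required, for $0 \le \varepsilon \le \delta$.

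The one point that needs genuine care is the prescribed shape of the error term: it must be a \emph{fixed} finite-rank operator $P$, independent of $\varepsilon$, times a norm-continuous family $S_\varepsilon$ with $S_0 = I$. The naive regularizer $R_\varepsilon = (L_\varepsilon + F)^{-1}$ produces the error $-F(L_\varepsilon + F)^{-1}$, whose value at $\varepsilon = 0$ is $-F N_0^{-1}$ rather than $-F$, so one cannot take $P = -F$ and $S_0 = I$ at the same time. The remedy is exactly to absorb the factor $N_0^{-1}$ into the definition of $P$ and to compensate by inserting $N_0$ into $S_\varepsilon$, as above. Once this bookkeeping is arranged, the remaining verifications are routine, using only the norm-continuity of inversion near an invertible operator and the fact that $F A$ has finite rank whenever $F$ does.
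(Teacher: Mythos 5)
Your proposal is correct and follows essentially the same route as the paper: the paper takes a right regularizer $R_0$ with $L_0R_0 = I+P$ ($P$ finite rank), sets $B_\varepsilon = I + (L_\varepsilon - L_0)R_0$, and puts $R_\varepsilon = R_0B_\varepsilon^{-1}$, $S_\varepsilon = B_\varepsilon^{-1}$; with your choice $R_0 = (L_0+F)^{-1} = N_0^{-1}$ one has $B_\varepsilon = N_\varepsilon N_0^{-1}$, so your $R_\varepsilon$, $S_\varepsilon$ and $P = -FN_0^{-1}$ coincide exactly with the paper's. The only cosmetic difference is that you construct the initial regularizer explicitly via a finite-rank perturbation to invertibility of the index-zero Fredholm operator $L_0$, where the paper simply invokes the standard existence statement.
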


\begin{proof}
  Since $L_0 - I$ is compact, it is well know that there is a finite
  rank operator $P$ and an operator $R_0$ such that $LR_0 = I + P$.
  Let $B_\varepsilon = I + (L_\varepsilon - L_0)R_0$.  Then there is
  some $\delta > 0$ such that $B_\varepsilon$ is invertible for $0
  \leq \varepsilon \le \delta$.  We have $L_\varepsilon R_0
  B_\varepsilon^{-1} = I + PB_\varepsilon^{-1}$, so the lemma holds
  with $R_\varepsilon = R_0 B_\varepsilon^{-1}$ and $S_\varepsilon =
  B_\varepsilon^{-1}$.
\end{proof}

\begin{lemma}
  \label{lemma-0}
  Let $\Phi:\overline{\Omega}\to\overline{\D}^n$ be admissible.
  Assume that there are operators $T \in \B(H)$ and
  $C_1,\ldots,C_n\in\B(H)$ such that $\overline{\D}$ is a complete
  $K'$-spectral set for every $C_k$, $k=1,\ldots,n$.  Assume that if
  $f \in \Rat(\overline{\Omega})$ can be written as in
  \eqref{eq:C_k0}, then \eqref{eq:C_k} holds $($see the statement of
  Theorem~\ref{main2}$)$.  Then $\overline{\Omega}$ is a complete
  $K$-spectral set for $T$ for some $K$ depending on $\Omega$, $\Phi$,
  $K'$ and $S_\Lambda(T)$.  Furthermore,
  \begin{equation}
    \label{eq:this-*}
    \|f(T)\| \leq C \|f\|_{A(\overline{\Omega})\otimes M_s},
    \qquad \forall f \in (\mathcal{A}_\Phi \cap
    \Rat(\overline{\Omega})) \otimes M_s,
  \end{equation}
  where $C$ is a constant depending only on $\Omega$, $\Phi$ and $K'$,
  and not on $T$.
\end{lemma}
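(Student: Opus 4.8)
The plan is a three–stage bootstrap. First I produce a $T$-independent norm estimate on a finite-codimensional subspace of $A(\overline{\Omega})$, using only the hypothesis on sums and no functional calculus for $T$; from this I deduce that $\overline{\Omega}$ is complete $K$-spectral for $T$, so that $T$ acquires a continuous $A(\overline{\Omega})$-calculus. Then I identify each $C_k$ with $\varphi_k(T)$. Finally I feed the base estimate and this identification into Lemma~\ref{lemma-finite-codim-Aphi} to obtain \eqref{eq:this-*}.

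\emph{Stage 1.} By Theorem~\ref{article1-thm1} there are bounded operators $F_k\colon A(\overline{\Omega})\to A(\overline{\D})$ such that $L_0f:=\sum_{k=1}^nF_k(f)\circ\varphi_k$ differs from the identity by a compact operator, so $L_0$ is Fredholm of index $0$; the standard right regularization of $L_0$ (see the proof of Lemma~\ref{lemma-regularization}) then gives a finite rank $P$ and a bounded $R$ on $A(\overline{\Omega})$ with $L_0R=I+P$. Put $X=\ker P$, a closed subspace of finite codimension. Since $L_0(Rf)=f$ for $f\in X$ and $L_0f\in\mathcal A_\Phi$ for every $f$, one has $X\subseteq\operatorname{ran}L_0\subseteq\mathcal A_\Phi$. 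Given $g\in(X\cap\Rat(\overline{\Omega}))\otimes M_s$, set $f=(R\otimes\id_s)(g)$; as $P$ annihilates $X$, $(L_0\otimes\id_s)(f)=g$, i.e.\ entrywise $g=\sum_k h_k\circ\varphi_k$ with $h_k=\big((F_kR)\otimes\id_s\big)(g)\in A(\overline{\D})\otimes M_s$ and rational left-hand side, so the hypothesis \eqref{eq:C_k0}$\Rightarrow$\eqref{eq:C_k}, applied entrywise, gives $g(T)=\sum_k h_k(C_k)$. Using that $\overline{\D}$ is complete $K'$-spectral for each $C_k$ and that $F_kR$ is completely bounded with $\|F_kR\|_{\cb}=\|F_kR\|\le\|F_k\|\,\|R\|$ (Lemma~\ref{lemma-cb}),
\begin{equation*}
  \|g(T)\|\le\sum_{k=1}^n K'\|h_k\|\le K'\Big(\sum_{k=1}^n\|F_k\|\Big)\|R\|\;\|g\|_{A(\overline{\Omega})\otimes M_s}=:C_1\,\|g\|_{A(\overline{\Omega})\otimes M_s},
\end{equation*}
where $C_1$ depends only on $\Omega$, $\Phi$, $K'$. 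This is exactly the hypothesis of Lemma~\ref{lemma-finite-codim} for the closed finite-codimensional subspace $X$, which therefore yields that $\overline{\Omega}$ is complete $K$-spectral for $T$ with $K=K(\Omega,\Phi,K',S_\Lambda(T))$. This is the first assertion of the lemma, and in particular $T$ now has a continuous $A(\overline{\Omega})$-calculus.

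\emph{Stage 2.} To see that $\varphi_k(T)=C_k$, consider the bounded linear maps $\rho_k,\chi_k\colon A(\overline{\D})\to\B(H)$ given by $\rho_k(h)=(h\circ\varphi_k)(T)$ and $\chi_k(h)=h(C_k)$. For $(h_1,\dots,h_n)$ in $\mathcal D:=\{(h_k)\colon\sum_kh_k\circ\varphi_k\in\Rat(\overline{\Omega})\}$ one has $\sum_k\rho_k(h_k)=\big(\sum_kh_k\circ\varphi_k\big)(T)=\sum_k\chi_k(h_k)$, the first equality by linearity of the $A(\overline{\Omega})$-calculus and the second by the hypothesis. Now $\mathcal D$ is dense in $A(\overline{\D})^n$: the bounded map $\Psi\colon(h_k)\mapsto\sum_kh_k\circ\varphi_k$ has range $\mathcal S$ containing $\operatorname{ran}L_0\supseteq X$, so $\mathcal S$ is a closed finite-codimensional subspace of $A(\overline{\Omega})$ and $\Psi$ is open onto $\mathcal S$; since $\Rat(\overline{\Omega})$ is dense in $A(\overline{\Omega})$, $\Rat(\overline{\Omega})\cap\mathcal S$ is dense in $\mathcal S$, whence $\mathcal D=\Psi^{-1}(\Rat(\overline{\Omega})\cap\mathcal S)$ is dense. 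Consequently the bounded map $(h_k)\mapsto\sum_k(\rho_k-\chi_k)(h_k)$ vanishes identically, and evaluating its $k$th component at the function $z\mapsto z$ gives $\varphi_k(T)=C_k$.

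\emph{Stage 3.} Since $\varphi_k(T)=C_k$ and $\overline{\D}$ is complete $K'$-spectral, hence $K'$-spectral, for $C_k$, the hypotheses of Lemma~\ref{lemma-finite-codim-Aphi} are met with the very same $X$ and $C_1$; that lemma then gives $\|f(T)\|\le K\|f\|_{A(\overline{\Omega})\otimes M_s}$ for all $f\in(X+\mathcal A_\Phi)\otimes M_s=\mathcal A_\Phi\otimes M_s$ (using $X\subseteq\mathcal A_\Phi$), with $K=K(\Omega,\Phi,K')$ independent of $T$; restricting to $f\in(\mathcal A_\Phi\cap\Rat(\overline{\Omega}))\otimes M_s$ gives \eqref{eq:this-*}. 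I expect the crux to be Stage 2: the hypothesis constrains $T$ only through genuine rational functions, yet no single composition $h\circ\varphi_k$ with $h$ nonconstant is rational, so the identity $\varphi_k(T)=C_k$ cannot be read off by substituting $f=\varphi_k$ and must instead be extracted via the open-mapping/density argument.
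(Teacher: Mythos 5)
Your proof is correct and follows essentially the same route as the paper: the regularization $L_0R=I+P$ coming from Theorem~\ref{article1-thm1}, the $T$-independent estimate on $(\ker P\cap\Rat(\overline{\Omega}))\otimes M_s$ obtained by feeding $(L_0\otimes\id_s)(R\otimes\id_s)f=f$ into the hypothesis \eqref{eq:C_k0}$\Rightarrow$\eqref{eq:C_k}, and then Lemmas~\ref{lemma-finite-codim} and~\ref{lemma-finite-codim-Aphi}. Your Stage~2 additionally supplies a careful open-mapping/density justification of the identification $C_k=\varphi_k(T)$, which is needed to invoke Lemma~\ref{lemma-finite-codim-Aphi} but which the paper only asserts (in the discussion following Theorem~\ref{main2}) rather than proves; this is a welcome filling-in of a detail, not a different argument.
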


The main point of \eqref{eq:this-*} is that, under the hypotheses of
this Lemma, $\mathcal{A}_\Phi$ is a closed subspace of finite
codimension in $A(\overline{\Omega})$.  Thus, \eqref{eq:this-*} shows
that, in a space of finite codimension, the inequality $\|f(T)\|\leq
C\|f\|$ holds with a constant independent of $T$.

\begin{proof}[Proof of Lemma~\ref{lemma-0}]
  Use Theorem~\ref{article1-thm1} to obtain operators $F_k$ as in the
  statement of the theorem.  Denote by $L \in
  \B(A(\overline{\Omega}))$ the operator defined by $L(f) = \sum
  F_k(f)\circ \varphi_k$.  Since $I-L$ is compact, there exist an
  operator $R$ and a finite rank operator $P$ such that $LR = I + P$.
  The space $X = \ker P$ has finite codimension in
  $A(\overline{\Omega})$ and does not depend on $T$.  We will now
  check that \eqref{eq:dm-**} holds for some constant $C$ independent
  of $T$.

  Take $f \in (X \cap \Rat(\overline{\Omega})) \otimes M_s$ and put $g
  = (R\otimes \id_s)f$.  Then $(L \otimes \id_s)g = f$, and so
  by~\eqref{eq:C_k},
  \begin{equation*}
  f(T) = \sum_{k=1}^n [(F_k\otimes \id_s)(g)](C_k).
  \end{equation*}
  Since $\overline{\D}$ is complete $K'$-spectral for $C_k$,
  \begin{equation*}
  \begin{split}
    \|f(T)\| &\leq \sum_{k=1}^n K'\|F_k\|_\cb
    \|g\|_{A(\overline{\Omega})\otimes M_s} \leq \sum_{k=1}^n
    K'\|F_k\|_\cb \|R\|_\cb \|f\|_{A(\overline{\Omega})\otimes M_s}
    \\
    &= \sum_{k=1}^n K'\|F_k\|\cdot \|R\|\cdot
    \|f\|_{A(\overline{\Omega})\otimes M_s},
  \end{split}
  \end{equation*}
  where the last equality uses Lemma~\ref{lemma-cb}.  Thus
  \eqref{eq:dm-**} holds with $C = \sum K'\|F_k\|\cdot\|R\| < \infty$.
  Apply Lemma~\ref{lemma-finite-codim-Aphi} to get \eqref{eq:this-*}.
  The remaining part of the lemma follows from
  Lemma~\ref{lemma-finite-codim}.
\end{proof}

\section{Proofs of Theorems 3 and 4}
\label{proofs-main}

We first give the proof of Theorem~\ref{main2}, as it is simpler than
that of Theorem~\ref{main} and both proofs follow the same general
idea.

\begin{proof}[Proof of Theorem~\ref{main2}]
  The first part of Theorem~\ref{main2} is already contained in
  Lemma~\ref{lemma-0}.  For the case when $\Phi$ is injective and
  $\Phi'$ does not vanish, use Theorem~\ref{article1-thm2} (see
  Section~\ref{sec:article1}).  Then \eqref{eq:this-*} implies that
  $\overline{\Omega}$ is a complete $K$-spectral set for $T$, with $K$
  independent of~$T$.
\end{proof}

To prove Theorem~\ref{main}, in the case when $\sigma(T) \subset
\Omega$, one can argue as in the proof of Theorem~\ref{main2}, putting
$C_k = \varphi_k(T)$ and using the Cauchy-Riesz functional calculus
for $T$ to get \eqref{eq:C_k}.  However, such a direct proof will not
work in the general case.  The idea then is to apply a shrinking
$\{\psi_\varepsilon\}$ for $\Omega$ to obtain operators $T_\varepsilon
= \psi_\varepsilon(T)$ which have $\sigma(T_\varepsilon) \subset
\Omega$, so that the above argument is again valid.  The difficulties
reside in constructing admissible functions $\Phi_\varepsilon =
(\varphi_1^\varepsilon,\ldots,\varphi_n^\varepsilon)$ adapted to
$T_\varepsilon$, in the sense that each
$\varphi_k^\varepsilon(T_\varepsilon)$ has $\overline{\D}$ as a
complete $K'$-spectral set, as well as in passing to the limit as
$\varepsilon$ tends to $0$.

\begin{proof}[Proof of Theorem~\ref{main}]
  Let $T \in \B(H)$ with $\sigma(T) \subset \overline{\Omega}$ and
  such that for $k = 1,\ldots,n$, $\overline{\D}$ is a complete
  $K'$-spectral set for $\varphi_k(T)$.  We must prove that
  $\overline{\Omega}$ is a complete $K$-spectral set for $T$ with $K$
  depending on $\overline{\Omega}$, $\Phi$, $K'$ and $S_\Lambda(T)$.

  By Lemma~\ref{lemma-shrinking}, there is a shrinking
  $\{\psi_\varepsilon\}$ for $\Omega$.  Apply
  Lemma~\ref{lemma-build-shrinking} to obtain a collection of
  admissible functions $\{\Phi_\varepsilon\}_{0 \leq \varepsilon \leq
    \varepsilon_0}$ such that $\overline{\D}$ is a complete
  $K'$-spectral set for $\varphi_k^\varepsilon(\psi_\varepsilon(T))$,
  and such that the maps $\varepsilon \mapsto \varphi_k^\varepsilon$
  are continuous from $[0,\varepsilon_0]$ to $C^\infty(U_k)$, where
  $U_k$ is a neighborhood of $J_k$.  Then use
  Lemma~\ref{article1-lemma} to get operators $L_\varepsilon$, where
  \begin{equation*}
    L_\varepsilon(f) = \sum F_k^\varepsilon(f)\circ
    \varphi_k^\varepsilon.
  \end{equation*}
  Since $L_0 - I$ is compact, Lemma~\ref{lemma-regularization} (with
  $V = A(\overline{\Omega})$) yields operators
  $P,R_\varepsilon,S_\varepsilon:A(\overline{\Omega})\to
  A(\overline{\Omega})$, where $\varepsilon\in [0,\delta]$, with the
  properties stated in the lemma.

  Next we wish to apply Lemma~\ref{lemma-lambdas}.  To this end, fix
  $Q : \operatorname{ran} P \to \C^r$ an isomorphism, where $r$ is the
  rank of $P$, put $\Sigma_\varepsilon = QPS_\varepsilon$, so that
  $\Sigma_\varepsilon : A(\overline{\Omega}) \to \C^r$,
  $\Sigma_\varepsilon$ depends continuously on $\varepsilon$ in the
  norm topology and $\Sigma_0 = QPS_0 = QP$ is surjective, and set
  $T_\varepsilon = \psi_\varepsilon(T)$.  Note that $T_\varepsilon$
  depends continuously on $\varepsilon$ in the norm topology because
  $\psi_\varepsilon$ depends continuously on $\varepsilon$ in the
  topology of uniform convergence on compact subsets of $U$, where $U$
  is some open neighborhood of $\sigma(T)$.

  It is necessary to check that \eqref{eq:lambdas-*} holds.  For this,
  take $f \in (\ker \Sigma_\varepsilon \cap \Rat(\overline{\Omega}))
  \otimes M_s$, put $g = (R_\varepsilon \otimes \id_s)f$ and note that
  $f = (L_\varepsilon \otimes \id_s)g$.  Since $\sigma(T_\varepsilon)
  \subset \Omega$, an application of the Cauchy-Riesz functional
  calculus gives
  \begin{equation*}
  f(T_\varepsilon) = \sum_{k=1}^n [(F_k^\varepsilon\otimes
  \id_s)(g)](\varphi_k^\varepsilon(T_\varepsilon)).
  \end{equation*}
  Therefore, by Lemma~\ref{lemma-cb}, and since $\|F_k^\varepsilon\|
  \leq C$ (coming from Lemma~\ref{article1-lemma}),
  \begin{equation*}
    \|f(T_\varepsilon)\| \leq \sum_{k=1}^n
    K'\|F_k^\varepsilon\|_\cb\|g\|_{A(\overline{\Omega}) \otimes M_s}
    \leq \sum_{k=1}^n K'C\|R_\varepsilon\|
    \|f\|_{A(\overline{\Omega})\otimes M_s}.
  \end{equation*}
  Since $R_\varepsilon$ depends continuously on $\varepsilon$,
  \eqref{eq:lambdas-*} holds, as desired.

  Apply Lemma~\ref{lemma-lambdas} to obtain for all $s\geq 1$,
  \begin{equation*}
    \|f(T)\| \leq C'\|f\|_{A(\overline{\Omega})\otimes M_s}, \qquad
    \forall f\in (\ker \Sigma_0 \cap \Rat(\overline{\Omega}))\otimes
    M_s.
  \end{equation*}
  By Lemma~\ref{lemma-finite-codim}, this yields that
  $\overline{\Omega}$ is a complete $K$-spectral set for $T$, with $K$
  depending on $\Omega$, $\Phi$ and $S_\Lambda(T)$.  Therefore, $\Phi$
  is a quasi-uniform strong test collection.

  In the case that $\Phi$ is injective and $\Phi'$ does not vanish on
  $\Omega$, Theorem~\ref{article1-thm2} and
  Lemma~\ref{lemma-finite-codim-Aphi} together imply that
  $\overline{\Omega}$ is a complete $K$-spectral set for $T$, with $K$
  independent of $T$.
\end{proof}

\section{Weakly admissible functions}
\label{weakly-admissible}

In this section we will expand the class of functions under
consideration to a wider class that we call weakly admissible
functions.  The main goal of this class is to replace condition (f) in
the definition of an admissible function (see
Section~\ref{subsec:admiss-funct-famil}) by a weaker separation
condition.  In particular, a collection of functions which includes
inner functions (i.e., functions with modulus 1 in all
$\partial\Omega$) may be weakly admissible, though not admissible,
except in trivial cases.

Let $\zeta \in \partial\Omega$.  A \emph{right neighborhood} of
$\zeta$ in $\partial\Omega$ is understood to be the image
$\gamma([0,\varepsilon))$, where the function $\gamma : [0,\epsilon)
\to \partial \Omega$ is continuous and injective, $\gamma(0) = \zeta$,
and as $t$ increases $\gamma(t)$ follows the positive orientation of
$\partial\Omega$.  Define the \emph{left neighborhoods} of $\zeta$ in
a similar manner.

If $\Psi \subset A(\overline{\Omega})$ is a collection of functions
taking $\Omega$ into $\D$ and $\zeta \in \partial\Omega$, set
\begin{equation*}
  \Psi_\zeta^+ = \{ \psi \in \Psi : |\psi| = 1 \text{ in some right
    neighborhood of } \zeta\},
\end{equation*}
and
\begin{equation*}
  \Psi_\zeta^- = \{ \psi \in \Psi : |\psi| = 1 \text{ in some left
    neighborhood of } \zeta\}.
\end{equation*}

\begin{definition}
  Let $\Omega$ be a domain whose boundary is a disjoint finite union
  of piecewise analytic Jordan curves such that the interior angles of
  the corners of $\partial\Omega$ are in $(0,\pi]$.  Then $\Psi =
  (\psi_1,\ldots,\psi_n) : \overline{\Omega} \to \overline{\D}^n$,
  $\psi_k \in A(\overline{\Omega})$ for $k = 1,\ldots,n$, is
  \emph{weakly admissible} if for $\Gamma_k = \{\zeta \in \partial
  \Omega: |\psi_k(\zeta)| = 1\}$ in place of $J_k$ and a constant
  $\alpha$, $0 < \alpha \leq 1$, it is the case that conditions
  (a)--(e) for an admissible function hold, and additionally:
  \begin{enumerate}[(a)]
  \item[(f$'$)] \quad $\forall \zeta \in \partial\Omega,\ \forall z
    \in \partial \Omega, z\neq\zeta, \ \exists\psi \in \Psi_\zeta^+ :
    \psi(\zeta)\neq\psi(z).$
  \item[(g$'$)] \quad $\forall \zeta \in \partial\Omega,\ \forall z
    \in \partial \Omega, z\neq\zeta, \ \exists\psi \in \Psi_\zeta^- :
    \psi(\zeta)\neq\psi(z).$
  \end{enumerate}
\end{definition}

In fact, it is easy to see that conditions (a) and (b) follow formally
from conditions (c)--(e), (f$'$) and~(g$'$).

\begin{lemma}
  \label{passing}
  Let $\Psi : \overline{\Omega} \to \overline{\D}^n$ be a weakly
  admissible function.  Then there is an admissible function $\Phi :
  \overline{\Omega} \to \overline{\D}^m$, $\Phi =
  (\varphi_1,\ldots,\varphi_m)$, such that its components $\varphi_k$
  are of the form
  \begin{equation*}
    \varphi_k =
    (h_{1,k}\circ\psi_1)\cdot\,\cdots\,\cdot(h_{n,k}\circ\psi_n),
  \end{equation*}
  where $h_{j,k} : \overline{\D} \to \overline{\D}$ and $h_{j,k} \in
  A(\overline{\D})$.
\end{lemma}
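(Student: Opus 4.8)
The plan is to build the components of $\Phi$ by a construction that is local near each boundary point, and then to patch the pieces together by compactness. Since each factor $h_{j,k}\circ\psi_j$ maps $\overline{\Omega}$ into $\overline{\D}$, the product $\varphi_k$ does too, and as $\varphi_k$ will be non-constant, $|\varphi_k|<1$ on $\Omega$; hence the separation condition (f) only needs to be checked against boundary points, not against all of $\overline{\Omega}$. I would therefore reduce to producing, for each $\zeta\in\partial\Omega$, a function $\varphi_\zeta$ of the prescribed product form and a closed arc $J_\zeta\ni\zeta$ of $\partial\Omega$ such that $|\varphi_\zeta|=1$ on $J_\zeta$, $\varphi_\zeta$ extends analytically with H\"older-continuous, non-vanishing derivative to a neighbourhood of $\operatorname{int}J_\zeta$ together with $\Omega$, the sector condition (d) holds at the endpoints of $J_\zeta$, and $\varphi_\zeta$ is injective on $J_\zeta$ with $\varphi_\zeta(\partial\Omega\setminus J_\zeta)$ disjoint from $\varphi_\zeta(J_\zeta)$. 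A finite subfamily of the arcs $J_\zeta$ covers $\partial\Omega$, and the corresponding functions (relabelled, with unused slots filled by $h_{j,k}\equiv 1$) form the required admissible $\Phi$; I would allow these arcs to overlap rather than insisting that they meet only at endpoints.

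The local construction would rest on the following observation about a weakly admissible $\Psi$. The endpoints of the arcs $\Gamma_k=\{\zeta\in\partial\Omega:|\psi_k(\zeta)|=1\}$ form a finite set, and condition (d) that is part of weak admissibility forbids any point at which two of the $\Gamma_k$ lie on opposite sides from being a smooth point of $\partial\Omega$: at such a point the two sectors demanded by (d) would both be pushed to the $\Omega$-side of the boundary, and so could not meet outside $\overline{\Omega}$. Hence at a smooth point $\zeta$, the point $\zeta$ lies in the interior of some $\Gamma_{k_0}$, and at least one of $\Psi_\zeta^+,\Psi_\zeta^-$ consists entirely of functions unimodular on a full two-sided neighbourhood of $\zeta$. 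I would then take $\psi_{k_0}$ as a ``backbone'' (by (c) and (e) for $\Psi$ it is analytic across $\Gamma_{k_0}$ with non-vanishing H\"older-continuous derivative, hence a local homeomorphism near $\zeta$), and, using whichever of (f$'$), (g$'$) supplies two-sided-unimodular separators, together with continuity and a compactness argument on $\partial\Omega$ away from $\zeta$, pick finitely many $\psi_{(1)},\dots,\psi_{(N)}$, all unimodular on a common neighbourhood $I$ of $\zeta$, that collectively separate a neighbourhood of $\zeta$ from $\partial\Omega\setminus I$. I would set
\begin{equation*}
  \varphi_\zeta=\psi_{k_0}\cdot\prod_{i=1}^N\bigl(b_{\lambda_i}\circ\psi_{(i)}\bigr),
\end{equation*}
where $b_{\lambda_i}$ is a disk automorphism with a zero at $\lambda_i$, with $|\lambda_i|$ close to $1$ and $\lambda_i/|\lambda_i|$ in the complementary arc of $\psi_{(i)}(I)\subset\T$, so that $|b_{\lambda_i}'|$ is as small as desired on $\psi_{(i)}(I)$. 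Then every factor is unimodular on $I$, so $|\varphi_\zeta|=1$ on the arc $J_\zeta\subset I$ around $\zeta$ cut out by the corresponding level sets; the $b_{\lambda_i}$ are injective, so separation survives; on $J_\zeta$ the function $\varphi_\zeta$ is a perturbation of the local homeomorphism $\psi_{k_0}$ by factors arbitrarily close to constants, so it is injective for $J_\zeta$ small and $|\varphi_\zeta'|$ is bounded below there by (e); and analyticity, the H\"older estimate, and the sector condition at the endpoints of $J_\zeta$ are all inherited from the corresponding properties of the $\psi_j$.

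At a corner $z_0$ of $\Omega$ the scheme would be the same, except that now $z_0$ is permitted to be a common endpoint of two arcs of $\Phi$, one built along each of the two boundary arcs meeting at $z_0$, with $\psi_k$ (resp.\ $\psi_l$) as backbone on the side where it is unimodular and separators drawn from $\Psi_{z_0}^\pm$ accordingly; condition (d) for this pair would be inherited from (d) for $\Psi$ at $z_0$ after shrinking the sectors into the smaller domains of analyticity of the products, which is harmless because $z_0$ is a genuine corner. The finitely many endpoints of the $\Gamma_k$ that are smooth boundary points need no special treatment, since by the observation above each lies in the interior of some $\Gamma_{k_0}$ and is swallowed by the interior of a $J_\zeta$ from the generic construction.

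The hard part will be the geometric bookkeeping: arranging the arcs $J_\zeta$, and the sectors attached to them, so that two arcs of the final family share an endpoint only at a corner of $\Omega$---the only location where the delicate half of condition (d) can be obtained by inheritance from $\Psi$---and checking that conditions (c) and (e) survive passage to the products uniformly over the finitely many overlapping arcs. Once the right separators are identified---the one-sided separation from (f$'$), (g$'$) that stays unimodular on a two-sided neighbourhood---the analytic estimates are routine; the care is in the case analysis at corners.
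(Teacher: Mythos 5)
Your overall skeleton (a local construction near each boundary point followed by a compactness argument, with the new test functions built as products of disk functions post-composed with the $\psi_j$) is the same as the paper's, but the specific choice of post-compositions breaks the argument at the decisive step. You take the extra factors to be disk automorphisms $b_{\lambda_i}\circ\psi_{(i)}$. A disk automorphism preserves $\T$, so $|b_{\lambda_i}\circ\psi_{(i)}|=1$ exactly where $|\psi_{(i)}|=1$, i.e.\ on all of $\Gamma_{(i)}$, and your product $\varphi_\zeta$ is unimodular on $\Gamma_{k_0}\cap\bigcap_i\Gamma_{(i)}$, which is in general far larger than a small arc around $\zeta$ (in the motivating case where the $\psi_j$ are inner, it is all of $\partial\Omega$). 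Condition (f) then has to be extracted purely from the values of the product, and here the claim ``the $b_{\lambda_i}$ are injective, so separation survives'' is false: that each factor separates $\zeta$ from $z$ does not imply that the product does, since $f(\zeta)\neq f(z)$ and $g(\zeta)\neq g(z)$ are compatible with $f(\zeta)g(\zeta)=f(z)g(z)$. Worse, you arrange $|b_{\lambda_i}'|$ to be tiny on $\psi_{(i)}(I)$, so near $J_\zeta$ the product is essentially $c\,\psi_{k_0}$ for a constant $c$; since $\psi_{k_0}$ need not be injective on $\Gamma_{k_0}$ (that is exactly what weak admissibility permits), $\varphi_\zeta$ will repeat its $J_\zeta$-values at the other preimages $\psi_{k_0}^{-1}(\psi_{k_0}(\zeta))$, and (f) fails.

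The paper's mechanism, which you need, is different in one essential respect: the post-composed functions $h_\psi^+$ are chosen unimodular \emph{only} on the small arc $\psi(J_\zeta^+)\subset\T$ and of modulus strictly less than $1$ elsewhere on $\overline{\D}$ (they are not automorphisms). Then $|h_\psi^+\circ\psi|=1$ exactly on $Q_\psi=\psi^{-1}(\psi(J_\zeta^+))$, and since (f$'$) gives $\bigcap_{\psi\in\Psi_\zeta^+}\psi^{-1}(\{\psi(\zeta)\})=\{\zeta\}$, for $J_\zeta^+$ small one gets $\bigcap_\psi Q_\psi=J_\zeta^+$; hence the product has modulus strictly less than $1$ off $J_\zeta^+$, which yields the global half of condition (f) by a modulus gap, while injectivity on $J_\zeta^+$ (each factor mapping into a small arc of $\T$) gives the local half. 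This also removes the need for your two-sided ``backbone'' and the accompanying observation about condition (d) at smooth points, which is the shakiest part of your corner analysis: the paper simply builds one-sided arcs $J_\zeta^+$ and $J_\zeta^-$ from $\Psi_\zeta^+$ and $\Psi_\zeta^-$ at every $\zeta$ and covers $\partial\Omega$ by finitely many of their unions. Your reduction of (f) to boundary points via $|\varphi_k|<1$ on $\Omega$ is fine, but the boundary separation itself is exactly what your construction does not deliver.
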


\begin{proof}
  First, fix some $\zeta \in \partial\Omega$.  For each $\psi \in
  \Psi_\zeta^+$, put $P_\psi = \psi^{-1}(\{\psi(\zeta)\})$, which is a
  finite set of points of $\partial \Omega$.  By condition (f$'$) in
  the definition of a weakly admissible function, $\bigcap_{\psi\in
    \Psi_\zeta^+} P_\psi = \{\zeta\}$.  Let $J_\zeta^+$ be the closure
  of a sufficiently small right neighborhood of $\zeta$.  For $\psi
  \in \Psi_\zeta^+$, put $Q_\psi = \psi^{-1}(\psi(J_\zeta^+))$.  If
  $J_\zeta^+$ is small enough, then each set $Q_\psi$ is a union of
  disjoint right neighborhoods of each of the points in $P_\psi$.
  Since $\bigcap_{\psi\in \Psi_\zeta^+} P_\psi = \{\zeta\}$, it can
  then be assumed that $\bigcap_{\psi\in \Psi_\zeta^+} Q_\psi =
  J_\zeta^+$.

  Next, for each $\psi \in \Psi_\zeta^+$, construct a function
  $h_\psi^+ \in A(\overline{\D})$ such that the function
  \begin{equation*}
  \psi_\zeta^+ = \prod_{\psi \in \Psi_\zeta^+} h_\psi^+ \circ \psi
  \end{equation*}
  associated with $J_\zeta^+$ satisfies $|\psi_\zeta^+| = 1$ in
  $J_\zeta ^+$ and $|\psi_\zeta^+|< 1$ in $\partial\Omega\setminus
  J_\psi^+$.  This is done as follows.

  Take $\psi \in \Psi_\zeta^+$.  Choose
  a function
  $h_\psi^+$
  satisfying the following conditions:
  \begin{itemize}
  \item $|h_\psi^+| = 1$ in $\psi(J_\zeta^+)$ and $|h_\psi^+| < 1$ in
    $\partial\Omega\setminus \psi(J_\zeta^+)$;
  \item $h_\psi^+$ maps $\psi(J_\zeta^+)$ bijectively onto a small arc
    of $\T$;
  \item $h_\psi^+$ is analytic on some open set $U \supset \D$ such
    that the interior of $\psi(J_\zeta^+)$ relative to $\T$ is
    contained in $U$, and $(h_\psi^+)'$ is H\"older $\alpha$ in $U$;
  \item $|(h_\psi^+)'| \geq C > 0$ in $\psi(J_\zeta^+)$;
  \item If $\zeta$ is an endpoint of the set $\{w\in\partial\Omega:
    |\psi(w)|=1\}$ and $S(\zeta)$ is the sector that appears on
    condition (d) in the definition of an admissible function (for
    $\varphi_k = \psi$), then $\psi(S_k(\zeta))\subset U$.
  \end{itemize}

  Then $|h_\psi^+ \circ \psi| = 1$ in $Q_\psi$, and $|h_\psi^+ \circ
  \psi| < 1$ in $\partial\Omega\setminus Q_\psi$.  Since
  $|\psi_\zeta^+(z)| = 1$ only when $|h_\psi^+(\psi(z))|=1$ for every
  $\psi\in\Psi_\zeta^+$ (that is, when $z \in \bigcap_{\psi\in
    \Psi_\zeta^+} Q_\psi = J_\zeta^+$), we get that $|\psi_\zeta^+| =
  1$ in $J_\zeta^+$ and $|\psi_\zeta^+| < 1$ in $\partial\Omega
  \setminus J_\zeta^+$.  Also, since $h_\psi^+(\psi(J_\zeta^+))$ is a
  small arc of $\T$, it follows that $\psi_\zeta^+$ maps $J_\zeta^+$
  bijectively onto some arc of $\T$.

  Similarly, construct an arc $J_\zeta^-$ which is the closure of a
  small left neighborhood of $\zeta$, and a corresponding function
  $\psi_\zeta^-$.  By compactness, we can choose a finite set of
  points $\zeta_1,\ldots,\zeta_r$ such that $J_{\zeta_k}^- \cup
  J_{\zeta_k}^+$, $k=1,\ldots,r$, cover all $\partial \Omega$.  Rename
  the functions $\psi_{\zeta_1}^-, \psi_{\zeta_1}^+, \ldots,
  \psi_{\zeta_r}^-, \psi_{\zeta_r}^+$ as $\varphi_1,\ldots,\varphi_m$
  and the corresponding arcs $J_{\zeta_1}^-,J_{\zeta_1}^+, \ldots,
  J_{\zeta_r}^-, J_{\zeta_r}^+$ as $J_1,\ldots,J_m$.  Functions
  $\varphi_1,\ldots,\varphi_m$ now satisfy condition (f) in the
  definition of an admissible family, because if, for instance, $J_k=
  J_\zeta^+$, then $\varphi_k = \psi_\zeta^+$ sends $J_\zeta^+$
  bijectively onto an arc of $\T$ and $|\varphi_k| < 1$ on
  $\partial\Omega\setminus J_\zeta^+$.

  The functions $\varphi_k$ satisfy conditions (c)--(e) because the
  functions $\psi_k$ satisfy these conditions, and the functions
  $h_\psi^+, h_\psi^-$ satisfy similar regularity conditions which
  have been given above.  It follows that $\Phi =
  (\varphi_1,\ldots,\varphi_m)$ is admissible.
\end{proof}

\begin{theorem}
  \label{weak-main}
  Let $\Psi: \overline{\Omega} \to \overline{\D}^n$ be a weakly
  admissible function.  Then $\Psi$ is a quasi-uniform test collection
  over $\Omega$.  Moreover, if $\Psi_0 \subset A(\overline{\Omega})$
  is any collection of functions taking $\Omega$ into $\D$ with the
  property that $\Psi \subset \Psi_0$, $\Psi_0 :
  \overline{\Omega}\to\overline{\D}^m$ is injective and $\Psi_0'$ does
  not vanish on $\Omega$, then $\Psi_0$ is a uniform test collection
  over $\Omega$.
\end{theorem}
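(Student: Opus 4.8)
The plan is to bridge from weak admissibility to the admissible case already treated, using Lemma~\ref{passing} as the main tool. First I would apply Lemma~\ref{passing} to obtain an admissible $\Phi=(\varphi_1,\dots,\varphi_m):\overline{\Omega}\to\overline{\D}^m$ with $\varphi_k=(h_{1,k}\circ\psi_1)\cdots(h_{n,k}\circ\psi_n)$, where $h_{j,k}\in A(\overline{\D})$ and $h_{j,k}(\overline{\D})\subset\overline{\D}$; note that $\Phi$ is built out of $\Omega$ and $\Psi$ alone. Now take $T\in\B(H)$ with $\sigma(T)\subset\Omega$ and $\|\psi_j(T)\|\le 1$ for $j=1,\dots,n$. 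Since $\sigma(T)\subset\Omega$ and each $\psi_j$ is holomorphic on $\Omega$ with $\psi_j(\Omega)\subset\D$, the spectral mapping theorem gives $\sigma(\psi_j(T))=\psi_j(\sigma(T))\subset\D$, so von Neumann's inequality for the contraction $\psi_j(T)$ yields $\|h_{j,k}(\psi_j(T))\|\le\sup_{\overline{\D}}|h_{j,k}|\le 1$; by multiplicativity and the composition property of the holomorphic functional calculus one gets $\varphi_k(T)=\prod_j h_{j,k}(\psi_j(T))$, hence $\|\varphi_k(T)\|\le 1$. In particular $\overline{\D}$ is a complete $1$-spectral set for each $\varphi_k(T)$.

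The first assertion is then immediate: by Corollary~\ref{main-corollary}, $\Phi$, being admissible, is a quasi-uniform strong test collection over $\Omega$, so $\overline{\Omega}$ is a complete $K$-spectral set for $T$ with $K$ depending only on $\Omega$, $\Phi$ and $S_\Lambda(T)$, i.e.\ only on $\Omega$, $\Psi$ and $S_\Lambda(T)$. Hence $\Psi$ is a quasi-uniform test collection over $\Omega$.

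For the second assertion the point is to produce a constant \emph{independent of $T$}, which forces us through Lemma~\ref{lemma-finite-codim-Aphi} rather than Lemma~\ref{lemma-finite-codim}. Taking $C_k=\varphi_k(T)$, the composition property of the Cauchy--Riesz calculus shows that the implication that \eqref{eq:C_k0} forces \eqref{eq:C_k} holds, so Lemma~\ref{lemma-0} (with $K'=1$) furnishes a constant $C=C(\Omega,\Psi)$, independent of $T$, such that $\|f(T)\|\le C\|f\|_{A(\overline{\Omega})\otimes M_s}$ for all $s$ and all $f\in(\mathcal{A}_\Phi\cap\Rat(\overline{\Omega}))\otimes M_s$; moreover $\mathcal{A}_\Phi$ is a closed subspace of finite codimension in $A(\overline{\Omega})$ by Theorem~\ref{article1-thm1}. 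Next I would show that $\mathcal{A}_{\Psi_0}$ is dense in $A(\overline{\Omega})$. Each $\varphi_k$ lies in the algebra generated by the $h_{j,k}\circ\psi_j$ with $\psi_j\in\Psi\subset\Psi_0$, and approximating $A(\overline{\D})$-functions uniformly on $\overline{\D}$ by polynomials shows $\mathcal{A}_\Phi\subset\overline{\mathcal{A}_{\Psi_0}}$; hence $\overline{\mathcal{A}_{\Psi_0}}$ is a closed subalgebra of $A(\overline{\Omega})$ of finite codimension which separates the points of $\overline{\Omega}$ (because $\Psi_0$ is injective) and contains at each point of $\Omega$ a function with non-vanishing derivative (because $\Psi_0'$ does not vanish on $\Omega$). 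By the strengthening of Theorem~\ref{article1-thm2} established in \cite{article1}, in which only finite codimension of the generated closed subalgebra is needed in place of admissibility of a generating tuple, such a subalgebra is all of $A(\overline{\Omega})$, so $\mathcal{A}_{\Psi_0}$ is dense and therefore $\mathcal{A}_\Phi+\mathcal{A}_{\Psi_0}=A(\overline{\Omega})$ (a dense subspace containing the closed finite-codimension subspace $\mathcal{A}_\Phi$ must be everything). Since $\|\psi(T)\|\le 1$ for every $\psi\in\Psi_0$, Lemma~\ref{lemma-finite-codim-Aphi}, applied with $X=\mathcal{A}_\Phi$ and the collection $\Psi_0$, then gives that $\overline{\Omega}$ is a complete $K$-spectral set for $T$ with $K$ depending only on $\Omega$, $\Psi$ and $\Psi_0$, not on $T$. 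Hence $\Psi_0$ is a uniform test collection over $\Omega$.

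The step I expect to be most delicate is the identification $\overline{\mathcal{A}_{\Psi_0}}=A(\overline{\Omega})$: this relies on the refinement of Theorem~\ref{article1-thm2} in which admissibility of a generating tuple is replaced by finite codimension of the generated closed subalgebra together with point separation and non-vanishing derivatives. The remaining issues are routine bookkeeping — checking that the rational, Cauchy--Riesz and $A(\overline{\Omega})$ functional calculi agree where they overlap, so that identities such as $\varphi_k(T)=\prod_j h_{j,k}(\psi_j(T))$ and \eqref{eq:C_k} are legitimate, and verifying that the constants produced in the second part are genuinely independent of $T$.
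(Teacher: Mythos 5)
Your proof of the first assertion is essentially the paper's: pass to the admissible $\Phi$ via Lemma~\ref{passing}, observe that each $\varphi_k(T)=\prod_j h_{j,k}(\psi_j(T))$ is a product of contractions (von Neumann plus the Cauchy--Riesz composition rule, legitimate since $\sigma(T)\subset\Omega$), and invoke Corollary~\ref{main-corollary} (the paper goes through Lemma~\ref{lemma-0} directly, which is the same thing). That part is fine.

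For the second assertion there is a genuine gap. Your argument hinges on the claim that $\overline{\mathcal{A}_{\Psi_0}}=A(\overline{\Omega})$, which you justify by appealing to ``the strengthening of Theorem~\ref{article1-thm2} in which only finite codimension of the generated closed subalgebra is needed in place of admissibility, together with point separation and non-vanishing derivatives.'' No such statement is available among the results quoted from \cite{article1} (Theorems~\ref{article1-thm1}, \ref{article1-thm2} and Lemma~\ref{article1-lemma}); the introduction's remark that finite codimension ``sometimes'' suffices is not a citable theorem, and even if such a result holds, its hypotheses would have to be checked on $\overline{\Omega}$ rather than on $\Omega$ --- finite-codimension closed subalgebras of $A(\overline{\Omega})$ can be cut out by derivative conditions and point identifications involving \emph{boundary} points, and you only verify non-vanishing of $\Psi_0'$ in the interior. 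So as written this step does not go through with the tools at hand. The paper avoids the issue entirely by a simpler device: augment $\Phi$ to $\Phi_0$ by adjoining all functions of $\Psi_0\setminus\Phi$ and assigning them \emph{empty} arcs $J_k$, so that conditions (b)--(f) are vacuous for the new components while (a) still holds via the arcs of $\Phi$. Then $\Phi_0$ is admissible, injective, with non-vanishing derivative (these last two because $\Psi_0\subset\Phi_0$), and every $\varphi(T)$, $\varphi\in\Phi_0$, is a contraction; Corollary~\ref{main-corollary} then gives the uniform constant directly. Your overall architecture (Lemma~\ref{lemma-0} to get the estimate on $\mathcal{A}_\Phi$, then Lemma~\ref{lemma-finite-codim-Aphi} with $X=\mathcal{A}_\Phi$) is salvageable, but the density of $\mathcal{A}_{\Psi_0}$ needs to be established by some such concrete route rather than by the unproved algebra-generation principle you invoke.
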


\begin{proof}
  Suppose that $T \in \B(H)$, $\sigma(T) \subset \Omega$ and
  $\psi_k(T)$ are contractions for $k=1,\ldots,n$.  Let $\Phi$ be the
  admissible function obtained from $\Psi$ using Lemma~\ref{passing}.
  Put $C_k = \varphi_k(T)$, $k = 1,\ldots,m$.  Check that $C_k$ is a
  contraction for all $k$.

  Since $\varphi_k = (h_{1,k} \circ\psi_1) \cdot \ldots \cdot (h_{n,k}
  \circ \psi_n)$,
  \begin{equation*}
    C_k = \varphi_k(T) = h_{1,k}(\psi_1(T))\cdot\,\cdots\,\cdot
    h_{n,k}(\psi_n(T)).
  \end{equation*}
  Each $\psi_j(T)$ is a contraction, and
  $\|h_{j,k}\|_{A(\overline{\D})} \leq 1$, so $h_{j,k}(\varphi_j(T))$
  is also a contraction for all $j$ and $k$.  It follows that, being a
  product of contractions, $\psi_k(T)$ is a contraction.

  Because $\sigma(T)\subset \Omega$, the hypotheses of
  Theorem~\ref{main2} are satisfied by the Cauchy-Riesz functional
  calculus for $T$.  Therefore, Lemma~\ref{lemma-0} applies, and so
  $\overline{\Omega}$ is a complete $K$-spectral set for $T$ with $K =
  K(\Omega, \Psi, S_\Lambda(T))$, for an arbitrary pole set $\Lambda$
  for $\Omega$.  In other words, $\Psi$ is a quasi-uniform test
  collection over~$\Omega$.

  Now assume that $\Psi_0$ is as in the statement of the theorem.
  Form an admissible function $\Phi_0$ from $\Phi$ by adding to $\Phi$
  all the functions in $\Psi_0\setminus\Phi$. The arcs $J_k$
  corresponding to functions in $\Psi_0\setminus\Phi$ are defined to
  be equal to the empty set.  Then $\Phi_0$ is injective and $\Phi_0'$
  does not vanish, because $\Psi_0$ already had these properties.
  Therefore, $\Phi_0$ is a uniform strong test collection over
  $\Omega$ by Corollary~\ref{main-corollary}.  If $\psi(T)$ is a
  contraction for every $\psi \in \Psi_0$, then $\varphi(T)$ is also a
  contraction for every $\varphi\in\Phi_0$.  Hence, $\Psi_0$ is a
  uniform test collection over $\Omega$.
\end{proof}

Unfortunately, the methods of the above proof cannot be used to show
that $\Psi$ is a strong test collection over $\Omega$, and we do not
know whether the hypotheses imply it.  If the operators $\psi_k(T)$,
$k = 1,\ldots,n$, are contractions, then it follows that
$\varphi_k(T)$, $k=1,\ldots,m$, is a product of contractions and
therefore a contraction.  However, if $\psi_k(T)$, $k=1,\ldots,n$,
just have $\overline{\D}$ as a complete $K$-spectral set for some $K$,
then we only get that $\varphi_k(T)$ is a product of operators which
have $\overline{\D}$ as a complete $K$-spectral set.  In general, an
operator which is the product of two commuting operators both similar
to contractions need not itself be similar to a contraction;
see~\cite{Pisier}.  Therefore, one cannot prove by this method that
$\varphi_k(T)$ has $\overline{\D}$ as a complete $K'$-spectral set for
some $K'$.

\goodbreak

\begin{corollary}
  \label{inner}
  Let $\Omega$ be a finitely connected domain with analytic boundary
  and let $\psi_1,\ldots,\psi_n : \overline{\Omega} \to \overline{\D}$
  be inner $($i.e., $|\psi_j| = 1$ in $\partial\Omega$ for
  $j=1,\ldots,n)$.  Assume that the restriction of the map
  $\Psi=(\psi_1,\ldots,\psi_n) : \overline{\Omega} \to
  \overline{\D}^n$ to $\partial\Omega$ is injective.  Then $\Psi$ is a
  quasi-uniform test-collection over $\Omega$.  If, moreover, $\Psi$
  is injective in $\overline{\Omega}$ and $\Psi'$ does not vanish in
  $\Omega$, then $\Psi$ is a uniform test collection over $\Omega$.
\end{corollary}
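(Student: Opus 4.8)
The plan is to deduce Corollary~\ref{inner} directly from Theorem~\ref{weak-main} by verifying that $\Psi=(\psi_1,\ldots,\psi_n)$ (or rather a mild normalization of it) is a weakly admissible function on $\overline{\Omega}$. Since $\partial\Omega$ is analytic it is in particular a disjoint finite union of piecewise analytic Jordan curves with no corners, so the domain-side requirements in the definition of weak admissibility hold automatically. As each $\psi_k$ is inner, the relevant set $\Gamma_k=\{\zeta\in\partial\Omega:|\psi_k(\zeta)|=1\}$ is \emph{all} of $\partial\Omega$, so this is the degenerate case of weak admissibility in which the ``arc'' attached to each function is the whole boundary: conditions (a) and (b) are then immediate, and condition (d) is vacuous because $\partial\Omega$, being a finite disjoint union of Jordan curves, has no endpoints relative to itself. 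Before starting I would discard any constant $\psi_k$ from $\Psi$: a unimodular constant carries no information for the injectivity hypothesis on $\partial\Omega$, and enlarging a test collection only strengthens its defining hypothesis, so it is harmless to prove the statement for the (still boundary-injective) subfamily of non-constant $\psi_k$'s and then reinstate the constants at the end; at least one $\psi_k$ is non-constant since otherwise $\Psi|_{\partial\Omega}$ could not be injective.

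Next I would check conditions (c), (f$'$) and (g$'$), which are routine. For (c): since $\partial\Omega$ is analytic and $|\psi_k|\equiv1$ there, the Schwarz reflection principle extends each $\psi_k$ holomorphically to an open neighborhood $\Omega_k$ of $\overline{\Omega}$; shrinking $\Omega_k$ we get $\psi_k\in A(\overline{\Omega_k})$ with $\psi_k'$ Lipschitz, hence H\"older-$\alpha$ on $\Omega_k$ with $\alpha=1$. For (f$'$) and (g$'$): because $|\psi_k|\equiv 1$ on $\partial\Omega$, every right and every left neighborhood of any $\zeta\in\partial\Omega$ lies in $\{|\psi_k|=1\}$, so $\Psi_\zeta^+=\Psi_\zeta^-=\Psi$ for every $\zeta$; thus both conditions reduce to the assertion that for $\zeta\neq z$ in $\partial\Omega$ some $\psi_k$ separates them, which is exactly the hypothesis that $\Psi|_{\partial\Omega}$ is injective.

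The one step needing a genuine argument is condition (e), namely $|\psi_k'|\geq C>0$ on $\partial\Omega$, and I expect this to be the main obstacle. I would argue via the function $u=\log|\psi_k|$: since $\psi_k$ is non-constant, the maximum principle gives $|\psi_k|<1$ in $\Omega$ and $|\psi_k|=1$ on $\partial\Omega$, and $\psi_k$ is zero-free in $\Omega$ near $\partial\Omega$, so $u$ is harmonic there and negative, with $u=0$ on $\partial\Omega$. Smoothness of $\partial\Omega$ gives an interior ball condition, so Hopf's boundary-point lemma yields $\partial_n u(\zeta)>0$ for the outward normal at every $\zeta\in\partial\Omega$; since $u$ vanishes identically on $\partial\Omega$ its tangential derivative there is zero, hence $|\nabla u(\zeta)|=\partial_n u(\zeta)>0$. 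Writing $u=\Re\log\psi_k$ for a local branch of $\log\psi_k$ near $\partial\Omega$ gives $|\nabla u|=|\psi_k'|/|\psi_k|=|\psi_k'|$ on $\partial\Omega$, so $|\psi_k'|>0$ there, and compactness of $\partial\Omega$ produces the uniform constant $C$.

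Having established weak admissibility, Theorem~\ref{weak-main} immediately gives that $\Psi$ is a quasi-uniform test collection over $\Omega$ (the constants reinstated, since a superset of a test collection is one with the same constant). For the last assertion I would invoke the ``moreover'' part of Theorem~\ref{weak-main} with $\Psi_0=\Psi$: the additional hypotheses that $\Psi$ is injective on $\overline{\Omega}$ and $\Psi'$ does not vanish on $\Omega$ are precisely what is demanded of $\Psi_0$ there, so $\Psi=\Psi_0$ is a uniform test collection over $\Omega$.
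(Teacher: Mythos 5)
Your proposal is correct and follows essentially the same route as the paper: verify that $\Psi$ is weakly admissible (with $\Gamma_k=\partial\Omega$, so $\Psi_\zeta^\pm=\Psi$ and (f$'$), (g$'$) reduce to injectivity of $\Psi|\partial\Omega$, while Schwarz reflection gives the analytic continuation across the boundary) and then apply Theorem~\ref{weak-main} with $\Psi_0=\Psi$. The only difference is that you explicitly supply the Hopf-lemma verification of condition (e), $|\psi_k'|\ge C>0$ on $\partial\Omega$, a point the paper's proof leaves implicit; that argument is correct and is a worthwhile detail to record.
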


\begin{proof}
  Since $\psi_1,\ldots,\psi_n$ are inner, $\Psi_\zeta^- = \Psi_\zeta^+
  = \Psi$ for all $\zeta\in\partial\Omega$.  Therefore, the conditions
  (f$'$) and (g$'$) in the definition of a weakly admissible are
  equivalent to the condition that $\Psi|\partial\Omega$ is injective.
  Since $\psi_1,\ldots,\psi_n$ are inner and $\partial\Omega$ is
  analytic, $\psi_1,\ldots,\psi_n$ can be extended analytically across
  $\partial\Omega$.  Hence, $\Psi$ is a weakly admissible function.
  To finish the proof, apply Theorem~\ref{weak-main} with $\Psi_0 =
  \Psi$.
\end{proof}

On a general finitely connected domain $\Omega$ with analytic
boundary, one can always choose three inner functions
$\psi_1,\psi_2,\psi_3$ such that the map
$\Psi=(\psi_1,\psi_2,\psi_3):\overline{\Omega}\to\overline{\D}^3$ is
injective and $\Psi'$ does not vanish in $\Omega$.  Hence, such $\Psi$
is a uniform test collection according to Corollary~\ref{inner}.  See
\cite{Stout}*{Theorem IV.1} and \cite{Fedorov}*{\S{}3} for two
different proofs of the existence of such a $\Psi$.  It is also known
that when $\Omega$ is doubly connected then the same can be done using
only two inner functions $\psi_1,\psi_2$.  However, for a domain
$\Omega$ of connectivity greater or equal than 3, a pair of inner
functions $\psi_1,\psi_2$ will never be enough under the constraint
that $\Psi$ is injective (see \cites{Rudin,Fedorov}).

\begin{bibdiv}
  \begin{biblist}

\bib{Agler85}{article}{
   author={Agler, Jim},
   title={Rational dilation on an annulus},
   journal={Ann. of Math. (2)},
   volume={121},
   date={1985},
   number={3},
   pages={537--563},
   issn={0003-486X},
}

\bib{AHR}{article}{
   author={Agler, Jim},
   author={Harland, John},
   author={Raphael, Benjamin J.},
   title={Classical function theory, operator dilation theory, and
     machine computation on multiply-connected domains},
   journal={Mem. Amer. Math. Soc.},
   volume={191},
   date={2008},
   number={892},
   pages={viii+159},
   issn={0065-9266},
}

\bib{Arveson}{article}{
   author={Arveson, William B.},
   title={Subalgebras of $C^{\ast} $-algebras},
   journal={Acta Math.},
   volume={123},
   date={1969},
   pages={141--224},
   issn={0001-5962},
}

\bib{BadeaBech}{collection}{
   author={Badea, Catalin},
   author={Beckermann, Bernhard},
   title={Spectral sets},
   series={Handbook of linear algebra, Discrete Mathematics and its
     Applications (Boca Raton)},
   note={L. Hogben, ed. Second edition},
   publisher={CRC Press, Boca Raton, FL},
   date={2014},
   pages={xxx+1874},
   isbn={978-1-4665-0728-9},
}

\bib{BadeaBeckermannCrouzeix}{article}{
   author={Badea, Catalin},
   author={Beckermann, Bernhard},
   author={Crouzeix, Michel},
   title={Intersections of several disks of the Riemann sphere as
   $K$-spectral sets},
   journal={Commun. Pure Appl. Anal.},
   volume={8},
   date={2009},
   number={1},
   pages={37--54},
   issn={1534-0392},
}

\bib{BertrandGong}{article}{
   author={Bertrand, Florian},
   author={Gong, Xianghong},
   title={Dirichlet and Neumann problems for planar domains with
     parameter},
   journal={Trans. Amer. Math. Soc.},
   volume={366},
   date={2014},
   number={1},
   pages={159--217},
   issn={0002-9947},
}

\bib{Conway}{book}{
   author={Conway, John B.},
   title={The theory of subnormal operators},
   series={Mathematical Surveys and Monographs},
   volume={36},
   publisher={American Mathematical Society, Providence, RI},
   date={1991},
   pages={xvi+436},
   isbn={0-8218-1536-9},
}

\bib{article1}{article}{
   author={Dritschel, Michael},
   author={Est\'evez, Daniel},
   author={Yakubovich, Dmitry},
   title={Traces of analytic uniform algebras on subvarieties and test
     collections},
   status={to appear in J. London Math. Soc.},
    eprint={arXiv:1505.01838},
   doi={10.1112/jlms.12003},
}

\bib{clancey}{article}{
   author={Clancey, Kevin F.},
   title={Examples of nonnormal seminormal operators whose spectra are
     not spectral sets},
   journal={Proc. Amer. Math. Soc.},
   volume={24},
   date={1970},
   pages={797--800},
   issn={0002-9939},
}

\bib{Crouzeix}{article}{
   author={Crouzeix, Michel},
   title={Numerical range and functional calculus in Hilbert space},
   journal={J. Funct. Anal.},
   volume={244},
   date={2007},
   number={2},
   pages={668--690},
   issn={0022-1236},
}

\bib{CrouzeixPalencia}{article}{
   author={Crouzeix, Michel},
   author={Palencia, C\'esar},
   title={The numerical range as a spectral set},
   eprint={arXiv:1702.00668},
   date={2017},
}

\bib{DritschelMcCullough05}{article}{
   author={Dritschel, Michael A.},
    author={McCullough, Scott},
   title={The failure of rational dilation on a triply connected
     domain},
   journal={J. Amer. Math. Soc.},
   volume={18},
   date={2005},
   number={4},
   pages={873--918},
   issn={0894-0347},
}

\bib{DritschelMcCullough}{article}{
   author={Dritschel, Michael A.},
   author={McCullough, Scott},
   title={Test functions, kernels, realizations and interpolation},
   conference={
      title={Operator theory, structured matrices, and dilations},
   },
   book={
      series={Theta Ser. Adv. Math.},
      volume={7},
      publisher={Theta, Bucharest},
   },
   date={2007},
   pages={153--179},
}

\bib{DelyonDelyon}{article}{
   author={Delyon, Bernard},
   author={Delyon, Fran{\c{c}}ois},
   title={Generalization of von Neumann's spectral sets and integral
   representation of operators},
   language={English, with English and French summaries},
   journal={Bull. Soc. Math. France},
   volume={127},
   date={1999},
   number={1},
   pages={25--41},
   issn={0037-9484},
}

\bib{DouglasPaulsen}{article}{
   author={Douglas, R. G.},
   author={Paulsen, V. I.},
   title={Completely bounded maps and hypo-Dirichlet algebras},
   journal={Acta Sci. Math. (Szeged)},
   volume={50},
   date={1986},
   number={1-2},
   pages={143--157},
   issn={0001-6969},
}

\bib{Fedorov}{article}{
   author={Fedorov, S. I.},
   title={Harmonic analysis in a multiply connected domain. I},
   language={Russian},
   journal={Mat. Sb.},
   volume={181},
   date={1990},
   number={6},
   pages={833--864},
   issn={0368-8666},
   translation={
      journal={Math. USSR-Sb.},
      volume={70},
      date={1991},
      number={1},
      pages={263--296},
      issn={0025-5734},
   },
}

\bib{Hartman}{article}{
   author={Hartman, Jim},
   title={A hyponormal weighted shift whose spectrum is not a spectral
     set},
   journal={J. Operator Theory},
   volume={8},
   date={1982},
   number={2},
   pages={401--403},
   issn={0379-4024},
}

\bib{HavinNersessian}{article}{
   author={Havin, V. P.},
   author={Nersessian, A. H.},
   title={Bounded separation of singularities of analytic functions},
   conference={
      title={Entire functions in modern analysis},
      address={Tel-Aviv},
      date={1997},
   },
   book={
      series={Israel Math. Conf. Proc.},
      volume={15},
      publisher={Bar-Ilan Univ.},
      place={Ramat Gan},
   },
   date={2001},
   pages={149--171},
}

\bib{HavinNersCerda}{article}{
  author={Havin, V. P.},
  author={Ortega-Cerd{\`a}, J.},
  title={Uniform estimates in the Poincar\'e-Aronszajn theorem on
    the separation of singularities of analytic functions},
  journal={J. Anal. Math.},
  volume={101},
  date={2007},
  pages={65--93},
  issn={0021-7670},
}

\bib{KazasKelley}{article}{
   author={Kazas, Angeliki},
   author={Kelley, Amy},
   title={Clark measures and operators similar to a contraction},
   journal={Complex Var. Elliptic Equ.},
   volume={51},
   date={2006},
   number={1},
   pages={9--31},
   issn={1747-6933},
}

\bib{Lewis}{article}{
   author={Lewis, Keith A.},
   title={Intersections of $K$-spectral sets},
   journal={J. Operator Theory},
   volume={24},
   date={1990},
   number={1},
   pages={129--135},
   issn={0379-4024},
}

\bib{Loebl}{article}{
   author={Loebl, Richard I.},
   title={Contractive linear maps on $C\sp*$-algebras},
   journal={Michigan Math. J.},
   volume={22},
   date={1975},
   number={4},
   pages={361--366 (1976)},
   issn={0026-2285},
}

\bib{MartPutinar}{book}{
   author={Martin, Mircea},
   author={Putinar, Mihai},
   title={Lectures on hyponormal operators},
   series={Operator Theory: Advances and Applications},
   volume={39},
   publisher={Birkh\"auser Verlag, Basel},
   date={1989},
   pages={304},
   isbn={3-7643-2329-9},
}

\bib{Mascioni}{article}{
   author={Mascioni, Vania},
   title={Ideals of the disc algebra, operators related to Hilbert
     space contractions and complete boundedness},
   journal={Houston J. Math.},
   volume={20},
   date={1994},
   number={2},
   pages={299--311},
   issn={0362-1588},
}

\bib{McCarthyPutinar}{article}{
   author={McCarthy, John E.},
   author={Putinar, Mihai},
   title={Positivity aspects of the Fantappi\`e transform},
   journal={J. Anal. Math.},
   volume={97},
   date={2005},
   pages={57--82},
   issn={0021-7670},
}

\bib{NagyFoias}{book}{
   author={Sz.-Nagy, B{\'e}la},
   author={Foias, Ciprian},
   author={Bercovici, Hari},
   author={K{\'e}rchy, L{\'a}szl{\'o}},
   title={Harmonic analysis of operators on Hilbert space},
   series={Universitext},
   edition={2},
   edition={Revised and enlarged edition},
   publisher={Springer, New York},
   date={2010},
   pages={xiv+474},
   isbn={978-1-4419-6093-1},
}

\bib{Nevanlinna}{article}{
   author={Nevanlinna, Olavi},
   title={Lemniscates and $K$-spectral sets},
   journal={J. Funct. Anal.},
   volume={262},
   date={2012},
   number={4},
   pages={1728--1741},
   issn={0022-1236},
}

\bib{OkuboAndo}{article}{
   author={Okubo, Kazuyoshi},
   author={Ando, Tsuyoshi},
   title={Constants related to operators of class $C_{\rho }$},
   journal={Manuscripta Math.},
   volume={16},
   date={1975},
   number={4},
   pages={385--394},
   issn={0025-2611},
}

\bib{Paulsen}{book}{
   author={Paulsen, Vern},
   title={Completely bounded maps and operator algebras},
   series={Cambridge Studies in Advanced Mathematics},
   volume={78},
   publisher={Cambridge University Press, Cambridge},
   date={2002},
   pages={xii+300},
   isbn={0-521-81669-6},
}

\bib{Pickering}{article}{
   author={Pickering, James},
   title={Counterexamples to rational dilation on symmetric multiply
   connected domains},
   journal={Complex Anal. Oper. Theory},
   volume={4},
   date={2010},
   number={1},
   pages={55--95},
   issn={1661-8254},
}

\bib{Pisier}{article}{
   author={Pisier, Gilles},
   title={Joint similarity problems and the generation of operator
     algebras with bounded length},
   journal={Integral Equations Operator Theory},
   volume={31},
   date={1998},
   number={3},
   pages={353--370},
   issn={0378-620X},
}

\bib{Pommerenke}{book}{
   author={Pommerenke, Ch.},
   title={Boundary behaviour of conformal maps},
   series={Grundlehren der Mathematischen Wissenschaften [Fundamental
   Principles of Mathematical Sciences]},
   volume={299},
   publisher={Springer-Verlag, Berlin},
   date={1992},
   pages={x+300},
   isbn={3-540-54751-7},
}

\bib{Putinar-subsc}{article}{
   author={Putinar, Mihai},
   title={Hyponormal operators are subscalar},
   journal={J. Operator Theory},
   volume={12},
   date={1984},
   number={2},
   pages={385--395},
   issn={0379-4024},
}

\bib{Putinar-BanCenter38}{article}{
   author={Putinar, Mihai},
   title={Generalized eigenfunction expansions and spectral
     decompositions},
   conference={
      title={Linear operators},
      address={Warsaw},
      date={1994},
   },
   book={
      series={Banach Center Publ.},
      volume={38},
      publisher={Polish Acad. Sci., Warsaw},
   },
   date={1997},
   pages={265--286},
}

\bib{PutinarSandb}{article}{
   author={Putinar, Mihai},
   author={Sandberg, Sebastian},
   title={A skew normal dilation on the numerical range of an
     operator},
   journal={Math. Ann.},
   volume={331},
   date={2005},
   number={2},
   pages={345--357},
   issn={0025-5831},
}

\bib{Rota}{article}{
   author={Rota, Gian-Carlo},
   title={On models for linear operators},
   journal={Comm. Pure Appl. Math.},
   volume={13},
   date={1960},
   pages={469--472},
   issn={0010-3640},
}

\bib{Rudin}{article}{
   author={Rudin, Walter},
   title={Pairs of inner functions on finite Riemann surfaces},
   journal={Trans. Amer. Math. Soc.},
   volume={140},
   date={1969},
   pages={423--434},
   issn={0002-9947},
}

\bib{Rudin-book}{book}{
   author={Rudin, Walter},
   title={Real and complex analysis},
   edition={3},
   publisher={McGraw-Hill Book Co., New York},
   date={1987},
   pages={xiv+416},
   isbn={0-07-054234-1},
}

\bib{Stampfli}{article}{
   author={Stampfli, Joseph G.},
   title={Surgery on spectral sets},
   journal={J. Operator Theory},
   volume={16},
   date={1986},
   number={2},
   pages={235--243},
   issn={0379-4024},
}

\bib{Stampfli2}{article}{
   author={Stampfli, Joseph G.},
   title={Surgery on spectral sets. II.\ The multiply connected case},
   journal={Integral Equations Operator Theory},
   volume={13},
   date={1990},
   number={3},
   pages={421--432},
   issn={0378-620X},
}

\bib{Stampfli-Wadhwa}{article}{
   author={Stampfli, Joseph G.},
   author={Wadhwa, Bhushan L.},
   title={An asymmetric Putnam-Fuglede theorem for dominant operators},
   journal={Indiana Univ. Math. J.},
   volume={25},
   date={1976},
   number={4},
   pages={359--365},
   issn={0022-2518},
}

\bib{Stessin}{article}{
   author={Stessin, M. I.},
   title={Wold decomposition of the Hardy space and Blaschke products
   similar to a contraction},
   journal={Colloq. Math.},
   volume={81},
   date={1999},
   number={2},
   pages={271--284},
   issn={0010-1354},
}

\bib{Stout}{article}{
   author={Stout, E. L.},
   title={On some algebras of analytic functions on finite open Riemann
   surfaces},
   journal={Math. Z.},
   volume={92},
   date={1966},
   pages={366--379},
   issn={0025-5874},
}

\bib{SzNagy}{article}{
   author={de Sz. Nagy, B{\'e}la},
   title={On uniformly bounded linear transformations in Hilbert space},
   journal={Acta Univ. Szeged. Sect. Sci. Math.},
   volume={11},
   date={1947},
   pages={152--157},
}

\bib{wadhwa}{article}{
   author={Wadhwa, Bhushan L.},
   title={A hyponormal operator whose spectrum is not a spectral set},
   journal={Proc. Amer. Math. Soc.},
   volume={38},
   date={1973},
   pages={83--85},
   issn={0002-9939},
}

\bib{Yakub}{article}{
   author={Yakubovich, D. V.},
   title={A linearly similar Sz.-Nagy-Foias model in a domain},
   language={Russian},
   journal={Algebra i Analiz},
   volume={15},
   date={2003},
   number={2},
   pages={190--237},
   issn={0234-0852},
   translation={
      journal={St. Petersburg Math. J.},
      volume={15},
      date={2004},
      number={2},
      pages={289--321},
      issn={1061-0022},
   },
}

\end{biblist}
\end{bibdiv}

\end{document}